\newtheorem{theorem}{Theorem}[section]
\newtheorem{lemma}[theorem]{Lemma}
\newtheorem{proposition}[theorem]{Proposition}
\newtheorem{corollary}[theorem]{Corollary}
\newtheorem{remark}[theorem]{Remark}
\newcommand{\mc}[1]{{\mathcal #1}}
\newcommand{\mf}[1]{{\mathfrak #1}}
\newcommand{\mb}[1]{{\mathbf #1}}
\newcommand{\bb}[1]{{\mathbb #1}}
\newcommand{\bs}[1]{{\boldsymbol #1}}
\newcommand{\ms}[1]{{\mathscr #1}}
\newcounter{as}[section]
\newtheorem{asser}[as]{Assertion}
\renewcommand{\Cap}{{\rm cap}}
\begin{document}

\title[]{Metastability of reversible random walks in potential fields}

\author{C. Landim, R. Misturini, K. Tsunoda}

\address{\noindent IMPA, Estrada Dona Castorina 110, CEP 22460 Rio de
  Janeiro, Brasil and CNRS UMR 6085, Universit\'e de Rouen, Avenue de
  l'Universit\'e, BP.12, Technop\^ole du Madril\-let, F76801
  Saint-\'Etienne-du-Rouvray, France.  \newline e-mail: \rm
  \texttt{landim@impa.br} }

\address{\noindent IMPA, Estrada Dona Castorina 110, CEP 22460 Rio de
  Janeiro, Brasil.  \newline e-mail: \rm \texttt{misturini@impa.br} }

\address{\noindent Graduate School of Mathematical Sciences, The
  University of Tokyo, Komaba, Tokyo 153-8914, Japan.  \newline e-mail:
  \rm \texttt{tsunoda@ms.u-tokyo.ac.jp}}

\keywords{Reversible random walks, Metastability, Exit points} 

\begin{abstract}
  Let $\Xi$ be an open and bounded subset of $\bb R^d$, and let
  $F:\Xi\to\bb R$ be a twice continuously differentiable function.
  Denote by $\Xi_N$ the discretization of $\Xi$, $\Xi_N = \Xi \cap
  (N^{-1} \bb Z^d)$, and denote by $X_N(t)$ the continuous-time,
  nearest-neighbor, random walk on $\Xi_N$ which jumps from $\bs x$ to
  $\bs y$ at rate $ e^{-(1/2) N [F(\bs y) - F(\bs x)]}$. We examine in
  this article the metastable behavior of $X_N(t)$ among the wells of
  the potential $F$.
\end{abstract}

\maketitle

\section{Introduction}
\label{vsec04}

We introduced recently in \cite{bl2, bl7} an approach to prove the
metastable behavior of Markov chains which has been successfully
applied in several different contexts. We refer to \cite{bl9, l2} for
a description of the method and for examples of Markov chains whose
metastable behavior has been established with this approach.

We examine in this article the metastable behavior of reversible
random walks in force fields. This is an old problem whose origin can
be traced back at least to Kramers \cite{kra1}. It has been adressed
by Freidlin and Wentsell \cite{fw1} and by Galves, Olivieri and Vares
\cite{gov1} in the context of small random perturbations of dynamical
systems, and, more recently, by Bovier, Eckhoff, Gayrard and Klein in
a series of papers \cite{begk1, begk2, begk3, bgk1} through the
potential theoretic approach. This problem has raised interest and has
found applications in many areas, as computer sciences \cite{ce1} and
chemical physics \cite{nwpp1}.

The first main result of this article, Theorem \ref{vs07}, states that
starting from a neighborhood of a local minimum of the force field, in
an appropriate time-scale, the evolution of the random walk can be
described by a reversible Markov chain in a finite graph, in which the
vertices represent the wells of the force field and the edges the
saddle points.

More precisely, denote by $X_N(t)$ a reversible random walk evolving
in a discretization of a bounded domain $\Xi\subset \bb R^d$ according
to a force field $F:\Xi \to \bb R$. A precise definition of the
dynamics is given below in \eqref{v54}. Let $\bs x_1, \dots, \bs x_L$
be the local minima of the field $F$, and let $Y_N(t)$ be the process
which records the minima visited: $Y_N(t)$ is equal to $j$ if the
chain $X_N(t)$ belongs to a neighborhood of $\bs x_j$, $1\le j\le L$,
and $0$ otherwise. Clearly, $Y_N(t)$ is not Markovian. Theorem
\ref{vs07} asserts that starting from a neighborhood of a local
minimum $\bs x_j$, there exists a time scale $\beta_N$, which depends
on $j$, in which $Y_N(t\beta_N)$ converges in some topology to a
Markovian dynamics whose state space is a subset of $\{1, \dots,
L\}$. This asymptotic dynamics may have absorbing points, and its jump
rates depend solely on the behavior of the potential in the
neighborhoods of the local minima and in the neighborhoods of the
saddle points. Theorem \ref{vs07} is similar in spirit to the one of
No\'e, Wu, Prinz and Plattner \cite{nwpp1}, who proved that projected
metastable Markovian dynamics can be well approximated by hidden
Markovian dynamics.

The second main result, Theorem \ref{vs20}, adresses the problem of
the exit points from a domain. Consider a local minimum $\bs x_j$ of
the force field and denote by $\{\bs z_1, \dots, \bs z_K\}$ the lowest
saddle points of $F$ which separate $\bs x_j$ from the other local
minima. Theorem \ref{vs20} provides the asymptotic probabilities that
the chain $X_N(t)$ will traverse a mesoscopic neighborhood of a saddle
point $\bs z_i$ before hitting another local minima of the force
field.

We explained already in \cite{bl9} the main differences between our
approach and the potential theoretic one \cite{begk1, begk2}, and
between our approach and the pathwise one due to Cassandro, Galves,
Olivieri and Vares \cite{cgov1}. We will not repeat this exposition
here.  Our approach does not aim to characterize the typical paths in
a transition between two metastable states, in contrast with the
transition path theory \cite{ee1}. Nevertheless, in the case where the
number of wells is small, as in the examples presented in \cite{mse1},
Theorems \ref{vs07} and \ref{vs20} describe the distribution of the
transition paths, at least at the scale of the metastable sets, by
indicating the sequence of metastable sets visited in a transition
between two metastable sets.

In the case of complex networks, the Lennard--Jones clusters analyzed
in \cite{ce1} for instance, to give a rough view of the transition
paths from two metastable states, we may proceed in two ways. One
possibility is to reduce the number of nodes by considering the trace
of the original chain on a subset of the state space
(cf. \cite[Section 6.1]{bl2} for the definition of trace
processes). Avena and Gaudilli\`ere \cite{ag1} proposed a natural
algorithm to reduce the number of vertices of a chain.  The algorithm
produces a subset $V$ with the property that the mean hitting time of
$V$ does not depend on the starting point. In this sense the vertices
of $V$ are ``uniformly'' distributed among the set of nodes. The
algorithm can also be calibrated to provide a large or small set of
nodes $V$.

Another possibility is to identify certain nodes, losing the Markov
property, and to apply Theorem \ref{vs07} below to approximate this
new dynamics by a Markovian dynamics.  To describe the transition
paths at this level of accuracy, one can compute for these reduced
dynamics the equilibrium potential between two metastable sets (the
committor in the terminology of \cite{ce1}), and the optimal flow for
Thomson's principle (the probability current of reactive
trajectories).

In both cases, the selection of the set of nodes or the selection of
nodes to be merged have to be carried out judiciously, to reduce as
much as possible the number of nodes without losing the essential
features of the original chain.  From a computational point of view,
the jump rates of trace process are easily calculated, while the jump
rates of projected processes are more difficult to derive. In the
first case, it suffices to apply recursively the first displayed
equation below the proof of Corollary 6.2 in \cite{bl2}, while in the
second case, one has to calculate the capacities between the
metastable sets.

\section{Notation and Results}
\label{vsec00}

Let $\Xi$ be an open and bounded subset of $\bb R^d$, and denote by
$\partial \,\Xi$ its boundary, which is assumed to be a smooth
manifold. Fix a twice continuously differentiable function
$F:\Xi\cup \partial \,\Xi\to\bb R$, with a finite number of critical
points, satisfying the following assumptions:

\begin{enumerate}
\item[(H1)] The second partial derivatives of $F$ are Lipschitz
  continuous. Denote by $C_1$ the Lipschitz constant;
\item[(H2)] All the eigenvalues of the Hessian of $F$ at the critical
  points which are local minima are \emph{strictly} positive.
\item[(H3)] The Hessian of $F$ at the critical points which are not
  local minima or local maxima has one strictly negative eigenvalue,
  all the other ones being strictly positive. In dimension $1$ this
  assumption requires the second derivative of $F$ at the local minima
  to be strictly negative.
\item[(H4)] For every $\bs x\in \partial\, \Xi$, $(\nabla F)(\bs x)
  \cdot \bs n (\bs x) <0$, where $\bs n (\bs x)$ represents the
  exterior normal to the boundary of $\Xi$, and $\bs x\cdot\bs y$ the
  scalar product of $\bs x$\, $\bs y\in\bb R^d$. This hypothesis
  guarantees that $F$ has no local minima at the boundary of $\Xi$.
\end{enumerate}

Denote by $\Xi_N$ the discretization of $\Xi$: $\Xi_N = \Xi \cap
(N^{-1} \bb Z^d)$, $N\ge 1$, where $N^{-1} \bb Z^d = \{\bs k/N : \bs
k\in \bb Z^d\}$. The elements of $\Xi_N$ are represented by the
symbols $\bs x=(\bs x_1, \dots, \bs x_d)$, $\bs y$ and $\bs z$.  Let
$\mu_N$ be the probability measure on $\Xi_N$ defined by
\begin{equation*}
\mu_N(\bs x) \;=\; \frac 1{Z_N} e^{-N F(\bs x)}\;,\quad\bs x\in
\Xi_N\;, 
\end{equation*}
where $Z_N$ is the partition function $Z_N = \sum_{\bs x\in \Xi_N}
\exp\{-N F(\bs x)\}$.  Let $\{X_N(t) : t\ge 0\}$ be the continuous-time
Markov chain on $\Xi_N$ whose generator $L_N$ is given by
\begin{equation}
\label{v54}
(L_N f)(\bs x) \;=\; \sum_{\substack{\bs y \in\Xi_N\\
\Vert\bs y - \bs x \Vert = 1/N}}  e^{-(1/2) N [F(\bs y) - F(\bs x)]} 
\, [f(\bs y) - f(\bs x)]\;,
\end{equation}
where $\Vert \,\cdot\,\Vert$ represents the Euclidean norm of $\bb
R^d$.  The rates were chosen for the measure $\mu_N$ to be reversible
for the dynamics. Denote by $R_N(\bs x, \bs y)$, $\lambda_N(\bs x)$,
$\bs x$, $\bs y\in\Xi_N$, the jump rates, holding rates of the
chain $X_N(t)$, respectively:
\begin{equation*}
\begin{split}
& R_N(\bs x, \bs y) \;=\;
\begin{cases}
e^{-(1/2) N [F(\bs y) - F(\bs x)]} & \Vert\bs y - \bs x \Vert = 1/N\;,\,
\bs x \,,\, \bs y \in \Xi_N\;,
\\
0 & \text{otherwise}\;.
\end{cases}
\\
& \qquad \lambda_N(\bs x) \;=\; \sum_{\substack{\bs y \in\Xi_N\\
\Vert\bs y - \bs x \Vert = 1/N}}  R_N(\bs x, \bs y) \;.
\end{split}
\end{equation*}

Denote by $D(\bb R_+, \Xi_N)$ the space of right-continuous
trajectories $f: \bb R_+ \to \Xi_N$ with left-limits, endowed with
the Skorohod topology. Let $\mb P_{\bs x}= \mb P^N_{\bs x}$, $\bs x\in
\Xi_N$, be the measure on $D(\bb R_+, \Xi_N)$ induced by the
chain $X_N(t)$ starting from $\bs x$. Expectation with respect to $\mb
P_{\bs x}$ is denoted by $\mb E_{\bs x}$.

For a subset $A$ of $\Xi_N$, denote by $H_A$ (resp. $H^+_A$) the
hitting time of (resp. return time to) the set $A$:
\begin{equation*}
\begin{split}
& H_{A} \;:=\;\inf\{ t> 0 : X_N(t) \in A \}\;, \\
& \quad H^+_A \,:=\, \inf\{ t>0 : X_N(t) \in A \,,\,
X_N(s) \not= X_N(0) \;\;\textrm{for some $0< s < t$}\}\,.
\end{split}
\end{equation*}
The capacity between two disjoint sets $A$, $B$ of $\Xi_N$, denoted
by $\Cap_N(A,B)$, is given by
\begin{equation*}
\Cap_N(A,B) \;=\; \sum_{\bs x\in A} \mu_N(\bs x) \, \lambda_N(\bs x)
\,  \mb P_{\bs x} \big[ H_B < H^+_A \big]\;.
\end{equation*}

\smallskip\noindent{\bf A. The wells and their capacities.}  Denote by
$\mf M$ the set of local minima and by $\mf S$ the set of saddle
points of $F$ in $\Xi$.  Let $\mf S_1$ be the set of the lowest saddle
points:
\begin{equation*}
\mf S_1 \;=\; \Big\{ \bs z\in \mf S : F (\bs z) = \min\{ F (\bs y) :
\bs y\in \mf S\}\, \Big\}\;.
\end{equation*}
We represent by $\bs z^{1,1}, \dots , \bs z^{1, n_1}$ the elements of
$\mf S_1$, $\mf S_1 = \{ \bs z^{1,1}, \dots , \bs z^{1,n_1}
\}$. Starting from $\mf S_1$, we define inductively a finite sequence
of disjoint subsets of $\mf S$. Assume that $\mf S_1, \dots, \mf S_i$
have been defined, let $\mf S^+_i = \mf S_1 \cup \cdots \cup \mf S_i$,
and let
\begin{equation*}
\mf S_{i+1} \;=\; \Big\{ \bs z\in \mf S : F (\bs z) = \min\{ F (\bs y) :
\bs y\in \mf S \setminus \mf S^+_i \}\, \Big\}\;.
\end{equation*}
We denote by $\bs z^{i,j}$, $1\le j\le n_i$ the elements of $\mf S_i$.
We obtain in this way a partition $\{\mf S_i : 1\le i\le i_0\}$ of
$\mf S$.

We will refer to the index $i$ as the level of a saddle point.  Denote
by $H_i$ the height of the saddle points in $\mf S_i$:
\begin{equation*}
H_i\;=\; F(\bs z^{i,1}) \;, \quad 1\le i\le i_0\;,
\end{equation*}
so that $H_1< H_2 < \dots < H_{i_0}$.

For each $1\le i\le i_0$, let $\widehat{\varOmega}^i$ be the subset of
$\Xi$ defined by
\begin{equation*}
\widehat{\varOmega}^i \;=\; \big\{\bs x\in \Xi : F(\bs x) 
\le F(\bs z^{i,1}) \big\} \;.
\end{equation*}
By definition, $\widehat{\varOmega}^i \subset
\widehat{\varOmega}^{i+1}$.  The set $\widehat{\varOmega}^i$ can be
written as a disjoint union of connected components:
$\widehat{\varOmega}^i = \cup_{1\le j\le \varkappa_i}
\widehat{\varOmega}^i_j$, where $\widehat{\varOmega}^i_j \cap
\widehat{\varOmega}^i_k = \varnothing$, $j\not = k$, and where each
set $\widehat{\varOmega}^i_j$ is connected. Some connected component
may not contain any saddle point in $\mf S_i$, and some may contain more
than one saddle point. Denote by $\varOmega^i_j$, $1\le j\le \ell_i$,
the connected components $\widehat{\varOmega}^i_{j'}$ which contain a
point in $\mf S_i$, and let $\varOmega^i = \cup_{1\le j\le \ell_i}
\varOmega^i_j$.  Clearly, the number of components $\varOmega^i_j$ is
smaller than the number of elements of $\mf S_i$, $\ell_i\le n_i$.

Each component $\varOmega^i_j$ is a union of wells, $\varOmega^i_j =
W^i_{j,1} \cup \cdots \cup W^i_{j,\ell^i_j}$. The sets $W^i_{j,a}$ are
defined as follows. Let $\mathring{\varOmega}^i_j$ be the interior of
$\varOmega^i_j$. Each set $W^i_{j,a}$ is the closure of a connected
component of $\mathring{\varOmega}^i_j$.  The intersection of two
wells is a subset of the set of saddle points: $W^i_{j,a} \cap
W^i_{j,b} \subset \mf S_i$. Figure \ref{fig1} illustrates the wells of
two connected components of some level. The sets $W^\epsilon_a$ are
introduced just before \eqref{v16}.

\begin{figure}[htb]
  \centering
  \def\svgwidth{350pt}
  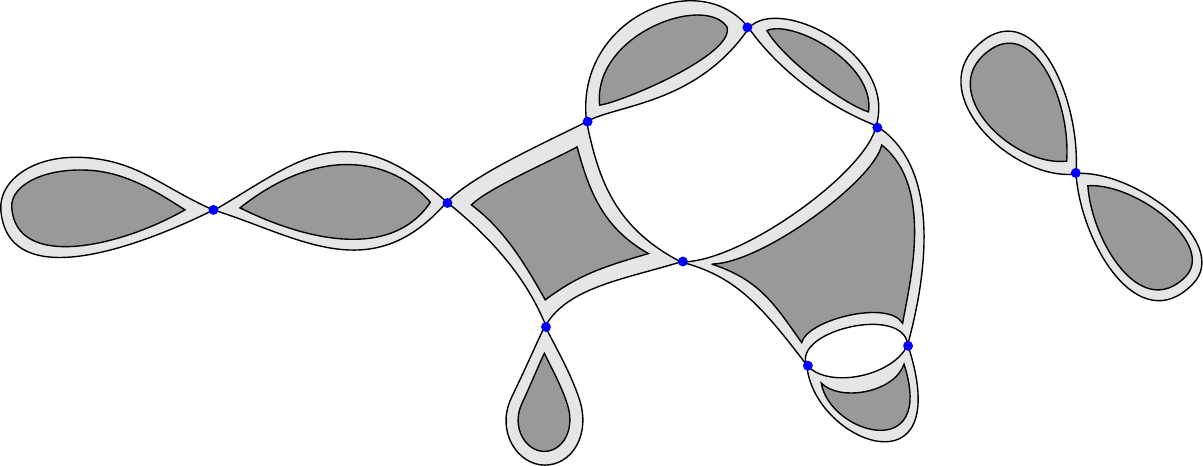
  \caption{Some wells which form two connected components $\Omega^i_1$
    and $\Omega^i_2$.}
\label{fig1}
\end{figure}

Fix $1\le i \le i_0$ and $1\le j\le \ell_i$ and a connected component
$\varOmega = \varOmega^i_j$. To avoid heavy notation, unless when
strictly required, we omit from now on the dependence of the sets $\mf
S_i$, $W^i_{j,a}$ and the numbers $\ell^i_j$ on the indices $i$ and $j$
which are fixed.

Let $S = \{1, \dots, \ell\}$ denote the set of the indices of the
wells forming the connected component $\varOmega$.  For $a\not = b\in
S$, denote by $\mf S_{a,b}$ the set of saddle points separating $W_a$
from $W_b$,
\begin{equation*}
\mf S_{a,b} \;=\; \big\{\bs z \in \mf S : \bs z\in W_a \cap W_b\big\}\;,
\end{equation*}
and denote by $\mf S(A)$, $A\subset S$, the set of saddle points
separating $\cup_{a\in A} W_a$ from $\cup_{a\in A^c} W_a$:
\begin{equation*}
\mf S(A) \;=\; \bigcup_{a \in A \,,\, b \in A^c} \mf S_{a,b} \;.
\end{equation*}
For a saddle point $\bs z \in \mf S$, denote by $- \mu(\bs z)$ the
unique negative eigenvalue of the Hessian of $F$ at $\bs z$.

Recall that $F(\bs z) = H_i$, $\bs z\in \mf S=\mf S_i$.  For
$0<\epsilon< H_i - H_{i-1}$, $1\le a\le \ell$, let $W^\epsilon_a =
\{\bs x \in W_a : F(\bs x) < H_i - \epsilon\}$, and let
\begin{equation}
\label{v16}
\ms E^a_N \;=\; W^\epsilon_a \,\cap\, \Xi_N\;,\;\; 1\le a\le
\ell\,, \quad \ms E_N (A) = \bigcup_{a\in A} \ms E^a_N \;,\;\;
A\subset S\;.
\end{equation}
Each well $W^1_{j,a}$ contains exactly one local minimum of $F$, while
the wells $W^i_{j,a}$, $1<i\le i_0$, may contain more than one local
minimum.  Denote by $\{\bs m_{a,1}, \dots, \bs m_{a,q}\}$, $q=q_{a}$,
the deepest local minima of $F$ which belong to $W^i_{j,a}$:
\begin{equation*}
\{\bs m_{a,1}, \dots, \bs m_{a,q}\}  \;=\; \Big\{\bs y\in W_a \cap \mf M : 
F(\bs y) = \min\{F(\bs y') : \bs y'\in  W_{a} \cap \mf M\}\,\Big\}\;.
\end{equation*}
Let $h_a = F(\bs m_{a,1})$ and let
\begin{equation*}
\bs \mu (a) \;=\; \sum_{k=1}^{q_{a}} 
\frac{1}{\sqrt{\det {\rm Hess}\, F(\bs m_{a, k})}} \;,\;\; a\in S\;,
\end{equation*}
where ${\rm Hess}\, F(\bs x)$ represents the Hessian of $F$ calculated
at $\bs x$, and $\det {\rm Hess}\, F(\bs x)$ its determinant.  A
calculation, presented in \eqref{v06}, shows that for each $a\in S$,
\begin{equation}
\label{v24}
\mu_N(\ms E^a_N) \;=\; [1+o_N(1)]\, \frac{(2\pi N)^{d/2}}{Z_N}\,
e^{- N h_a} \, \bs \mu (a) \;. 
\end{equation}

The next result and Theorem \ref{vs04} below are discrete versions of
a result of Bovier, Eckhoff, Gayrard and Klein \cite{begk3}. The proofs
are based on the proof of Theorem 3.1 in \cite{begk3} and on
\cite{bbi1, bbi2}.

\begin{theorem}
\label{vs05}
For every proper subset $A$ of $S$,
\begin{equation*}
\lim_{N\to\infty}
\frac{Z_N }{(2\pi N)^{d/2}}  \, 2\pi N\, e^{ N  H_i}\,  
\Cap_N (\ms E_N(A), \ms E_N(A^c )) \;=\;
\sum_{\bs z\in \mf S(A)} 
\,  \frac{\mu (\bs z)}{\sqrt{- \det {\rm Hess}\, F(\bs z)}} \;\cdot
\end{equation*}
\end{theorem}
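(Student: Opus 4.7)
The plan is a two-sided variational argument: the Dirichlet principle gives the upper bound and the Thomson principle the matching lower bound. Concretely,
\[
\Cap_N(\ms E_N(A),\ms E_N(A^c)) \;=\; \inf_{g}\,D_N(g),\qquad
D_N(g)\;=\;\tfrac12\!\!\sum_{\bs x,\bs y\in\Xi_N}\!\!\mu_N(\bs x)R_N(\bs x,\bs y)[g(\bs y)-g(\bs x)]^2,
\]
where $g$ ranges over functions equal to $1$ on $\ms E_N(A)$ and $0$ on $\ms E_N(A^c)$. In both bounds one localizes near the saddle points $\bs z\in\mf S(A)$ and reduces, via the quadratic approximation of $F$, to a product of explicit Gaussian integrals.

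\emph{Upper bound.} Around each $\bs z\in\mf S(A)$ fix orthonormal coordinates $\bs e=(e_1,\dots,e_d)$ diagonalizing ${\rm Hess}\,F(\bs z)$, with $e_1$ along the unstable direction (eigenvalue $-\mu(\bs z)$) oriented so that positive $e_1$ points into the adjacent well of $A^c$; let $\mu_2,\dots,\mu_d>0$ denote the stable eigenvalues. On a mesoscopic cylinder $C_{\bs z}=\{\|\bs e\|\le\delta\}$, hypothesis (H1) yields
\[
F(\bs z+\bs e) \;=\; H_i \;-\; \tfrac12\mu(\bs z)e_1^2 \;+\; \tfrac12\sum_{k\ge 2}\mu_k\,e_k^2 \;+\; O(\|\bs e\|^3).
\]
Inside $C_{\bs z}$ set $g_N(\bs x)=\Phi\bigl(e_1\sqrt{N\mu(\bs z)}\bigr)$, where $\Phi(u)=(2\pi)^{-1/2}\int_{-\infty}^u e^{-s^2/2}\,ds$ is the one-dimensional equilibrium potential for the quadratic model; outside $\bigcup_{\bs z}C_{\bs z}$ take $g_N$ constant on each well of $\varOmega^i_j$ (equal to $1$ on wells of $A$ and $0$ on wells of $A^c$) and glue the pieces together along level sets of $F$ lying strictly below $H_i$, so that the interpolation cost is exponentially negligible. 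Using reversibility, $\mu_N(\bs x)R_N(\bs x,\bs y)=Z_N^{-1}\exp\{-(N/2)[F(\bs x)+F(\bs y)]\}$, a Taylor expansion on $C_{\bs z}$, and the change of variable $u=e_1\sqrt{N\mu(\bs z)}$, show that the contribution of $C_{\bs z}$ to $D_N(g_N)$ is a Riemann sum converging to
\[
\frac{(2\pi N)^{(d-2)/2}}{Z_N}\,e^{-NH_i}\,\frac{\mu(\bs z)}{\sqrt{-\det{\rm Hess}\,F(\bs z)}}\,[1+o_N(1)],
\]
namely the product of $(d-1)$ Gaussian integrals in the stable directions with the minimum value $(2\pi)^{-1/2}$ of the one-dimensional variational problem along $e_1$. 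Summing over $\bs z\in\mf S(A)$ delivers the required upper bound.

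\emph{Lower bound and main obstacle.} For the matching lower bound I would apply the Thomson principle: construct a unit divergence-free flow from $\ms E_N(A)$ to $\ms E_N(A^c)$ routing mass through each $\bs z\in\mf S(A)$ in proportion to $\mu(\bs z)/\sqrt{-\det{\rm Hess}\,F(\bs z)}$ and supported essentially inside the saddle cylinders, and check that its energy matches the upper bound. Alternatively, following \cite{begk3}, one may bound $D_N(h^\star_N)$ from below (with $h^\star_N$ the equilibrium potential) by its restriction to the disjoint cylinders $C_{\bs z}$ and compare within each one to a one-dimensional Eyring--Kramers model. The principal difficulty is that this localization requires an a priori regularity statement: $h^\star_N$ must be close to a constant on each well away from its saddles, so that its boundary values on the two faces of $C_{\bs z}$ are asymptotically $0$ and $1$. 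This is proved by spectral-gap / Freidlin--Wentzell estimates within each well, relying crucially on the strict height gap $H_i>H_{i-1}$ and on (H4) to rule out leakage through $\partial\,\Xi$. The remaining technical points are the uniform control via (H1) of the cubic error in the Taylor expansion across cylinders of size $\delta\gg N^{-1/2}$, and the passage from continuous Gaussian integrals to Riemann sums on $\Xi_N$, handled by the discrete Laplace-method machinery of \cite{bbi1, bbi2}.
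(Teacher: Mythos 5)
Your overall strategy is the same as the paper's: Dirichlet's principle with a test function interpolating through the one--dimensional Gaussian cumulative function $\Phi(\sqrt{N\mu(\bs z)}\,e_1)$ inside a mesoscopic box at each $\bs z\in\mf S(A)$ and locally constant on the wells, giving the upper bound; Thomson's principle with a unit, divergence--free flow concentrated near the saddles, giving the lower bound. The Gaussian computation, the $(2\pi N)^{(d-2)/2}/Z_N$ normalization, and the role of (H1) in controlling the cubic Taylor error are all exactly as in the paper.

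The genuine gap is in the lower bound, and it is not the one you single out. You write ``construct a unit divergence--free flow from $\ms E_N(A)$ to $\ms E_N(A^c)$ routing mass through each $\bs z$ in proportion to $\omega(\bs z)$ \dots\ and check that its energy matches the upper bound,'' as though the candidate flow were essentially $\nabla g_N$ rescaled. In the continuum that would be divergence--free because $g_N$ depends only on the $\bs v$--coordinate, but on the lattice $N^{-1}\bb Z^d$ the discrete divergence of the analogous edge flow $\Phi(\bs x,\bs x+\bs e_j)\propto \bs v_j\,e^{-(N/2)\,\bs y\cdot\check{\bb M}\bs y}$ does \emph{not} vanish when the unstable eigenvector $\bs v$ is not a coordinate direction. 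Resolving this is the crux of the paper's Section~\ref{vsec01}: it introduces a cone $Q_N$ adapted to $\bs v$, a ``generation'' decomposition of $Q_N$, an estimate $|\mathrm{div}\,\Phi(\bs x)/\Phi(\bs x,\bs x+\bs e_i)|\le C_0\varepsilon_N^2$, and a recursively defined correction flow $R$ with $|R/\Phi|\le C_0 k\varepsilon_N^2$ on the $k$--th generation, so that $\Psi=\Phi+R$ is divergence--free with $\Vert\Psi\Vert^2=[1+o_N(1)]\Vert\Phi\Vert^2$, plus a separate extension of the flow from the saddle box to the wells along steepest--descent paths. None of this is routine, and your sketch does not mention the issue. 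The ``principal difficulty'' you actually identify --- a priori flatness of the equilibrium potential $h^\star_N$ on each well --- pertains only to the BEGK--style alternative you mention in passing; the Thomson--flow route the paper follows (and that you propose first) never needs such regularity, which is precisely why it is preferred here. As written, then, your lower bound is a statement of intent rather than an argument, and the one obstacle you discuss is for a route you do not pursue.

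Two smaller remarks. First, the gluing step of the test function in the upper bound is not quite ``along level sets strictly below $H_i$'': the paper sets $V^A_N$ equal to $1/2$ outside a connected set $\ms U\subset\{F<H_i+\vartheta\varepsilon_N^2\}$ and must check that the Dirichlet contribution from $\partial\ms U_N$ and from the side faces $B^*_N$ of the saddle boxes is $o(1)$ of the main term (Assertions~\ref{vs15}--\ref{vs16}); the correct smallness comes from the quadratic growth of $F$ in the stable directions, not from being ``strictly below $H_i$''. Second, once one has a single--saddle flow $\Phi_{\bs z}$ of the right energy $a(\bs z)\propto\omega(\bs z)^{-1}$, the optimal convex combination has weights $\theta_{\bs z}=a(\bs z)^{-1}/\sum_{\bs z'}a(\bs z')^{-1}$ and the cross terms vanish because the flows have essentially disjoint supports; this last point should be said explicitly, since it is what turns a sum of reciprocals into the reciprocal of a sum.
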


This result together with two other estimates permit to prove the
metastable behavior of the Markov chain $X_N(t)$ among the shallowest
valleys $\ms E^a_N$. To examine the metastable behavior of the chain
$X_N(t)$ on deeper wells we need to extend Theorem \ref{vs05} to
disjoint sets $A$, $B$ which do not form a partition of $S$, $A\cup B
\not = S$. The statement of this extension and its proof requires the
introduction of a graph.

\smallskip\noindent{\bf B. A Graph associated to the chain.}  
Let $\bb G = (S, E)$ be the weighted graph whose vertices are $S=\{1,
\dots, \ell\}$, the indices of the sets $W_{a}$. Place an edge between
$a$ and $b\in S$ if and only if there exists a saddle point $\bs z$
belonging to $W_{a} \cap W_{b}$, i.e., if $\mf S_{a,b} \not =
\varnothing$. The weight of the edge between $a$ and $b$, denoted by
$\bs c(a,b)$, is set to be
\begin{equation}
\label{v22}
\bs c(a,b) \;=\; \sum_{\bs z\in \mf S_{a,b}} 
\,  \frac{\mu (\bs z)}{\sqrt{- \det {\rm Hess}\, F(\bs z)}} \;.    
\end{equation}
Note that $\bs c(a,b)$ vanishes if there is no saddle point $\bs z$
belonging to $W_{a} \cap W_{b}$ and that the weights are independent
of $N$. Figure \ref{fig2} present the weighted graph associated to
one of the connected component of Figure \ref{fig1}. 

\begin{figure}[htb]
  \centering
  \def\svgwidth{340pt}
  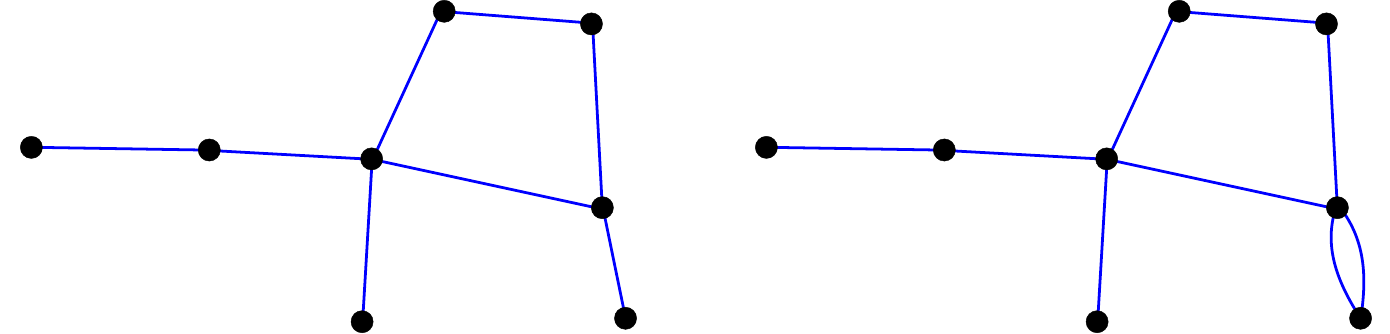
  \caption{The simple weighted graph and the graph with multiple edges
associated to one of the connected components of Figure~\ref{fig1}.}
\label{fig2}
\end{figure}

The graph $\bb G$ has to be interpreted as an electrical network,
where the weights $\bs c(a,b)$ represent the conductances.  It would
be more natural to start with a graph with multiple edges, each edge
corresponding to a saddle point $\bs z$. However, adding the parallel
conductances one can reduce the graph with multiple edges to the above
graph.

Let
\begin{equation*}
c_N(a,b) \;=\; \frac{(2\pi N)^{d/2}}{Z_N }\,
\frac{e^{- N  H_i}}{2 \pi N} \, \bs c(a,b)  \;,\;\; a\;,\; b\in S\;.
\end{equation*}
It follows from Theorem \ref{vs05} and from a calculation
that
\begin{equation}
\label{v21}
\begin{split}
& c_N (a,b) \;=\; \\
&\quad [1+o_N(1)] \, \frac 12 \Big\{ \Cap_N(\mc E^{a}_N, \breve{\mc
E}^{a}_N) \;+\; \Cap_N(\mc E^{b}_N, \breve{\mc E}^{b}_N) 
\;-\; \Cap_N \big(\mc E^{a}_N \cup \mc E^{b}_N, \cup_{c\not = a,b}
\mc E^{c}_N \big) \Big\}\;,
\end{split}
\end{equation}
where, $\breve{\mc E}^{a}_N \;=\; \cup_{c\not = a} \mc E^{c}_N$.  This
explains the definition of $c_N(a,b)$. Moreover, by \cite[Lemma
6.8]{bl2}, $c_N(a,b)$ is equal to $\mu_N(\mc E^{a}_N) r_N (\mc
E^{a}_N, \mc E^{b}_N)$, where $r_N = r^1_N$ represents the average
rates introduced below in \eqref{v28}.

For two disjoint subsets $A$, $B$ of $S$, denote by $\Cap_{\bb G}
(A,B)$ the \emph{conductance} between $A$ and $B$. To define the
conductance, denote by $\{Y_k : k\ge 0\}$ the discrete-time random
walk on $S$ which jumps from $a$ to $b$ with probability
\begin{equation}
\label{v42}
p(a,b) \;=\; \frac{\bs c(a,b)}{\sum_{b'\in S} \bs c (a,b')}\;\cdot
\end{equation}
Denote by $\bb P^Y_a$, $a\in S$, the distribution of the chain $Y_k$
starting from $a$ and by $V_{A,B}$, $A$, $B\subset S$, $A\cap
B=\varnothing$, the equilibrium potential between $A$ and $B$:
\begin{equation*}
V_{A,B} (b) \;=\; \bb P^Y_b \big[ H_A < H_B \big]\;,\;\; b\in S\;,
\end{equation*}
where $H_C$, $C\subset S$, represents the hitting time of $C$: $H_C =
\min\{k\ge 0 : Y_k\in C\}$. The conductance between $A$ and $B$ is
defined as
\begin{equation*}
\Cap_{\bb G} (A,B) \;=\; \frac 12 \sum_{a,b\in S} \bs c (a,b)
\big[V_{A,B} (b)  - V_{A,B} (a) \big]^2 \;.
\end{equation*}
By \cite[Proposition 3.1.2]{g1} the conductance between $A$ and $B$
coincides with the capacity between $A$ and $B$. The next result
establishes that the capacities for the chain $X_N(t)$ can be computed
from the conductances on the finite graph $\bb G$.

\begin{theorem}
\label{vs04}
For every disjoint subsets $A$, $B$ of $S$,
\begin{equation*}
\Cap_N (\ms E_N(A), \ms E_N(B) ) \;=\; [1+o_N(1)]\,  
\frac{(2\pi N)^{d/2}}{Z_N }\,
\frac{e^{- N  H_i}}{2 \pi N} \, \Cap_{\bb G} (A,B)\;.
\end{equation*}
\end{theorem}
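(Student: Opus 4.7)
The plan is to combine the Dirichlet and Thomson variational principles with the saddle point asymptotics that underlie Theorem~\ref{vs05}. The case $A\cup B = S$ is precisely what Theorem~\ref{vs05} handles, so the new content is the extension to arbitrary disjoint $A,B\subset S$ with $A\cup B\subsetneq S$. The common prefactor $\frac{(2\pi N)^{d/2}}{Z_N}\frac{e^{-NH_i}}{2\pi N}$ is fixed throughout and I write $\alpha_N$ for it.

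For the upper bound, I would apply Dirichlet's principle,
\begin{equation*}
\Cap_N(\ms E_N(A), \ms E_N(B)) \;\le\; \tfrac12 \sum_{\bs x, \bs y \in \Xi_N} \mu_N(\bs x)\, R_N(\bs x, \bs y) \bigl[f_N(\bs y)-f_N(\bs x)\bigr]^2,
\end{equation*}
to the test function $f_N$ built from the graph equilibrium potential $V = V^{\bb G}_{A,B}\colon S\to[0,1]$. On each well $\ms E^a_N$ put $f_N\equiv V(a)$; in a mesoscopic neighborhood of each saddle $\bs z\in \mf S_{a,b}$, choose $f_N$ to be the one-dimensional Gaussian profile in the unstable direction that arises in Kramers' formula, matching the boundary values $V(a)$ and $V(b)$ on the two sides. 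The Laplace asymptotic computation that appears in the proof of Theorem~\ref{vs05} (and which controls each saddle contribution there) now gives the saddle contribution $\alpha_N\,[\mu(\bs z)/\sqrt{-\det{\rm Hess}\,F(\bs z)}]\,(V(a)-V(b))^2(1+o_N(1))$. Summing over saddles and invoking \eqref{v22} and the definition of $\Cap_{\bb G}$ yields $\Cap_N(\ms E_N(A),\ms E_N(B))\le (1+o_N(1))\,\alpha_N\,\Cap_{\bb G}(A,B)$. Contributions away from the saddle neighborhoods are negligible because $f_N$ is locally constant on each well.

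For the matching lower bound I would use Thomson's principle, which expresses $\Cap_N$ as the infimum of the energy of unit flows from $\ms E_N(A)$ to $\ms E_N(B)$. Take the optimal electric flow $\phi^*(a,b)=\bs c(a,b)(V(a)-V(b))$ on $\bb G$, whose Thomson energy equals $1/\Cap_{\bb G}(A,B)$. Lift it to a unit flow $\phi_N$ on $\Xi_N$ by routing, for each edge $(a,b)$ of $\bb G$, the value $\phi^*(a,b)$ through a tube of lattice paths passing across the saddles in $\mf S_{a,b}$, splitting proportionally to $\mu(\bs z)/\sqrt{-\det{\rm Hess}\,F(\bs z)}$. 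The leading contribution to the discrete energy $\tfrac12\sum \phi_N(\bs x,\bs y)^2/[\mu_N(\bs x) R_N(\bs x,\bs y)]$ again comes from the saddle neighborhoods, and a calculation dual to the one above shows that near $\bs z\in\mf S_{a,b}$ it produces $\alpha_N^{-1}\phi^*(a,b)^2/\bs c(a,b)$ up to $(1+o_N(1))$. Summing reproduces $\alpha_N^{-1}/\Cap_{\bb G}(A,B)$, giving the matching lower bound.

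The main obstacle is the flow construction and its energy estimate: one must specify a genuinely divergence-free discrete unit flow whose strength across each saddle matches the prescribed graph value, and then argue that its energy concentrates in the saddle neighborhoods with the correct leading asymptotics. The Dirichlet half is comparatively straightforward, as it reduces directly to the multidimensional Gaussian Laplace asymptotics already developed to prove Theorem~\ref{vs05}; the Thomson half duplicates that analysis for the conjugate variational problem, but requires additional combinatorial work to build a tube of lattice paths around each heteroclinic connection joining two wells through a saddle while controlling the local energy.
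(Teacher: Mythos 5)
Your plan is the one the paper follows: upper bound via a Dirichlet test function equal to the graph equilibrium potential on each valley and interpolated through each saddle neighborhood with the one-dimensional Gaussian error-function profile; lower bound via Thomson's principle by lifting the optimal graph flow to a discrete flow concentrated near the saddles. This matches Propositions \ref{vs17} and \ref{vs18}, with Assertion \ref{vs19} being essentially your Laplace-asymptotics computation on the Dirichlet side.

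Two remarks on the gap you identify in the Thomson half. First, a small normalization slip: the Ohm's law current $\phi^*(a,b)=\bs c(a,b)[V(a)-V(b)]$ is not unitary; for $\Vert\varphi\Vert^2 = 1/\Cap_{\bb G}(A,B)$ one must take $\varphi(a,b)=\phi^*(a,b)/\Cap_{\bb G}(A,B)$, as in \eqref{v38}. Second, and more substantively, you do not actually need to build a ``tube of lattice paths'' from scratch. The paper's device is to reuse the single-saddle divergence-free unitary flows $\Phi_{\bs z}$ already constructed in Section \ref{vsec01} for the proof of Proposition \ref{vs03}: each $\Phi_{\bs z}$, $\bs z\in\mf S_{a,b}$, is a unitary flow from a fixed point $\bs x^a_N\in\ms E^a_N$ to a fixed point $\bs x^b_N\in\ms E^b_N$ with dissipated energy $[1+o_N(1)]\,\alpha_N^{-1}\,\omega(\bs z)^{-1}$. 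The global flow is then the superposition
\begin{equation*}
\Psi \;=\; \sum_{a,b}\varphi(a,b)\sum_{k}\theta_k(a,b)\,\Phi_{\bs z^k_{a,b}},\qquad
\theta_k(a,b)=\frac{\omega(\bs z^k_{a,b})}{\bs c(a,b)}\,.
\end{equation*}
Unitarity follows because $\varphi$ is a unitary graph flow from $A$ to $B$; $\Psi$ is divergence-free because the $\Phi_{\bs z}$ share their endpoints $\bs x^c_N$ and $\varphi$ has zero divergence at each $c\in S\setminus(A\cup B)$; and the cross terms $\sum\Phi_{a,b}\Phi_{a',b'}/c$ for $(a,b)\neq(a',b')$ are $o_N(1)$ relative to the diagonal because distinct single-saddle flows carry essentially disjoint supports near their respective saddles, their overlaps living only at energy levels where $F$ is strictly below $H_i$ and hence exponentially suppressed (this is the second line of \eqref{v39}). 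If you set things up this way, the ``additional combinatorial work'' you flag reduces to checking these three facts, all of which are consequences of the construction already carried out for Theorem \ref{vs05}. Your per-saddle bookkeeping should also be adjusted: with the split $\theta_k$, the saddle $\bs z^k_{a,b}$ contributes $\alpha_N^{-1}\varphi(a,b)^2\omega(\bs z^k_{a,b})/\bs c(a,b)^2$, and it is only after summing over $k$ that one gets $\alpha_N^{-1}\varphi(a,b)^2/\bs c(a,b)$.
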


\begin{remark}
\label{vs21}
It follows from the proofs of Theorems \ref{vs05} and \ref{vs04} that
both statements remain in force if we replace the sets $\ms E^a_N$ by
singletons $\{\bs x^a_N\}$, where $\bs x^a_N\in \ms E^a_N$. In this case the
sets $\ms E_N(A)$ become $\{\bs x^a_N : a\in A\}$.
\end{remark}

\smallskip\noindent{\bf C. Metastability.}  The Markov chain $X_N(t)$
exhibits a metastable behavior among the wells of each connected
component $\varOmega^i_j$.  The description of this behavior requires
some further notation.

Recall that $h_a = F(\bs m_{a,1})$ represents the value of $F$ at a
deepest minima of the well $W_a$.  Let $\hat \theta_{a} = H_i - h_a
>0$, $a\in S$, be the depth of the well $W_{a} $.  The depths $\hat
\theta_{a}$ provide the time-scale at which a metastable behavior is
observed. Let $\theta_{1} < \theta_{2} < \cdots < \theta_{n}$, $n\le
\ell$, be the increasing enumeration of the sequence $\hat
\theta_{a}$, $1\le a\le \ell$:
\begin{equation*}
\{\hat \theta_{1}, \dots, \hat \theta_{\ell}\} \;=\; 
\{\theta_{1}, \dots, \theta_{n}\}\;.
\end{equation*}
Of course, $n$ and $\theta_m$ depend on the component $\varOmega^i_j$.
If we need to stress this dependence, we will denote $n$, $\theta_m$
by $n_{i,j}$, $\theta^{i,j}_m$, respectively.

The chain exhibits a metastable behavior on $n$ different time scales
in the set $\varOmega$. Let $T_m = \{a\in S : \hat \theta_{a} =
\theta_m\}$, $1\le m\le n$, so that $T_1, \dots, T_n$ forms a
partition of $S$, and let
\begin{equation*}
S_m \;=\; T_m \cup \cdots \cup T_n\;,\;\; 1\le m\le n\;.
\end{equation*}
Define the projection $\Psi^m_N : \Xi_N\to S_m \cup \{ N\}$, $1\le
m\le n$, as
\begin{equation}
\label{v25}
\Psi^m_N (\bs x) \;=\; \sum_{a\in S_m} a \, \mb 1 \big\{\bs x \in \ms
E^{a}_N \big\} \;+\; N\, \mb 1\Big\{\bs x \not\in \bigcup_{a\in S_m} \ms
E^{a}_N \Big\} \;.
\end{equation}
Denote by $\bs X^m_N(t)$ the projection of the Markov chain $X_N(t)$
by $\Psi^m_N$:
\begin{equation*}
\bs X^m_N(t) \;=\; \Psi^m_N (X_N(t))\;.
\end{equation*}

Fix $1\le m\le n$.  We introduce some notation to define the
asymptotic dynamics of the process $\bs X^m_N(t)$. The time scale in
which the process $\bs X^m_N$ evolves, denoted by $\beta_m = \beta_m
(N)$, is given by
\begin{equation*}
\beta_m \;=\; 2\pi N \, e^{\theta_m N}\;.
\end{equation*}

For $a$, $b$ in $S_m$, let 
\begin{equation}
\label{v41}
\bs c_m (a,b) \;=\; \frac 12 \Big\{ \Cap_{\bb G}( \{a\}, S_m\setminus
\{a\}) \;+\; \Cap_{\bb G}( \{b\}, S_m\setminus \{b\}) -
\Cap_{\bb G}( \{a,b\}, S_m\setminus \{a, b\}) \Big\} \;.
\end{equation}
Note that $\bs c_m(a,b)$ represents the conductance between $a$ and $b$
for the electrical circuit obtained from $\bb G$ by removing the
vertices in $S^c_m$. In particular, $\bs c_1 (a,b) = \bs c (a,b)$ for
$a$, $b\in S_m$. Let
\begin{equation}
\label{v23}
\bs r_m (a,b) \;=\;
\begin{cases}
\bs c_m (a,b)/\bs \mu (a) & a\in T_m \,,\, b\in S_m \;, \\
0 & a\in S_{m+1} \,,\, b\in S_m\;.
\end{cases}
\end{equation}
Recall from \cite{l2} the definition of the soft topology.

\begin{theorem}
\label{vs07}
Fix $1\le i \le i_0$, $1\le j\le \ell_i$, $1\le m\le n_{i,j}$, $a\in
S_m$ and a sequence of configurations $\bs x_N$ in $\ms
E^{a}_N$. Under $\mb P_{\bs x_N}$, the time re-scaled projection $\bb
X^m_N(t) = \bs X^m_N(t \beta_m)$ converges in the soft topology to a
$S_m$-valued continuous-time Markov chain $\bb X^m(t)$ whose jump
rates are given by \eqref{v23}. In particular, the points in $S_{m+1}$
are absorbing for the chain $\bb X^m(t)$.
\end{theorem}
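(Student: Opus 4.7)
The plan is to apply the general martingale approach to metastability in the soft topology developed in \cite{bl2, bl7, l2}, working with the trace of $X_N(t)$ on the set $\ms E_N(S_m) = \bigcup_{a\in S_m} \ms E^a_N$. In this framework three ingredients are required: convergence of the mean jump rates of the trace to $\bs r_m$; negligibility, on the scale $\beta_m$, of the time the projection spends outside $\ms E_N(S_m)$; and tightness in the soft topology.

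\emph{Rates of the trace.} Denote by $r^{\ms E}_N(\ms E^a_N, \ms E^b_N)$ the mean jump rates of the trace of $X_N$ on $\ms E_N(S_m)$. By \cite[Lemma 6.8]{bl2},
\begin{equation*}
\mu_N(\ms E^a_N)\, r^{\ms E}_N(\ms E^a_N, \ms E^b_N) \;=\; \tfrac{1}{2}\big\{\Cap_N(\ms E^a_N, \ms E_N(S_m\setminus\{a\})) + \Cap_N(\ms E^b_N, \ms E_N(S_m\setminus\{b\})) - \Cap_N(\ms E_N(\{a,b\}), \ms E_N(S_m\setminus\{a,b\}))\big\}.
\end{equation*}
I would substitute the asymptotic expressions of Theorem \ref{vs04} for the three capacities and the estimate \eqref{v24} for $\mu_N(\ms E^a_N)$; the prefactors cancel and one obtains
\begin{equation*}
\beta_m\, r^{\ms E}_N(\ms E^a_N, \ms E^b_N) \;=\; [1+o_N(1)]\, e^{N(\theta_m - \hat\theta_a)}\, \frac{\bs c_m(a,b)}{\bs \mu(a)}\,.
\end{equation*}
This tends to $\bs r_m(a,b) = \bs c_m(a,b)/\bs \mu(a)$ when $a\in T_m$ (so $\hat\theta_a = \theta_m$) and to $0$ when $a \in S_{m+1}$ (so $\hat\theta_a > \theta_m$), which already matches \eqref{v23} and explains why the vertices in $S_{m+1}$ are absorbing.

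\emph{Time outside $\ms E_N(S_m)$.} The complement splits into the shallower wells $\ms E_N(S_m^c)$ and the bridge region $\Delta_N := \Xi_N \setminus \ms E_N(S)$. For each $a \in S_m^c$ the depth $\hat\theta_a$ equals one of the values $\theta_1, \dots, \theta_{m-1}$, and the Dirichlet principle together with the capacity estimate of Theorem \ref{vs04} (applied to the partition that isolates $\{a\}$) combined with \eqref{v24} yields $\mb E_{\bs x}[H_{\ms E_N(S_m)}] = O(2\pi N\, e^{N\hat\theta_a}) = o(\beta_m)$ uniformly in $\bs x \in \ms E^a_N$. For the bridge region $\Delta_N$ the escape time to $\ms E_N(S)$ is polynomial in $N$ by (H3) and standard harmonic-measure estimates near the saddles, hence also $o(\beta_m)$. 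A Markov inequality then produces
\begin{equation*}
\frac{1}{\beta_m}\int_0^{T\beta_m} \mb 1\{X_N(s) \notin \ms E_N(S_m)\}\, ds \;\xrightarrow[N\to\infty]{\mb P_{\bs x_N}}\; 0
\end{equation*}
for every $T>0$.

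\emph{Conclusion.} The two ingredients above, inserted into the general convergence theorem of \cite{bl9, l2}, show that the time-changed trace on $\ms E_N(S_m)$ converges in law to the $S_m$-valued Markov chain with rates $\bs r_m$; the control of the unknown time then transfers the convergence to $\bb X^m_N(t)$ in the soft topology. Tightness follows from the same excursion estimates on the visits to $\Xi_N\setminus \ms E_N(S_m)$.

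The main obstacle is the uniform control in the second step: bounding a single excursion into a shallower well $\ms E^a_N$, $a\in S_m^c$, by $o(\beta_m)$ is straightforward from the capacity computations, but showing that the \emph{cumulative} time of all such excursions over the horizon $[0, T\beta_m]$ remains $o(\beta_m)$ requires a careful count of returns. This is precisely the mechanism that collapses the shallower vertices of $\bb G$ into the reduced electrical network whose conductances are the $\bs c_m(a,b)$ of \eqref{v41}, and that turns $S_{m+1}$ into absorbing points.
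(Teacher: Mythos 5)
Your proposal correctly identifies the overall architecture: pass to the trace of $X_N$ on $\ms E_N(S_m)$, compute the asymptotic jump rates from Theorem \ref{vs04} and \eqref{v24} via \cite[Lemma~6.8]{bl2}, show that the time spent outside $\ms E_N(S_m)$ is negligible in the scale $\beta_m$, and then invoke the general convergence machinery of \cite{bl2, l2} to pass from Skorohod convergence of the trace to soft-topology convergence of $\bb X^m_N$. Your computation of $\beta_m\,r^{\ms E}_N(\ms E^a_N,\ms E^b_N)$, including the factor $e^{N(\theta_m-\hat\theta_a)}$ that kills the rates out of $S_{m+1}$, matches the paper's Lemma~\ref{vs11}. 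However, there are two substantive gaps.

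First, you omit the intra-well equilibration condition. In the framework of \cite[Theorem~2.7]{bl2}, convergence of the average jump rates is not enough: one must also verify that, from every starting point $\bs y\in\ms E^a_N$, the chain reaches the bottom $\bs m_{a,1}$ of its well on a time scale much shorter than the one on which it jumps to another well. This is precisely the paper's Lemma~\ref{vs10},
\begin{equation*}
\lim_{N\to\infty}\sup_{\bs y\in\ms E^a_N}\frac{\Cap_N(\ms E^a_N,\ \cup_{b\in S_m,b\neq a}\ms E^b_N)}{\Cap_N(\{\bs y\},\{\bs m_{a,1}\})}\;=\;0\;,
\end{equation*}
proved by exhibiting a nearest-neighbour path from $\bs y$ to $\bs m_{a,1}$ staying below height $H_i-\epsilon$ and applying Thomson's principle. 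Without this condition the exit law of a well depends on the entry point, the trace process is not asymptotically Markovian, and the convergence claim fails; ``tightness'' alone, which you list as your third ingredient, does not substitute for it.

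Second, you correctly flag the control of the cumulative time outside $\ms E_N(S_m)$ as the ``main obstacle'', but your suggested route (bounding the expected duration of one excursion into a shallower well and then carefully counting returns) is not what the paper does and would be delicate to make rigorous. The paper instead: (a) uses a large deviations estimate to confine the chain to a slightly enlarged component $A_N$ up to time $T\beta_m$, (b) replaces $X_N$ by the chain reflected in $A_N$, and (c) verifies the \emph{measure-ratio} condition of \cite[Theorem~2.7]{bl2},
\begin{equation*}
\lim_{N\to\infty}\frac{\tilde\mu_N(\Delta_{m,N})}{\tilde\mu_N(\ms E^a_N)}\;=\;0\;,\qquad \Delta_{m,N}=A_N\setminus\ms E_{m,N}\;,
\end{equation*}
together with an escape-time lower bound \eqref{v29} for the deeper wells. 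This stationary-measure criterion sidesteps entirely the excursion counting you describe, and also dispenses with your unproved claim that the escape time from the bridge region is polynomial in $N$. You should replace your second step with this argument, or at least supply the return-counting estimate you acknowledge is missing.
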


\begin{remark}
\label{vs08}
Theorem \ref{vs07} states that the weighted graph $\bb G$, the measure
$\bs \mu$ and the sequence $\beta_m(N)$ describe the evolution of the
chain $X_N(t)$ in the connected component $\varOmega$. The weighted graph
with multiple edges would describe more accurately the chain $X_N(t)$,
providing the probability that the chain leaves a well $W_a$ through a
mesoscopic neighborhood of a saddle point $\bs z \in \mf S$. This
statement is made precise in Theorem \ref{vs20} below.
\end{remark}

\begin{remark}
\label{vs06}
Nothing prevent two time-scales at different levels to be equal, or
two time scales in different connected components of the same level to
be equal. It is possible that $\theta^{i,j}_{m} = \theta^{i',j'}_{m'}$
for some $i\not = i'$ or that $\theta^{i,j}_{m} = \theta^{i,j'}_{m'}$
for some $j\not = j'$. 
\end{remark}

\smallskip\noindent{\bf D. Exit points from a well.}
Fix $1\le i \le i_0$, $1\le j\le \ell_i$, and recall that we denote by
$W_a = W^i_{j,a}$, $a\in S = \{1, \dots, \ell^i_j\}$, the wells which
form the connected component $\varOmega^i_j$. The last result of this
article states that the chain $X_N(t)$ leaves the set $W_a$ through a
neighborhood of a saddle point $\bs z$ in the boundary of $W_a$ with
probability $\omega (\bs z)/\sum_{\bs z'} \omega (\bs z')$, where the
summation is carried over all saddle points in the boundary of $W_a$
and where
\begin{equation}
\label{v44}
\omega (\bs z) \;=\;
\frac{\mu (\bs z)}{\sqrt{- \det {\rm Hess}\, F(\bs z)}} \;\cdot 
\end{equation}

Let $\delta_N$ be a sequence such that $\delta_N \ll N^{-3/4}$,
$N^{d+1} \exp\{- N \delta_N\}\to 0$. Denote by $\varOmega_N =
\varOmega^i_{j, N}$ the connected component of the set $\{\bs x\in \Xi
: F(\bs x) \le H_i + \delta_N\}$ which contains $\varOmega^i_j$. Since
$\delta_N\downarrow 0$, for $N$ large enough, $\varOmega^i_{j, N} \cap
\varOmega^i_{j',N} = \varnothing$ for all $j'\not = j$. In particular,
for $N$ large enough there is a one-to-one correspondance between
$\varOmega^i_{j}$ and $\varOmega^i_{j, N}$.

Fix $a\in S$ and let $\mf S_a$ be the set of saddle points in the
boundary of $W_a$, $\mf S_a = \cup_{b\in S, b\not = a} \mf S_{a,b}$.
Denote by $\partial \varOmega_N$ the boundary of $\varOmega_N$ and by
$B_\epsilon(\bs x)$ the open ball of radius $\epsilon>0$ around $\bs
x\in \Xi$.  We modify the set $\partial \varOmega_N$ around each
saddle point $\bs z\in \mf S_a$ to obtain a closed manifold
$D_a\subset \varOmega_N$.

Fix a saddle point $\bs z\in \mf S_a$ and recall condition (H3) on
$F$. Denote by $-\mu < 0 < \lambda_2 \le \cdots \le \lambda_d$ the
eigenvalues of ${\rm Hess}\, F(\bs z)$, and by $\bs v$, $\bs w^i$,
$2\le i\le d$, an associated orthonormal basis of eigenvectors. Let
$\bb H = \bb H_{\bs z}$ be the $(d-1)$-dimensional hyperplane
generated by the vectors $\bs w^i$, $2\le i\le d$. By a Taylor
expansion, there exists $\epsilon>0$ such that 
\begin{equation}
\label{v45}
F(\bs x) \;\ge\; H_i \;+\; 
\frac { \lambda_2}4 \, \Vert \bs x- \bs z\Vert^2
\end{equation}
for $\bs x \in \bs z + \bb H =\{\bs z + \bs y : \bs y\in \bb H\}$ such
that $\Vert \bs x - \bs z\Vert \le \epsilon$. Let
\begin{equation}
\label{v52}
D_{\bs z} \;=\; \big\{\bs y \in (\bs z + \bb H) \cap B_\epsilon (\bs z) : 
F(\bs y) \le H_i + \delta_N \big\}\;.
\end{equation}
We intersected the set $\bs z + \bb H$ with the set $B_\epsilon (\bs
z)$ to avoid including in $D_{\bs z}$ points which are far from $\bs
z$. 

The set $D_a = D_a^N$ is defined as follows. For each $\bs z\in\mf
S_a$, remove from $\partial \Omega_N$ the set $(\bs z + \bb H)
\cap \partial \Omega_N \cap B_\epsilon (\bs z)$. As before, the set
$B_\epsilon (\bs z)$ has been introduced to avoid removing from
$\partial \Omega_N$ points which are far from $\bs z$. Denote by
$\Omega^1_N$ the set obtained after this operation, which is a finite
union of connected sets. Remove from $\Omega^1_N$ all connected
component which contain a point close to some saddle point which does
not belong to $\mf S_a$.  Denote this new set by $\Omega^2_N$. $D_a$
is the union of $\Omega^2_N$ with all set $D_{\bs z}$, $\bs z \in\mf
S_a$:
\begin{equation*}
D_a \;=\; \bigcup_{\bs z \in\mf S_a} D_{\bs z} \;\cup\; \Omega^2_N \;.
\end{equation*}

Let $\ms D_a$, $\ms D_{\bs z}\subset \Xi_N$, be discretizations of the
sets $D_a$, $D_{\bs z}$: $\ms D_a = \{\bs x\in \Xi_N : d(\bs x, D_a)
\le 1/N\}$, where $d$ stands for the Euclidean distance, $d(\bs x, A)
= \inf_{\bs y\in A} \Vert \bs x - \bs y\Vert$.

\begin{theorem}
\label{vs20}
Fix $1\le i \le i_0$, $1\le j\le \ell_i$, and $a\in S = \{1, \dots
\ell^i_j\}$.  Let .  For all $\bs z\in \mf S_a$, and all sequences 
$\{\bs x_N : N\ge 1\}$, $\bs x_N\in \ms E^a_N$,
\begin{equation*}
\lim_{N\to\infty} \mb P_{\bs x_N} \Big[ H_{\ms D_a} = 
H_{\ms D_{\bs z}} \Big]  \;=\; \frac{\omega (\bs z)}
{\sum_{\bs z'\in \mf S_a} \omega (\bs z')}\;\cdot
\end{equation*}
\end{theorem}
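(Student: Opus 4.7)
Set $h_{\bs z}(\bs x) := \mb P_{\bs x}[H_{\ms D_{\bs z}} < H_{\ms D_a \setminus \ms D_{\bs z}}]$. Assumption (H4) together with the choice of $\delta_N$ guarantee that the chain started in $\ms E^a_N$ reaches $\ms D_a$ before escaping the component $\varOmega_N$ with probability $1-o_N(1)$; consequently $\{H_{\ms D_a}=H_{\ms D_{\bs z}}\}$ coincides with $\{H_{\ms D_{\bs z}} < H_{\ms D_a \setminus \ms D_{\bs z}}\}$ up to $o_N(1)$, so it suffices to prove that $h_{\bs z}(\bs x_N)\to \omega(\bs z)/\sum_{\bs z'\in\mf S_a}\omega(\bs z')$. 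The first ingredient is an equidistribution statement: for all $\bs x,\bs y\in \ms E^a_N$, $h_{\bs z}(\bs x)-h_{\bs z}(\bs y)=o_N(1)$. This holds because the interior of the well $W_a$ contains only critical points of $F$ at heights strictly smaller than $H_i$, so the chain equilibrates within $\ms E^a_N$ on an exponentially smaller time scale than the expected time to reach $\ms D_a$; a standard strong-Markov argument, analogous to those underlying Theorem \ref{vs07}, then yields the claim.

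Let $\bar h_{\bs z}$ denote the $\mu_N$-average of $h_{\bs z}$ on $\ms E^a_N$, so that $h_{\bs z}(\bs x_N)=\bar h_{\bs z}+o_N(1)$. Introduce the \emph{partial capacity}
\[
C_N(\bs z) \;:=\; \sum_{\bs y \in \ms E^a_N} \mu_N(\bs y)\,\lambda_N(\bs y)\, \mb P_{\bs y}\!\left[ H_{\ms D_{\bs z}} < H^+_{\ms E^a_N} \wedge H_{\ms D_a \setminus \ms D_{\bs z}} \right].
\]
Since the sets $\ms D_{\bs z'}$, $\bs z'\in\mf S_a$, are pairwise disjoint, summation yields $\sum_{\bs z'\in\mf S_a} C_N(\bs z')=\Cap_N(\ms E^a_N,\ms D_a)$. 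Decomposing $h_{\bs z}(\bs y)$, $\bs y\in\ms E^a_N$, into the probability of a direct exit through $\ms D_{\bs z}$ plus the strong-Markov contribution of an excursion returning to $\ms E^a_N$, and using the equidistribution to replace the post-return value by $\bar h_{\bs z}$, one obtains $\bar h_{\bs z}\,\Cap_N(\ms E^a_N,\ms D_a) = [1+o_N(1)]\,C_N(\bs z)$, hence
\[
\bar h_{\bs z} \;=\; [1+o_N(1)]\,\frac{C_N(\bs z)}{\sum_{\bs z'\in\mf S_a} C_N(\bs z')}.
\]

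The core technical step is the localized capacity estimate
\[
C_N(\bs z) \;=\; [1+o_N(1)]\,\frac{(2\pi N)^{d/2}}{Z_N}\,\frac{e^{-NH_i}}{2\pi N}\,\omega(\bs z),
\]
a single-saddle analogue of Theorem \ref{vs05}. The upper bound follows from Dirichlet's principle with a test function of the Bovier--Eckhoff--Gayrard--Klein type: near $\bs z$ it transitions from $1$ to $0$ across $D_{\bs z}$ along the unstable eigendirection of $\mathrm{Hess}\,F(\bs z)$, equals $1$ in a neighborhood of $\ms E^a_N$ on the well side of $\bs z$, and vanishes in small neighborhoods of every other saddle in $\mf S_a$ (compatibly with the Dirichlet condition on $\ms D_a\setminus \ms D_{\bs z}$). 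Laplace's method around $\bs z$ then produces $\omega(\bs z)$ via the Gaussian integral over the stable directions combined with the $\mu(\bs z)^{1/2}$ factor from the unstable direction. The matching lower bound comes from Thomson's principle with a divergence-free test flow concentrated near $\bs z$ and built from the linearized gradient flow at $\bs z$, exactly as in Theorem~3.1 of \cite{begk3}. The conditions $\delta_N\ll N^{-3/4}$ and $N^{d+1}e^{-N\delta_N}\to 0$ ensure that $D_{\bs z}$ lies in a small ball around $\bs z$, so that neighborhoods of distinct saddles in $\mf S_a$ do not interact. Dividing $C_N(\bs z)$ by $\sum_{\bs z'\in\mf S_a} C_N(\bs z')$, whose asymptotics come from Theorem \ref{vs05} applied to the singleton $A=\{a\}$, yields the desired ratio $\omega(\bs z)/\sum_{\bs z'\in\mf S_a}\omega(\bs z')$.

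The main obstacle is precisely this localized capacity estimate. The test function and test flow near $\bs z$ must be arranged so that (i) the Dirichlet condition on $\ms D_a\setminus \ms D_{\bs z}$ does not truncate a non-negligible portion of the capacity contributed by $\bs z$ alone, and (ii) Laplace's method reproduces exactly $\omega(\bs z)$ without spurious contributions from other saddles. Both requirements are secured by the geometry of the hyperplane $D_{\bs z}$ from \eqref{v52}, which approximates the stable manifold of $\bs z$: combined with the smallness of $\delta_N$, this confines the effective traversal of $D_{\bs z}$ to a neighborhood of $\bs z$ of diameter $O(\sqrt{\delta_N})$, disjoint from the analogous neighborhoods of the other saddles.
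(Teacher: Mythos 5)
Your proposal takes a genuinely different route from the paper's. The paper does not estimate a per-saddle ``partial capacity''; instead it proves a local symmetry result at each saddle (Proposition~\ref{vs23}: started from $\ms D_{\bs z}$, the chain exits a mesoscopic box around $\bs z$ through either face $\partial_\pm B_N$ with probability $\tfrac12+o_N(1)$, established by a diffusion approximation via Dynkin's formula in Lemmas~\ref{vs24}--\ref{vs25}) and combines this with jump-rate identities for the trace process on the finite set $\{\bs x^c_N : c\in S\}$, using \cite[Lemma 6.8]{bl2}, Remark~\ref{vs21} and \eqref{v42}; the case of several saddles in $\mf S_{a,b}$ is handled by modifying $F$ outside $W_a$ so that each saddle becomes the unique one between $W_a$ and a freshly created well. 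Your capacity-splitting strategy is conceptually appealing and would dispatch the multi-saddle case without modifying $F$, but two of its steps are asserted rather than proved, and both are genuine gaps.

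The uniform equidistribution $h_{\bs z}(\bs x)-h_{\bs z}(\bs y)=o_N(1)$ over $\bs x,\bs y\in\ms E^a_N$ is non-trivial: $\ms E^a_N$ is a macroscopic set containing points at all energies from $h_a$ up to $H_i-\epsilon$, and one must show the splitting probability is asymptotically independent of the start, which requires a quantitative thermalization estimate (coupling or spectral gap for the restricted chain) that neither your sketch nor the paper supplies --- the paper avoids the issue entirely by working with fixed representative points $\bs x^c_N$ and the trace process. More seriously, the ``partial capacity'' $C_N(\bs z)$ is \emph{not} a capacity: the defining event $\{H_{\ms D_{\bs z}}<H^+_{\ms E^a_N}\wedge H_{\ms D_a\setminus\ms D_{\bs z}}\}$ involves three disjoint sets, so neither the Dirichlet nor the Thomson variational principle applies to it directly, contrary to what your sketch assumes. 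One can write $C_N(\bs z)=\Cap_N\big(\ms E^a_N\cup(\ms D_a\setminus\ms D_{\bs z}),\ms D_{\bs z}\big)-\tilde C_N$ with $\tilde C_N=\sum_{\bs y\in\ms D_a\setminus\ms D_{\bs z}}\mu_N(\bs y)\lambda_N(\bs y)\,\mb P_{\bs y}\big[H_{\ms D_{\bs z}}<H^+_{\ms D_a\setminus\ms D_{\bs z}}\wedge H_{\ms E^a_N}\big]$, and estimate the first term variationally; but $\ms D_{\bs z}$ and $\ms D_a\setminus\ms D_{\bs z}$ both sit at energy $\approx H_i+\delta_N$ and are connected through the high-energy ring, so the cross-term $\tilde C_N$ lacks the $1/(2\pi N)$ saddle suppression and is not obviously negligible against the target $\omega(\bs z)(2\pi N)^{d/2}e^{-NH_i}/(2\pi N\,Z_N)$. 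Until you close these two steps (and note also that $\sum_{\bs z'}C_N(\bs z')$ equals $\Cap_N(\ms E^a_N,\ms D_a)$ only up to the contribution of $\Omega^2_N$, and that Theorem~\ref{vs05} with $A=\{a\}$ concerns $\Cap_N(\ms E^a_N,\cup_{b\ne a}\ms E^b_N)$, not $\Cap_N(\ms E^a_N,\ms D_a)$), the ``localized capacity estimate'' is a hole rather than a lemma.
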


The proof of Lemma \ref{vs22} yields the last result.

\begin{proposition}
\label{vs27}
Let $D\subset\Xi$ be a domain with a smooth boundary, and let $m
=:\inf_{\bs y\in\partial D} F(\bs y)$. Fix a sequence $\{\epsilon_N :
N\ge 1\}$ of positive numbers such that $\lim_N N^{d+1}$ $\exp\{-N
\epsilon_N\} =0$, and let $D_N = \Xi_N \cap D$, $B_N=\{\bs
x\in \partial D_N : F(\bs x) \le m + 2 \epsilon_N\}$.  Fix a point
$\bs x\in D$ such that $F(\bs x) < m$ and for which there exists a
continuous path $\bs x(t)$, $0\le t\le 1$, from $B_N$ to $\bs x$ such
that $F(\bs x(t)) \le m + \epsilon_N$ for all $0\le t\le 1$. Then,
\begin{equation*}
\lim_{N\to\infty} \mb P_{\bs x_N} \big[ H_{\partial D_N} = H_{B_N}
\big]\;=\; 1\;,
\end{equation*}
where $\bs x_N\in D_N$, $\Vert \bs x_N - \bs x\Vert\le 1/N$.
\end{proposition}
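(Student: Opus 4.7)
The plan is to bound the complementary probability
$p_N := \mb P_{\bs x_N}[H_{A_N} < H_{B_N}]$, where $A_N := \partial D_N \setminus B_N \subset \{F > m + 2\epsilon_N\}$, and show that it vanishes. The structural point is that $\bs x_N$ is connected to $B_N$ by a low-energy lattice corridor (the discretization of the given path, staying in $\{F\le m+\epsilon_N+O(1/N)\}$), whereas any $\Xi_N$-path from $\bs x_N$ to $A_N$ has to climb above $m+2\epsilon_N$. The $\epsilon_N$ energy gap, together with the hypothesis $N^{d+1}e^{-N\epsilon_N}\to 0$, will do the job.

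The first step is a reduction to a capacity ratio. Decomposing the chain into excursions from $\bs x_N$ and writing $p_A = \mb P_{\bs x_N}[H_{A_N}<H^+_{\bs x_N}\wedge H_{B_N}]$, $p_B = \mb P_{\bs x_N}[H_{B_N}<H^+_{\bs x_N}\wedge H_{A_N}]$, one has $p_N = p_A/(p_A+p_B)$, while $\mu_N(\bs x_N)\lambda_N(\bs x_N)\,p_A \le \Cap_N(\{\bs x_N\},A_N)$ and $\mu_N(\bs x_N)\lambda_N(\bs x_N)(p_A+p_B) = \Cap_N(\{\bs x_N\},A_N\cup B_N) \ge \Cap_N(\{\bs x_N\},B_N)$ by monotonicity of capacity in the target. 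Hence
\[
p_N \;\le\; \frac{\Cap_N(\{\bs x_N\}, A_N)}{\Cap_N(\{\bs x_N\}, B_N)}\;.
\]

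To bound the numerator, fix $\delta\in(0,1/(d+1))$, pick $L=L_N$ a regular value of $F$ in a $O(1/N^2)$ neighbourhood of $m+(2-\delta)\epsilon_N$ (possible since $F$ has finitely many critical values), and apply Dirichlet's principle with the test function $g(\bs y)=\mb 1\{F(\bs y)<L\}$. This $g$ satisfies $g(\bs x_N)=1$ (because $F(\bs x)<m<L$) and $g\equiv 0$ on $A_N\subset\{F>L\}$. Only edges straddling $\{F=L\}$ contribute to the Dirichlet form, each by $\mu_N(\bs y)R_N(\bs y,\bs y')=Z_N^{-1}e^{-(N/2)(F(\bs y)+F(\bs y'))}\le CZ_N^{-1}e^{-NL}$ (using the Lipschitz bound $|F(\bs y)-L|\le C/N$ for neighbours on the level surface), and a tubular-neighbourhood count of the smooth $(d-1)$-manifold $\{F=L\}$ gives $O(N^{d-1})$ such edges, so
\[
\Cap_N(\{\bs x_N\},A_N)\;\le\;C\,N^{d-1}\,Z_N^{-1}\,e^{-NL}\;.
\]
For the denominator, apply Thomson's principle to the unit flow along a $\Xi_N$-discretization of $\bs x(t)$: a nearest-neighbour lattice path from $\bs x_N$ to $B_N$ of length $O(N)$ with vertex $F$-values at most $m+\epsilon_N+O(1/N)$, obtained (after replacing $\bs x(t)$ if necessary by a rectifiable, e.g.\ piecewise-linear, curve inside the open set $\{F<m+\epsilon_N+\eta\}$) by using the Lipschitz regularity from (H1). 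Each edge on this path has conductance at least $cZ_N^{-1}e^{-N(m+\epsilon_N)}$, so
\[
\Cap_N(\{\bs x_N\},B_N)\;\ge\;\frac{c}{N\,Z_N\,e^{N(m+\epsilon_N)}}\;.
\]
Combining, $p_N\le C'N^d e^{-N(1-\delta)\epsilon_N}$. The assumption $N^{d+1}e^{-N\epsilon_N}\to 0$ yields $N\epsilon_N\ge(d+1)\log N - o(1)$, and for $\delta<1/(d+1)$ this gives $(1-\delta)N\epsilon_N - d\log N\to+\infty$, so $p_N\to 0$.

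The main obstacle is the geometric bookkeeping on the two discretizations: producing the lattice path of length $O(N)$ that approximates $\bs x(t)$ while keeping $F$-values in the correct energy corridor (requiring the rectifiable replacement and the Lipschitz estimates for $F$), and verifying the $O(N^{d-1})$ count of level-set-crossing edges (requiring the choice of a regular value $L$ avoiding the finite set of critical values, together with a uniform tubular-neighbourhood bound). The exponential estimates themselves are essentially automatic once the capacity ratio has been set up.
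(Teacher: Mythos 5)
Your proposal is correct and follows the same overall strategy as the paper's own (implicit) proof, which the authors describe as ``the proof of Lemma \ref{vs22} yields the last result.'' Both reduce the exit probability to a capacity ratio, bound the numerator via the Dirichlet principle with an indicator test function, and bound the denominator via Thomson's principle with a unit flow along a low-energy lattice path. You derive the ratio $p_N\le \Cap_N(\{\bs x_N\},A_N)/\Cap_N(\{\bs x_N\},B_N)$ from scratch via the excursion decomposition, whereas the paper invokes \cite[Lemma 4.3]{bl9} to get the slightly sharper $p_N\le\Cap_N(\{\bs x_N\},A_N)/\Cap_N(\{\bs x_N\},A_N\cup B_N)$; both suffice here.

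The one place where you depart from the paper, and where your argument is more delicate than necessary, is the choice of test function for the Dirichlet bound. The paper uses the indicator of $A_N^c$ (i.e. of the complement of the high part $A_N=\partial D_N\setminus B_N$). This is cleaner: the edges that contribute are precisely the nearest-neighbour edges incident to $A_N$, a subset of the lattice boundary of the smooth domain $D$, so their number is $O(N^{d-1})$ without any further discussion, and the exponent $e^{-N(m+2\epsilon_N)}$ appears directly from $F>m+2\epsilon_N$ on $A_N$ combined with the Lipschitz estimate (H1). You instead take the indicator of a sub-level set $\{F<L_N\}$ with $L_N\approx m+(2-\delta)\epsilon_N$ chosen to be a regular value. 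The claim that the number of edges crossing $\{F=L_N\}$ is $O(N^{d-1})$ \emph{uniformly in $N$} is not automatic: $L_N\to m$, and if $m$ is a critical value of $F$ (which nothing in the hypotheses excludes), then $\|\nabla F\|$ need not be bounded below on $\{|F-L_N|\le C/N\}$, and the coarea/tube count acquires an $\epsilon_N$-dependent factor. As it stands this is a small gap. It is easily closed in two ways: either (a) replace your $g$ by the paper's indicator of $A_N^c$, which removes the issue entirely and also makes the $\delta$-splitting unnecessary (you get $p_N\le CN^d e^{-N\epsilon_N}\to 0$ directly from the hypothesis), or (b) keep the sub-level-set function but take $L_N$ within $O(1/N)$ of $m+2\epsilon_N$ so that even the trivial $O(N^d)$ edge count yields $p_N\le CN^{d+1}e^{-N\epsilon_N}\to 0$.

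The Thomson lower bound is fine, with the usual implicit assumption (shared with the paper's proof of Lemma \ref{vs22}) that the discretization of the given continuous path can be taken of polynomial length; any length $O(N^k)$ with $k\le 2$ is compatible with the final estimate once the numerator is handled as in (a) or (b) above.
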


We conclude this section with some comments.  Bianchi, Bovier and
Ioffe \cite{bbi1, bbi2} examined the metastable behavior of the
Curie-Weiss model with random external fields. In this case the
potential $F$ becomes a sequence of potentials $F_N$ which converges
to some function $F_\infty$. The authors assumed that the parameter of
the model, the distribution of the external field, were chosen to
guarantee that all wells do not have saddle points at the same
height. In this case, the metastable behavior of the chain consists in
staying for an exponential time in some well and then to jump to a
deeper well in which the chain remains trapped for ever.

To observe a metastable behavior similar to the one described in
Theorem \ref{vs07}, one has to tune the distribution of the external
field in a way that the wells associated to $F_\infty$ have more than
one saddle point at the same height.  In this case, however, the
metastable behavior might depend on the subsequence of $N$.

To illustrate this possibility, consider the following one-dimensional
example. Let $F_N$ be a sequence of potentials which converge
uniformly to a potential $F_\infty$. Fix two local maxima of
$F_\infty$, supposed to be at the same height, $F_\infty(\bs z) =
F_\infty(\bs z')$, and assume that the interval $(\bs z, \bs z')$ is a
well, $F_\infty (\bs x) < F_\infty(\bs z)$ for $\bs z<\bs x< \bs z'$.
Suppose also that $F_N$ has two local maxima $\bs z_N$, $\bs z'_N$
such that $\bs z_N\to \bs z$, $\bs z'_N\to \bs z'$, that $(\bs z_N,
\bs z'_N)$ is a well for $F_N$, and that there exists subsequences
$N'$ and $N''$ such that
\begin{equation*}
N' \big[ F_{N'}(\bs z'_{N'}) - F_{N'}(\bs z_{N'})\big] \;\le\; - \epsilon
\;, \quad
N'' \big[ F_{N''}(\bs z'_{N''}) - F_{N''}(\bs z_{N''})\big] \;\ge\; \epsilon
\end{equation*}
for some $\epsilon>0$. In this case, in view of the results presented
in this section, starting from a local minima in $(\bs z_N, \bs
z'_N)$, along the subsequence $N'$, almost surely the chain will
escape from $(\bs z_N, \bs z'_N)$ through a neighborhood of $\bs z'_N$,
while along the subsequence $N''$ almost surely it will escape from
$(\bs z_N, \bs z'_N)$ through a neighborhood of $\bs z_N$.

This is what happens for the Curie-Weiss model with an external field,
random or not, if there exist saddle points at the same height. For
the metastable behavior not to depend on particular subsequences, one
needs to impose some strong conditions on the asymptotic behavior of
the sequence $F_N$.

The article is divided as follows. In Section \ref{vsec02} we prove
the upper bound for the capacities appearing in the statement of
Theorem \ref{vs05} and in Section \ref{vsec01} the lower bound. In
Section \ref{vsec03} we prove Theorem \ref{vs04}, in Section
\ref{vsec05}, Theorem \ref{vs07}, and in Section \ref{vsec06}, Theorem
\ref{vs20}. 

\section{Upper bound for the capacities}
\label{vsec02}

We prove in this section the upper bound of Theorem \ref{vs05}.  The
proof is based on ideas of \cite{begk3, bbi1, bbi2} and on the
Dirichlet principle \cite[Proposition 3.1.3]{g1} which expresses the
capacity between two sets as an infimum of the Dirichlet form: for two
disjoint subsets $A$, $B$ of $\Xi_N$,
\begin{equation*}
\Cap_N (A,B) \;=\; \inf_f D_N(f)\;,
\end{equation*}
where the infimum is carried over all functions $f:\Xi_N \to \bb R$
such that $f(\bs x) = 1$, $\bs x\in A$, $f(\bs y)=0$, $\bs y\in B$,
and where $D_N(f)$ stands for the Dirichlet form of $f$,
\begin{equation*}
D_N(f) \;=\; \sum_{\bs x\in \Xi_N} f(\bs x) \, (-L_N f)(\bs x) \,
\mu_N(\bs x) \,=\, \frac 12 \sum_{\bs x, \bs y \in \Xi_N} 
\mu_N(\bs x) R_N(\bs x, \bs y) [f(\bs y) -f(\bs x)]^2 \;.
\end{equation*}

\begin{proposition}
\label{vs01} 
For every proper subset $A$ of $\{1, \dots, \ell\}$,
\begin{equation*}
\limsup_{N\to\infty}
\frac{Z_N }{(2\pi N)^{d/2}}  \, 2\pi N\, e^{ N  F (\bs z)}\,  
\Cap_N (\ms E_N(A), \ms E_N(A^c )) \;\le\;
\sum_{\bs z\in \mf S(A)} 
\,  \frac{\mu (\bs z)}{\sqrt{- \det {\rm Hess}\, F(\bs z)}} \;\cdot
\end{equation*}
\end{proposition}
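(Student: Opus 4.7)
The plan is to apply the Dirichlet principle
\begin{equation*}
\Cap_N(\ms E_N(A), \ms E_N(A^c)) \;=\; \inf_f D_N(f)
\end{equation*}
(the infimum being over $f$ with $f\equiv 1$ on $\ms E_N(A)$ and $f\equiv 0$ on $\ms E_N(A^c)$) and to exhibit a test function whose Dirichlet form attains the claimed bound asymptotically. Following the strategy of Bovier, Eckhoff, Gayrard and Klein \cite{begk3} and its discrete adaptations in \cite{bbi1, bbi2}, $f$ will be essentially constant (equal to $0$ or $1$) on the bulk of each well $W_a$, with all the Dirichlet mass concentrated in small neighborhoods of the saddle points in $\mf S(A)$, where $f$ interpolates along the unstable direction of $\text{Hess}\,F(\bs z)$. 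The proof then reduces to a saddle-by-saddle Gaussian computation.

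Concretely, fix a small constant $\delta>0$ and, for each $\bs z\in\mf S_i$, let $B_\delta(\bs z)$ be the open ball of radius $\delta$. By (H3), pick an orthonormal basis $\bs v_{\bs z}, \bs w^2_{\bs z},\dots,\bs w^d_{\bs z}$ diagonalizing $\text{Hess}\,F(\bs z)$, with $\bs v_{\bs z}$ associated to the unique negative eigenvalue $-\mu(\bs z)$, and write $\bs x-\bs z$ in local coordinates $(v, w_2,\dots,w_d)$. Outside $\bigcup_{\bs z\in\mf S_i} B_\delta(\bs z)$ the set $\widehat\varOmega^i$ splits into connected components, each containing exactly one well; I define $f$ to be $\mb 1\{a\in A\}$ on the component containing $W_a$. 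Inside $B_\delta(\bs z)$ with $\bs z\in\mf S(A)$ set $f(\bs x) = \phi_{\bs z}(v)$, where $\phi_{\bs z}$ is the minimizer of the one-dimensional variational problem
\begin{equation*}
\min_{\phi}\int_{-\delta}^{\delta} e^{-N\mu(\bs z) v^2/2}\, \phi'(v)^2\, dv
\end{equation*}
with boundary values $0$ and $1$ prescribed according to which side of the hyperplane $v=0$ lies in $\cup_{a\in A} W_a$. For $\bs z\in\mf S_i\setminus\mf S(A)$ both sides of $\bs z$ belong to the same class, so $f$ is kept constant inside $B_\delta(\bs z)$. Outside $\widehat\varOmega^i$, where the weight $e^{-NF}$ is exponentially smaller than $e^{-NH_i}$, $f$ can be extended so that it takes the correct boundary values on $\ms E_N(A)$ and $\ms E_N(A^c)$ without affecting the leading order.

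The Euler-Lagrange equation yields $\phi_{\bs z}'(v)=\pm C_N e^{-N\mu(\bs z)v^2/2}$ with $C_N\sim\sqrt{N\mu(\bs z)/(2\pi)}$ and minimal energy asymptotically equal to $\sqrt{N\mu(\bs z)/(2\pi)}$. To evaluate $D_N(f)$, use the symmetric form $\mu_N(\bs x)R_N(\bs x,\bs y)=Z_N^{-1}e^{-(N/2)[F(\bs x)+F(\bs y)]}$ and note that only bonds meeting $\bigcup_{\bs z\in\mf S(A)} B_\delta(\bs z)$ contribute non-trivially. Near a fixed $\bs z\in\mf S(A)$, Taylor expand $F$ to second order using (H1), approximate $f(\bs y)-f(\bs x)\simeq \phi_{\bs z}'(v_x)(\bs y-\bs x)\cdot\bs v_{\bs z}$ for nearest-neighbor bonds of length $1/N$, and sum over the $2d$ neighbors of each $\bs x$ to extract a factor $2/N^2$. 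Converting the sum on $\bs x\in\Xi_N\cap B_\delta(\bs z)$ into a Riemann integral (volume element $N^{-d}$) and evaluating the Gaussian integrals in the stable and unstable coordinates separately, the stable directions contribute $(2\pi/N)^{(d-1)/2}/\sqrt{\prod_{j\ge 2}\lambda_j(\bs z)}$ and the unstable coordinate contributes $\sqrt{N\mu(\bs z)/(2\pi)}$. Collecting the prefactors gives
\begin{equation*}
D_N(f)\Big|_{\bs z} \;=\; [1+o_N(1)]\, \frac{(2\pi N)^{d/2}}{Z_N}\cdot\frac{e^{-NH_i}}{2\pi N}\cdot \frac{\mu(\bs z)}{\sqrt{-\det\text{Hess}\,F(\bs z)}}
\end{equation*}
for the contribution of $\bs z$, and summing over $\bs z\in\mf S(A)$ gives the upper bound.

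The main technical obstacles concern the patching. One must ensure (i) that the interpolation between $\phi_{\bs z}$ and the constant values near $\partial B_\delta(\bs z)$ is carried out (e.g.\ by cutting off in the stable directions at a fixed scale, where $F$ already exceeds $H_i$ by a positive amount) so that the weight $e^{-N[F(\bs x)+F(\bs y)]/2}$ is exponentially negligible relative to the target order $e^{-NH_i}$ on the patching region, and similarly (ii) that bonds lying entirely outside $\bigcup B_\delta(\bs z)$ with $f(\bs x)\not=f(\bs y)$ can only occur where $F>H_i+c$ for some $c=c(\delta)>0$, so their contribution is exponentially suppressed. Finally (iii), the discretization error in passing from the Riemann sum to the Lebesgue integral has to be controlled uniformly on $B_\delta(\bs z)$ using the Lipschitz hypothesis (H1) on the second derivatives, which keeps the Taylor remainder negligible at the relevant scale $\Vert\bs x-\bs z\Vert=O(N^{-1/2})$ where the Gaussian integrals concentrate.
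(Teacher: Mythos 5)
Your proposal is correct and follows essentially the same route as the paper: Dirichlet principle, a test function which equals the one-dimensional Gaussian error-function profile $\phi_{\bs z}$ of the unstable coordinate near each saddle $\bs z\in\mf S(A)$, is constant on the bulk of each well, and a saddle-by-saddle Gaussian computation yielding the weight $\mu(\bs z)/\sqrt{-\det\,{\rm Hess}\,F(\bs z)}$. The one technical difference worth flagging is the scale of the neighborhood of each saddle. The paper uses a mesoscopic box $B^{\bs z}_N$ of $N$-dependent side $\varepsilon_N$ satisfying $N\varepsilon_N^3\to 0$ and $e^{-N\varepsilon_N^2}$ super-polynomially small; the first condition makes the cubic Taylor remainder $O(N\varepsilon_N^3)$ \emph{uniformly} $o(1)$ on the box, so that $D_N(f;B_N)$ can be replaced by the exact Gaussian Dirichlet form $D^{\bs z}_N(f)$ for any $f$ (Assertion \ref{vs14}), and the second makes the contributions from the sides of $B_N$ and from $\partial\ms U_N$ explicitly negligible (Assertions \ref{vs15}, \ref{vs16}). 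You instead use a fixed-radius ball $B_\delta(\bs z)$; then point (iii) of your discussion is stated inconsistently, because the Taylor remainder $O(N\|\bs x-\bs z\|^3)$ is \emph{not} uniformly small on $B_\delta(\bs z)$ when $\delta$ is fixed — at $\|\bs x-\bs z\|\sim\delta$ it is of order $N\delta^3$. The argument is salvageable, but you need to split $B_\delta(\bs z)$ into a core of radius $\lesssim N^{-1/2+\alpha}$, where the remainder is $o(1)$ and the Gaussian computation applies, and a tail, where the remainder is large but the Gaussian factor $e^{-cN\|\bs x-\bs z\|^2}$ dominates it provided $\delta$ is taken small depending on $C_1$ and the spectrum of ${\rm Hess}\,F(\bs z)$; this decomposition is not carried out in your sketch. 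Similarly, your patching discussion (i)--(ii) identifies the right requirement — the transition of $f$ from $1$ to $0$ must occur where $F>H_i+c$ — but the definition of $f$ outside $\widehat\varOmega^i$ is left implicit; the paper handles this cleanly by working inside an enlarged sublevel set $\ms U$ at height $H_i+\vartheta\varepsilon_N^2$, setting $V^A_N\equiv 1/2$ outside $\ms U$, and bounding the contribution of $\partial\ms U_N$ by $C_0N^{d-1}e^{-\vartheta N\varepsilon_N^2}\to 0$. Your version of this step would use a fixed height $H_i+c(\delta)$ and is in fact more strongly suppressed, but the topological claim that removing the balls disconnects the enlarged component into pieces meeting only one class of wells, needed to define $f$ consistently, still needs to be stated.
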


The proof of this proposition is divided in several lemmas. The main
point is that the capacities depend on the behavior of the function
$F$ around the saddle points of $F$.

Fix a saddle point $\bs z$ of $F$ and denote by $\bb M = ({\rm Hess}\,
F)(\bs z)$ the Hessian of $F$ at $\bs z$. Denote by $-\mu$ the
negative eigenvalue of $\bb M$ and by $0<\lambda_2 \le \cdots \le
\lambda_{d}$ the positive eigenvalues. Let $\bs v$, $\bs w^i$, $2\le
i\le d$, be orthonormal eigenvectors associated to the eigenvalues
$-\mu$, $\lambda_i$, respectively. We sometimes denote $\bs v$ by $\bs
w^1$ and $-\mu$ by $\lambda_1$.

Let $\bb V$ the $(d\times d)$-matrix whose $j$-th column is the vector
$\bs w^j$ and denote by $\bb V^*$ its transposition. Denote by
$\bb D$ the diagonal matrix whose diagonal entries are $\lambda_i$
so that $\bb M = \bb V \bb D \bb V^*$. Let $\bb D_\star$ be the matrix
$\bb D$ in which we replaced the negative eigenvalue $\lambda_1$ by
its absolute value $\mu$ and let
\begin{equation}
\label{v04}
\bb M_\star \;=\; \bb V \,\bb D_\star \,\bb V^*\;.
\end{equation}
Clearly, $\det \bb M = - \det \bb M_\star$.

Let $B_N = B^{\bs z}_N = \ms B^{\bs z}\cap \Xi_N$ be a mesoscopic
neighborhood of $\bs z$:
\begin{equation}
\label{v05}
\ms B^{\bs z} = \Big \{\bs x \in \Xi : 
\vert (\bs x - \bs z) \cdot \bs v \vert  \le \varepsilon_N 
\,,\, \max_{2\le j\le d} 
\vert (\bs x - \bs z) \cdot \bs w^j \vert  \le 
2 \,\sqrt{\mu/\lambda_j} \varepsilon_N \, \Big\}\;,
\end{equation}
where $N^{-1} \ll \varepsilon_N\ll 1$ is a sequence of positive
numbers to be chosen later. Unless needed, we omit the index $\bs z$
from the notation $B^{\bs z}_N$. Denote by $\partial B_N$ the outer
boundary of $B_N$ defined by
\begin{equation}
\label{v47}
\partial B_N = \{\bs x \in \Xi_N \setminus B_N : \exists\,
\bs y \in B_N  \text{ s.t. } \Vert\bs y - \bs x\Vert=N^{-1} \}\;,
\end{equation}
and let $\partial_- B_N$, $\partial_+ B_N$ be the pieces of the outer
boundary of $B_N$ defined by
\begin{equation*}
\begin{split}
& \partial_- B_N = \{\bs x \in \partial B_N :  
(\bs x - \bs z) \cdot \bs v < - \varepsilon_N \}\;, \\
&\qquad \partial_+ B_N = \{\bs x \in \partial B_N :  
(\bs x - \bs z) \cdot \bs v >  \varepsilon_N \}\;.
\end{split}
\end{equation*}

\subsection*{The Dirichlet forms in the sets $B_N$}
Denote by $D_{N}(f;B_N)$ the piece of the Dirichlet form of a function
$f:\Xi_N\to\bb R$ corresponding to the edges in the set $B_N$:
\begin{equation*}
D_{N} (f ; B_N)\;=\; 
\sum_{i=1}^d \sum_{\bs x\in B_N} \mu_N (\bs x)\, R_N (\bs x, \bs x +
\bs e_i)\, [f(\bs x + \bs e_i) - f (\bs x)]^2\;,
\end{equation*}
where $\{\mf e_1, \dots, \mf e_d\}$ is the canonical basis of $\bb
R^d$ and $\bs e_i = N^{-1} \mf e_i$.

Let $\mu^{\bs z}_N$ be the measure on $B_N$ given by
\begin{equation*}
\mu^{\bs z}_N  (\bs x) \;=\; \frac 1{Z_N}\,
e^{- N F(\bs z)}
\, e^{ - (1/2) N (\bs y \cdot \, \bb M \bs y)} \;,
\end{equation*}
where $\bs y = \bs x - \bs z$, and where $\bs v \cdot \bs w$
represents the scalar product between $\bs v$ and $\bs w$.  Denote by
$D^{\bs z}_N$ the Dirichlet form defined by
\begin{equation}
\label{v02}
D^{\bs z}_N  (f)\;=\; 
\sum_{i=1}^d \sum_{\bs x\in B_N} \mu^{\bs z}_N (\bs x)\, \,
[f(\bs x + \bs e_i) - f (\bs x)]^2\;.
\end{equation}
The next assertion follows from an elementary computation and from
assumption (H1).

\begin{asser}
\label{vs14}
For every function $f:\Xi_N\to \bb R$,
\begin{equation*}
D_{N} (f ; B_N)\;=\; \big[1+ O(\varepsilon_N) 
+ O(N\varepsilon_N^3) \big] \,  D^{\bs z}_N  (f)\;.
\end{equation*}
\end{asser}

\subsection*{The equilibrium potential} We introduce in this
subsection an approximation in the set $B_N$ of the solution to the
Dirichlet variational problem for the capacity. To explain the choice,
consider a one-dimensional random walk on the interval $I_N =
\{-K_N/N, \dots$, $(K_N-1)/N, K_N/N\}$ whose Dirichlet form $D_N$ is
given by
\begin{equation*}
D_N(f) \;=\; \sum_k e^{\mu N k^2} \, [f(k+N^{-1}) -
f(k)]^2\;,
\end{equation*}
where the sum is performed over $k\in I_N$, $k\not = K_N/N$.  An
elementary computation shows that the equilibrium potential $V(k/N) =
P_{k/N}[H_{K_N/N} < H_{-K_N/N}]$ is given by
\begin{equation*}
V(k/N) \;=\; \frac{\sum_{j=-K_N/N}^{(k-1)/N} e^{- \mu N j^2}}
{\sum_{j=-K_N/N}^{(K_N-1)/N} e^{- \mu N j^2}}\;\sim\;
\frac{\int_{-\infty}^{k/N}  e^{- \mu N r^2} \, dr}
{\int_{-\infty}^{\infty} e^{- \mu N r^2} \, dr}\;,
\end{equation*}
where the last approximation holds provided $\sqrt{N} \ll K_N \ll N$.

In view of the previous observation, let $f:\bb R\to\bb R_+$ be given
by
\begin{equation*}
f_N(r) \;=\; \frac{\int_{-\infty}^r e^{- (1/2) N \mu s^2} \, ds}
{\int_{-\infty}^\infty e^{- (1/2) N \mu s^2} \, ds}
\;=\; \sqrt{\frac{N\mu}{2\pi}} 
\int_{-\infty}^r e^{- (1/2) N \mu s^2} \, ds \;.
\end{equation*}
The function $V_N$ defined below is an approximation on the set $B_N$
for the equilibrium potential between $\partial_- B_N$ and $\partial_+
B_N$:
\begin{equation}
\label{v03}
V_N(\bs x) \;=\; V^{\bs z}_N(\bs x) \;=\;  f_N([\bs x - \bs z] \cdot \bs v)\;.
\end{equation}

\begin{asser}
\label{vs02}
Assume that $N^{-1/2}\ll\varepsilon_N \ll N^{-1/3}$. Then,
\begin{equation*}
\frac{Z_N }{(2\pi N)^{d/2}}   \, 2 \pi N \, e^{ N  F (\bs z)}\,  
D_{N}(V_N; B_N) \;=\; [1 + o_N(1) ]
\,  \frac{\mu}{\sqrt{- \det {\rm Hess}\, F(\bs z)}} \;\cdot
\end{equation*}
\end{asser}

\begin{proof}
By Assertion \ref{vs14}, it is enough to estimate $D^{\bs z}_N
(V_N)$. By definition of the Dirichlet form $D^{\bs z}_N$,
\begin{equation*}
Z_N \, e^{ N  F (\bs z)}\,  D^{\bs z}_N (V_N) \;=\; 
\sum_{i=1}^d \sum_{\bs x\in B_N} 
e^{ - (1/2) \, N\, (\bs y \cdot \bb M \, \bs y)}\,
[V_N(\bs x + \bs e_i) - V_N(\bs x)]^2\;,
\end{equation*}
where $\bs y = \bs x-\bs z$. Denote by $\bs v_1, \dots, \bs v_d$ the
coordinates of the vector $\bs v$ and recall that $\Vert \bs
v\Vert=1$.  Recall the definition of the matrix $\bb M_\star$
introduced in \eqref{v04}. Since $(\bs y \cdot \bb M \, \bs y) =
\sum_{1\le j\le d} \lambda_j (\bs y \cdot \bs w^j)^2$, by definition
of $V_N$ this sum is equal to
\begin{equation*}
\begin{split}
& [1 + o_N(1)] \, \frac{\mu}{2 \pi N}\, 
\sum_{i=1}^d \bs v^2_i \sum_{\bs x\in B_N} 
e^{ - (1/2) \, N \, (\bs y \cdot \bb M \, \bs y)}\,
e^{- \mu N (\bs y\cdot \bs v)^2} \\
&\quad =\; 
[1 + o_N(1)] \, \frac{\mu}{2 \pi N}
\sum_{\bs x\in B_N} 
e^{ - (1/2) \, N \, (\bs y \cdot \bb M_\star \, \bs y) }\;.
\end{split}
\end{equation*} 
Let $\bs w = \sqrt{N} \bs y = \sqrt{N}[\bs x - \bs z]$ so that $\bs w
\in N^{-1/2} \bb Z^d$, to rewrite the previous sum as
\begin{equation*}
[1 + o_N(1)] \, \frac{\mu}{2 \pi N}
\sum_{\bs w}  e^{ - (1/2) \, (\bs w \cdot \bb M_\star \, \bs w) } \;,
\end{equation*}
where the sum is performed over $\bs w$ such that $|\bs w\cdot \bs
v|\leq N^{1/2}\varepsilon_N$ and $| \bs w \cdot \bs w^j | \leq 2
\,\sqrt{\mu/\lambda_j} N^{1/2}\varepsilon_N$, $2\leq j \leq d$. Since,
by assumption, $N^{1/2}\varepsilon_N\uparrow\infty$, this expression
is equal to
\begin{equation*}
[1 + o_N(1)] \, \frac{\mu}{2 \pi N} 
N^{d/2} \int_{\bb R^d}
e^{ - (1/2) \, (\bs w \cdot \bb M_\star \bs w)} d\bs w \;.  
\end{equation*}
The previous integral is equal to $(2\pi)^{d/2} \{ \det \bb
M_\star\}^{-1/2} = (2\pi)^{d/2} \{ - \det \bb M\}^{-1/2}$, which
completes the proof of the assertion.
\end{proof}

We conclude the proof of Proposition \ref{vs01} extending the
definition of $V_N$ to the entire set $\Xi_N$ and estimating its
Dirichlet form.  We denote by $\partial^{\rm in} B_N$ the inner
boundary of $B_N$, the set of points in $B_N$ which have a neighbor in
$\Xi_N\setminus B_N$. Let $B^*_N$, $\partial^{\rm in}_\pm B_N$ be the
$(d -1)$-dimensional sections of the boundary $\partial B_N$,
$\partial^{\rm in} B_N$:
\begin{equation*}
\begin{split}
& B^*_N \;=\; \bigcup_{2\le j\le d} \big\{\bs x \in \partial B_N : | 
(\bs x - \bs z) \cdot \bs w^j | > 2
\sqrt{\mu/\lambda_j}\varepsilon_N \big\}\;, \\
& \qquad \partial^{\rm in}_\pm B_N = \{\bs x \in \partial^{\rm in} B_N :  
\exists \, \bs y\in \partial_\pm B_N \text{ s.t. } \Vert \bs y -\bs x
\Vert = N^{-1} \}\;.
\end{split}
\end{equation*}

\begin{asser}
\label{vs15}
For all $N$ sufficiently large,
\begin{equation*}
\inf_{\bs x\in B^*_N} F(\bs x) \;\ge\; F(\bs z) \;+\; \mu\, \varepsilon^2_N\;. 
\end{equation*}
\end{asser}
 
\begin{proof}
Indeed, by a Taylor expansion of $F$ around $\bs z$, for $\bs x
\in B^*_N$,
\begin{equation*}
F(\bs x) \;=\; F(\bs z) \;+\; (1/2) (\bs x-\bs z) \cdot \bb M
\, (\bs x- \bs z) \;+\; O(\varepsilon^3_N)\;.
\end{equation*}
The second term on the right hand side is equal to
$(1/2) \sum_{1\le j\le d} \lambda_j \, [(\bs x-\bs z)\cdot
\bs w^j]^2$.  Since $\lambda_1 = - \mu$, $\lambda_j>0$ for
$2\le j\le d$, and $\bs x$ belongs to $B^*_N$, for $N$ sufficiently
large the previous expression is bounded below by
\begin{equation*}
F(\bs z) \;+\; \frac {3\, \mu}2 \, \varepsilon^2_N 
\;+\; O(\varepsilon^3_N) \;\ge \;
F(\bs z) \;+\; \mu \, \varepsilon^2_N \;,
\end{equation*}
which proves the claim.
\end{proof}

Let $\vartheta=\min \{\mu(\bs z) : \bs z \in \mf S (A)\}$. Denote by
$\ms U$ the connected component of the set $\{\bs x\in \Xi : F(\bs x)<
F(\bs z) + \vartheta \, \varepsilon^2_N\}$ which contains a set $W_a$,
$a\in A$. The set $\ms U$ may be decomposed in disjoint sets. Recall
from \eqref{v05} the definition of the sets $\ms B^{\bs z}$, $\bs z
\in \mf S (A)$, and let $\ms V = \ms U \setminus \cup_{\bs z \in \mf S
  (A)} \ms B^{\bs z}$. Figure \ref{fig3} represents the sets $\ms U$
and $\ms B^{\bs z}$. By Assertion \ref{vs15}, the set $\ms V$ is
formed by several connected components separated by the sets $\ms B^{\bs
  z}$, $\bs z \in \mf S (A)$. In Figure~\ref{fig3}, for example, the
set $\ms V$ is composed of $4$ connected components.

Let $\ms U_N = \ms U \cap \Xi_N$, $\mf B^{\bs z}_N = \ms U_N\cap \ms
B^{\bs z}$, $\bs z \in \mf S (A)$, $\ms V_N = \ms U_N \setminus
\cup_{\bs z \in \mf S (A)} \mf B^{\bs z}_N$ so that
\begin{equation*}
\ms U_N \;=\; \ms V_N \,\cup\, \bigcup_{\bs z \in \mf S (A)} 
\mf B^{\bs z}_N\;.
\end{equation*}
Let $\ms V^A_N$ be the union of all connected components of $\ms V_N$
which contains a point in $W_a$, $a\in A$, and let $\ms V^B_N = \ms
V_N \setminus \ms V^A_N$.

\begin{figure}[htb]
  \centering
  \def\svgwidth{330pt}
  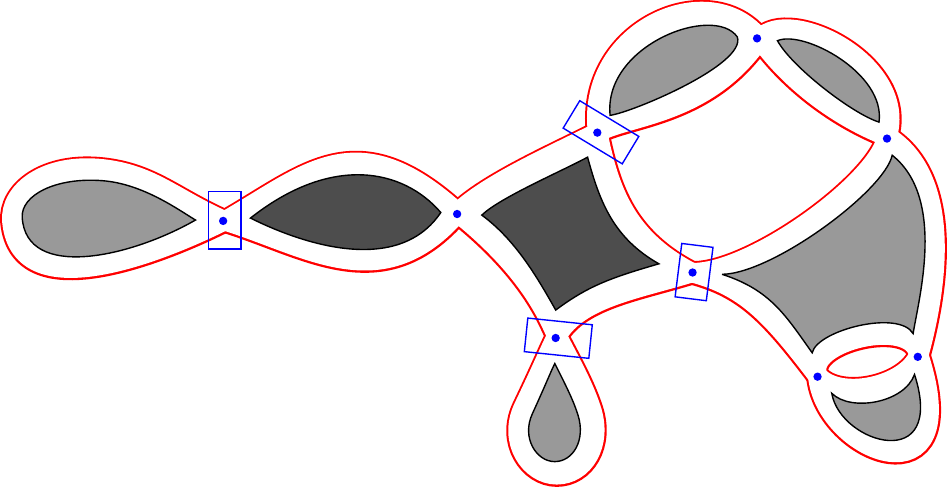
  \caption{In red the boundary of the set $\ms U$.  In dark gray the
    wells $W^\epsilon_b$, $b\in A$. In blue the boxes $\ms B^{\bs z}$,
    $\bs z \in \mf S(A)$.}
\label{fig3}
\end{figure}

For each $\bs z \in \mf S (A)$, choose an orthonormal basis of $({\rm
  Hess}\, F)(\bs z)$ in such a way that the eigenvector $\bs v (\bs
z)$ points to the direction of $\ms E_N(A)$.  Define $V^A_N: \Xi_N \to
[0,1]$ by
\begin{equation*}
V^A_N(\bs x) \;=\; 
\begin{cases}
0 & \bs x\in \ms V^B_N\;, \\
1 & \bs x\in \ms V^A_N  \;, 
\end{cases}
\qquad 
V^A_N(\bs x) \;=\; 
\begin{cases}
V_N^{\bs z}(\bs x) & \bs x\in \mf B^{\bs z}_N \;, \\
(1/2) & \text{otherwise},
\end{cases}
\end{equation*}
where $V_N^{\bs z}$ is the function defined in \eqref{v03}.

\begin{asser}
\label{vs16}
Let $\varepsilon_N$ be a sequence such that $N \varepsilon^3_N \to 0$,
$\exp\{- N \varepsilon^2_N\}$ converges to $0$ faster than any
polynomial. Then,
\begin{equation*}
\frac{Z_N }{(2\pi N)^{d/2}}  \, 2 \pi N\, e^{ N  F (\bs z)}\,  
D_N(V^A_N) \;\le\; [1+o_N(1)] \sum_{\bs z\in \mf S(A)} 
\,  \frac{\mu (\bs z)}{\sqrt{- \det {\rm Hess}\, F(\bs z)}} \;\cdot
\end{equation*}
\end{asser}

\begin{proof} 
We estimate the Dirichlet form of $V^A_N$ inside the sets $\mf B^{\bs
  z}_N$, $\bs z \in \mf S (A)$, at the boundary of $\ms U_N$,
and at the boundary of $B^{\bs z}_N$ which is contained in $\ms U_N$. 

Denote by $\partial \ms U_N$ the outer boundary of $\ms U_N$. The
contribution to the Dirichlet form $D_N(V^A_N)$ of the edges in
$\partial \ms U_N$ is less than or equal to
\begin{equation*}
\sum_{i=1}^d\sum_{\bs x\in\partial\ms U_N}\mu_N(\bs x)
[R_N(\bs x, \bs x + \bs e_i)+R_N(\bs x, \bs x - \bs e_i)]
\leq \frac{C_0}{Z_N}e^{-NF(\bs z)}\sum_{x\in\partial \ms U_N}
e^{-\vartheta N \varepsilon_N^2},
\end{equation*}
where $C_0$ denotes a finite constant which does not depend on $N$ and
whose value may change from line to line. The sum on the right hand
side is bounded by $C_0N^{d-1}e^{-\vartheta N \varepsilon_N^2}$, which
vanishes as $N\uparrow\infty$ in view of our choice of
$\varepsilon_N$.

Let $R^\pm_N(\bs z) = \partial^{\rm in}_\pm B^{\bs z}_N\cap \ms U_N$,
$\bs z \in \mf S(A)$. we estimate the contribution to the Dirichlet
form $D_N(V^A_N)$ of the edges in $R^-_N(\bs z)$, the one of
$R^+_N(\bs z)$ being analogous. By the definition of $V^A_N$ this
contribution is bounded by
\begin{equation}
\begin{split}
\label{v37}
& \sum_{i=1}^d\sum_{\bs x\in R^-_N(\bs z)}\mu_N(\bs x)\,
[R_N(\bs x, \bs x + \bs e_i)+R_N(\bs x, \bs x - \bs e_i)]\,
V^{\bs z}_N(\bs x)^2\;, \\
&\qquad \le\; 
\frac{C_0}{Z_N}e^{-NF(\bs z)}\sum_{\bs x\in R^-_N(\bs z)}
e^{-(1/2)N(\bs y \cdot \bb M^{\bs z} \bs y)}V^{\bs z}_N(\bs x)^2\; ,
\end{split}
\end{equation}
where $\bs y= \bs x - \bs z$. In the remainder of this paragraph we
omit the dependence on~$\bs z$ in the notation. Since $\bs x$ belongs
to $B_N$, $\exp\{-(1/2)N(\bs y \cdot \bb M \bs y)\}$ is less than or
equal to $\exp\{(1/2)\mu N\varepsilon^2_N\}\exp\{-(1/2)N\sum_{2\leq j
  \leq d}\lambda_j (\bs y \cdot \bs w ^j)^2\}$. On the other hand, by
a change of variables, 
\begin{equation*}
V_N(\bs x)^2=\Big(\frac 1{\sqrt{2\pi}} 
\int_{-\infty}^{(N \mu)^{1/2} (\bs y \cdot \bs v)} 
e^{- r^2/2} \, dr \Big)^2\;.
\end{equation*} 
Since $\bs x$ belongs to $\partial^{\rm in}_- B_N$, $\bs y \cdot \bs v
\le - \varepsilon_N + C_0 N^{-1}$. The previous expression is
therefore less than or equal to $(C_0/N\varepsilon_N^2) \exp\{-\mu
N\varepsilon_N^2\}$ because $\int_{(-\infty, A]} \exp\{-(1/2) r^2\} dr$
$\le |A|^{-1} \exp\{-(1/2) A^2\}$ for $A<0$.  This proves that the sum
appearing in \eqref{v37} is less than or equal to $C_0 N^{d-1}
\exp\{-(1/2)\mu N\varepsilon_N^2\}$, which vanishes as
$N\uparrow\infty$, in view of the definition of $\varepsilon_N$.

Since, for each $\bs z \in \mf S(A)$, the set $\mf B^{\bs z}_N$ is
contained in $B_N^{\bs z}$, the contribution to the Dirichlet form of
the bonds in the set $\mf B^{\bs z}_N$ is less than or equal to
$D_N(V_N; B^{\bs z}_N)$. To conclude the proof it remains to recall
Assertion \ref{vs02}.
\end{proof}

\section{Lower bound for the capacities}
\label{vsec01}

We prove in this section the lower bound of Theorem \ref{vs05}.  The
proof is based on the arguments presented in \cite{bbi1, bbi2}.

\begin{proposition}
\label{vs03} 
For every proper subset $A$ of $\{1, \dots, \ell\}$,
\begin{equation*}
\liminf_{N\to\infty}
\frac{Z_N }{(2\pi N)^{d/2}}  \, 2 \pi N\, e^{ N  F (\bs z)}\,  
\Cap_N (\ms E_N(A), \ms E_N(A^c )) \;\ge\;
\sum_{\bs z\in \mf S(A)} 
\,  \frac{\mu (\bs z)}{\sqrt{- \det {\rm Hess}\, F(\bs z)}} \;\cdot
\end{equation*}
\end{proposition}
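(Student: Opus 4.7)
I would prove the lower bound by the dual of the Dirichlet principle used in Section 2, namely Thomson's principle: for any unit flow $\phi$ from $\ms E_N(A)$ to $\ms E_N(A^c)$,
\begin{equation*}
\Cap_N(\ms E_N(A), \ms E_N(A^c)) \;\ge\; \frac{1}{\mc E_N(\phi)}\;, \qquad \mc E_N(\phi) \;=\; \frac 12 \sum_{\bs x,\bs y \in \Xi_N} \frac{\phi(\bs x,\bs y)^2}{\mu_N(\bs x)\, R_N(\bs x,\bs y)}\;,
\end{equation*}
where the flow is antisymmetric and satisfies Kirchhoff's law at every $\bs x \notin \ms E_N(A)\cup \ms E_N(A^c)$. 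Following the strategy of \cite{bbi1, bbi2}, the task reduces to exhibiting a test flow, concentrated on passages through the saddle boxes $\mf B^{\bs z}_N$, $\bs z \in \mf S(A)$, whose energy matches, up to $(1+o_N(1))$, the reciprocal of the right-hand side of Proposition \ref{vs03}.

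\textbf{Construction.} Fix nonnegative weights $\alpha_{\bs z}$ indexed by $\bs z\in\mf S(A)$ with $\sum_{\bs z}\alpha_{\bs z}=1$; these will be optimized last. Inside each $\mf B^{\bs z}_N$, define the local flow by $\phi^{\bs z}(\bs x, \bs x+\bs e_i) = \kappa_{\bs z}\,\alpha_{\bs z}\, \mu_N(\bs x)\, R_N(\bs x,\bs x+\bs e_i)\,[V_N^{\bs z}(\bs x+\bs e_i) - V_N^{\bs z}(\bs x)]$, where the normalizing constant $\kappa_{\bs z}$ is chosen so that the total flux across the section $(\bs z + \bb H)\cap \mf B^{\bs z}_N$ equals $\alpha_{\bs z}$. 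Since $V_N^{\bs z}$ depends only on $(\bs x-\bs z)\cdot \bs v$ and is essentially harmonic for the quadratic approximation of $F$ at $\bs z$ (as used in Assertion \ref{vs02}), this local flow is divergence-free in the interior of $\mf B^{\bs z}_N$, up to a negligible error coming from the cubic remainder of $F$ and from the lateral boundary $B^*_N$ (which is controlled by Assertion \ref{vs15}). The Gaussian computation of Assertion \ref{vs02} then yields
\begin{equation*}
\mc E_N(\phi^{\bs z}) \;=\; [1+o_N(1)]\; \frac{\alpha_{\bs z}^2}{\omega(\bs z)}\; \frac{Z_N}{(2\pi N)^{d/2}}\; 2\pi N \, e^{N H_i}\;,
\end{equation*}
where $\omega(\bs z) = \mu(\bs z)/\sqrt{-\det\mathrm{Hess}\,F(\bs z)}$.

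\textbf{Global extension and optimization.} Outside $\cup_{\bs z} \mf B^{\bs z}_N$, I extend $\phi$ to a divergence-free flow routed through the well bulks $\ms V^A_N$ and $\ms V^B_N$. For each well, one selects an arbitrary family of disjoint paths joining $\ms E_N^a$ to $\partial^{\rm in}_{\pm} \mf B^{\bs z}_N$ within the low-potential region $\{F<H_i - \epsilon\}$, and spreads the flux $\alpha_{\bs z}$ uniformly across an order-$N^{d-1}$ collection of them. Because the conductances $\mu_N(\bs x)R_N(\bs x, \bs y)$ in the bulk are exponentially larger than at the saddles (being of order $Z_N^{-1}e^{-N(H_i-\epsilon)}$ vs.\ $Z_N^{-1}e^{-NH_i}$), the energy of the extended flow outside $\cup_{\bs z}\mf B^{\bs z}_N$ is $O(e^{-N\epsilon/2})$ times the saddle contribution and thus negligible. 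Summing over saddles and optimizing $(\alpha_{\bs z})$ by Cauchy--Schwarz with $\sum_{\bs z}\alpha_{\bs z}=1$, the minimum of $\sum_{\bs z} \alpha_{\bs z}^2/\omega(\bs z)$ equals $1/\sum_{\bs z}\omega(\bs z)$, attained at $\alpha_{\bs z}=\omega(\bs z)/\sum_{\bs z'}\omega(\bs z')$. Thomson's inequality then delivers the claim.

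\textbf{Main obstacle.} The delicate point is the global extension: one must (i) match the boundary data of the local flows at the $(d-1)$-dimensional sections $\partial^{\rm in}_\pm \mf B^{\bs z}_N$ without disturbing Kirchhoff's law, (ii) produce a genuinely divergence-free unit flow connecting $\ms E_N(A)$ to $\ms E_N(A^c)$ through possibly several saddles simultaneously, and (iii) verify that all bulk and boundary contributions to the flow energy are $o(Z_N^{-1}(2\pi N)^{d/2}/(2\pi N)\cdot e^{-NH_i})$. The exponential potential gap between the wells and the saddles provides the slack, but the selection of a coherent family of paths across $\ms V^A_N\cup \ms V^B_N$, and the careful treatment of the cubic corrections to $F$ near $\bs z$ that induce a small divergence of $\phi^{\bs z}$, are the steps that require genuine work.
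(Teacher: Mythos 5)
Your strategy — Thomson's principle, local flows concentrated in the saddle boxes, a bulk extension whose energy is exponentially negligible, and a convex combination optimized by Cauchy--Schwarz to produce $\sum_{\bs z}\omega(\bs z)$ — is the same as the paper's. The local flows you propose, $\phi^{\bs z}=\kappa_{\bs z}\alpha_{\bs z}\, c(\bs x,\bs y)[V^{\bs z}_N(\bs y)-V^{\bs z}_N(\bs x)]$, are the same type of object as the paper's test flow \eqref{v07}, and the per-saddle energy estimate matches Assertion \ref{vs02}. The optimization $\alpha_{\bs z}\propto\omega(\bs z)$ is exactly Step 2 of the paper's proof.

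However, there is a genuine gap at the very point you label ``the main obstacle'': Thomson's principle requires an \emph{exactly} divergence-free flow, and you cannot use a flow that is only ``divergence-free up to a negligible error.'' Even with the quadratic approximation of $F$, the function $V^{\bs z}_N(\bs x)=f_N((\bs x-\bs z)\cdot\bs v)$ is $L_N$-harmonic only in the continuum; on the lattice, unless $\bs v$ is a coordinate vector, the discrete divergence of $\phi^{\bs z}$ at each site is of order $\varepsilon_N^2$ relative to $\phi^{\bs z}$ (this is precisely \eqref{v11}), and over the $K_N\sim\varepsilon_N^{-1}$ layers of the box these errors would accumulate if left untreated. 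The paper resolves this by building an explicit corrector flow $R$ recursively along ``generations'' $G_0,G_1,\dots$, proving the bound \eqref{v13} so that $\Vert R\Vert\lesssim\varepsilon_N\Vert\Phi\Vert$ and hence $\Psi=\Phi+R$ is genuinely divergence-free with the same leading-order energy. A second issue your sketch overlooks is geometric: when $\bs v$ is not aligned with a lattice direction, a flow pushed along the positive coordinates of $\bs v$ exits a rectangular box laterally, not through $\partial_+B_N$; the paper handles this by replacing $B_N$ by the cone $Q_N$ built from the translates $Q^{\bs x}_N$ rooted on $\partial^{\rm in}_-B_N$, and checking \eqref{v12} that the lateral spread stays $O(\varepsilon_N)$. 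Without these two constructions — the corrector $R$ and the cone $Q_N$ — your argument produces only an approximately divergence-free flow on a domain whose boundary geometry is wrong, so Thomson's inequality does not apply. Your extension through the well bulks is essentially the paper's Subsection~1.E (the paper uses gradient-flow lines rather than an order-$N^{d-1}$ family of disjoint paths), and that part is fine modulo the usual energy bookkeeping.
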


The idea of the proof is quite simple. It is based on Thomson's
principle \cite[Proposition 3.2.2]{g1} which expresses the inverse of
the capacity as an infimum over divergence free, unitary flows. The
construction of a unitary flow from $\ms E_N(A)$ to $\ms E_N(A^c )$
will be done in two steps. We first construct a unitary flow from $\ms
E_N(A)$ to $\ms E_N(A^c )$ for each saddle point $\bs z\in \mf
S(A)$. Then, we define a unitary flow from $\ms E_N(A)$ to $\ms
E_N(A^c )$ as a convex combination of the unitary flows defined in the
first step.

\smallskip\noindent{\bf Step 1: Flows associated to saddle points.}
The main difficulty of the proof of Proposition \ref{vs03} consists in
defining unitary flows associated to saddle points. Fix $\bs z\in \mf
S(A)$ and two wells $W_a$, $W_b$ such that $a\in A$, $b\in A^c$, $\bs
z\in W_a\cap W_b$. Assume, without loss of generality, that all
coordinates of the vector $\bs v$ are non-negative.  Let $B_N$ be the
subset defined by
\begin{equation*}
B_N \;=\; \Big \{\bs x \in \Xi_N : 
\vert (\bs x - \bs z) \cdot \bs v \vert  \le \varepsilon_N 
\,,\, \max_{2\le j\le d} 
\vert (\bs x - \bs z) \cdot \bs w^j \vert  \le 
\varepsilon_N \, \Big\}\;,
\end{equation*}
where $\varepsilon_N$ is a sequence such that $N \varepsilon^3_N \to
0$, $\exp\{- N \varepsilon^2_N\}$ converges to $0$ faster than any
polynomial. Note that the definition of the set $B_N$ changed with
respect to the one of the previous section.

Keep in mind that we assumed $\bs v$ to be a vector with non-negative
coordinates. Denote by $N(\bs v)$ the set of positive coordinates of
$\bs v$, $N(\bs v) = \{ j : \bs v_j >0\}$. Let $Q^o_N$ be the cone
$Q^o_N = \{\bs x\in N^{-1} \bb Z^d : \bs x_j \ge 0 \,,\, j\in N(\bs v)
\text{ and } \bs x_j = 0 \,,\, j\not\in N(\bs v)\}$, and let $Q^{\bs
  x}_N$, $\bs x \in N^{-1} \bb Z^d$, be the cone $Q^o_N$ translated by
$\bs x$, $Q^{\bs x}_N = \{\bs x + \bs x' : \bs x' \in Q^o_N\}$.

Denote by $\partial^{\rm in}_- B_N$ the inner boundary of $B_N$,
defined as $\partial^{\rm in}_- B_N = \{\bs x\in B_N : \exists \, j
\text{ s.t. } [\bs x - \bs z -\bs e_j] \cdot \bs v <-
\varepsilon_N\}$. Denote by $Q^+_N$ the set of all cones with root in
$\partial^{\rm in}_- B_N$, $Q^+_N = \cup_{\bs x \in \partial^{\rm
    in}_- B_N} Q^{\bs x}_N$, and let
\begin{equation*}
Q_N \;=\; \big\{\bs x\in Q^+_N : (\bs x - \bs z) \cdot \bs v \le
\varepsilon_N \big\}\;.
\end{equation*}
Note that $B_N \subset Q_N$. Figure~\ref{fig4} represents the sets $B_N$, $Q_N$.

\begin{figure}[htb]
  \centering
  \def\svgwidth{150pt}
  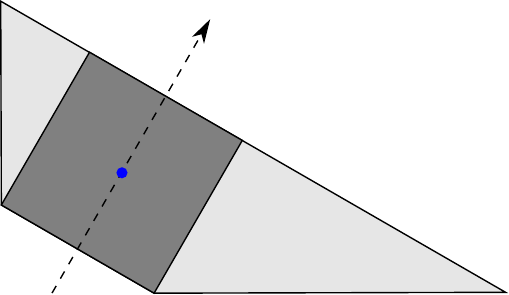
  \caption{The vector $\bs v$, the set $B_N$ in dark gray, and the set
    $Q_N$ in light gray. }
\label{fig4}
\end{figure}

There exists a finite constant $C_0$, independent of $N$, such that
for all $N\ge 1$,
\begin{equation}
\label{v12}
\max_{2\le k\le d} \max_{\bs x\in Q_N} |\, (\bs x - \bs z) \cdot \bs
w^k\,| \;\le\; C_0 \, \varepsilon_N\;. 
\end{equation}
Indeed, if $\bs x$ belongs to $Q_N$, $\bs x = \bs x' + \bs x''$, where
$\bs x'\in \partial^{\rm in}_- B_N$ and $\bs x'' \in Q^o_N$. On the
one hand, $\bs x' \in B_N$ so that $|\, (\bs x' - \bs z) \cdot \bs
w^k\,| \;\le\; \varepsilon_N$ for all $k$ and $N$.  On the other hand,
$\bs x'' \cdot \bs v = [\bs x - \bs z]\cdot \bs v - [\bs x' - \bs
z]\cdot \bs v$. The first term is bounded by $\varepsilon_N$ because
$\bs x$ belongs to $Q_N$. As $\bs x'\in B_N$, the second term is
absolutely bounded by $\varepsilon_N$. This proves that $\bs x''_j \le
C_0\, \varepsilon_N$ for all $j\in N(\bs v)$. The inequality holds
trivially for $j\not \in N(\bs v)$ from what we conclude that there
exists $C_0$ such that $\bs x''_j \;\le\; C_0 \, \varepsilon_N$ for
all $j$ and $N$. Assertion \eqref{v12} follows from this bound and
from the bounds obtained on $\bs x'$. \smallskip

Denote by $\partial B_N$ the external boundary of the set $B_N$, the
set of sites which do not belong to $B_N$ and which have a neighbor in
$B_N$: $\partial B_N = \{\bs x\not \in B_N : \exists\, j \text{ s.t. }
\bs x + \bs e_j \text{ or } \bs x - \bs e_j \in B_N\}$.  Two pieces of
the external boundary of $B_N$ play an important role in the proof of
the lower bound for the capacity. Denote by $\partial_\pm B_N$ the
sets
\begin{equation*}
\partial_- B_N = \Big \{\bs x \in \partial B_N : 
(\bs x - \bs z) \cdot \bs v < - \varepsilon_N  \Big\} \;, 
\quad \partial_+ B_N = \Big \{\bs x \in  \partial  B_N : 
(\bs x - \bs z) \cdot \bs v > \varepsilon_N \, \Big\} \;.
\end{equation*}

Denote by $\partial_+ Q_N$ the outer boundary of $Q_N$ defined by
$\partial_+ Q_N = \{\bs x \in \Xi_N : [\bs x - \bs z] \cdot \bs v >
\varepsilon_N \text{ and } \exists \, j \text{ s.t. } \bs x -\bs e_j
\in Q_N\}$.  We shall construct a divergence free, unitary flow from
$\ms E^a_N$ to $\partial_- B_N$, one from $\partial_- B_N$ to
$\partial_+ Q_N$ and a third one from $\partial_+ Q_N$ to $\ms
E^b_N$. The more demanding one is the flow from $\partial_- B_N$ to
$\partial_+ Q_N$.

\smallskip\noindent{\bf 1.A. Sketch of the proof.}  To explain the
idea of the proof of this part, we first consider the case where the
eigenvector $\bs v$ associated to the negative eigenvalue of
$({\rm Hess}\, F)(\bs z)$ is $\mf e_1$, the first vector of the
canonical basis. In this case the cone $Q^o_N$ introduced in the
previous section is just a ``straight line'': $Q^o_N = \{(k/N, 0,
\dots, 0) : k\ge 0\}$ and $\partial_+ Q_N$ coincides with $\partial_+
B_N$.

We know that the optimal unitary flow from $\partial_- B_N$to
$\partial_+ B_N$ is given by $\widehat{\Phi} (\bs x, \bs y) = c(\bs x,
\bs y) [\widehat{V} (\bs x) - \widehat{V}(\bs y)]/\Cap (\partial_-
B_N, \partial_+ B_N)$, where $c(\bs x, \bs y) = \mu_N(\bs x) \, R_N
(\bs x, \bs y)$ is the conductance between the vertices $\bs x$ and
$\bs y$ and $\widehat{V}$ is the equilibrium potential between
$\partial_- B_N$ and $\partial_+ B_N$.  We introduced in \eqref{v03}
an approximation $V$ of the equilibrium potential $\widehat{V}$. A
calculation shows that the flow $\Phi(\bs x, \bs y) = c(\bs x, \bs y)
[V(\bs x) - V(\bs y)]$ is almost constant along the $\bs v$
direction. Hence, in the case where $\bs v = \mf e_1$, a natural
candidate is a flow constant along the $\mf e_1$ direction.  Denote a
point $\bs x\in\Xi_N$ as $(\hat {\bs x},\check {\bs x})$ where $\hat
{\bs x}\in N^{-1} \bb Z$ and $\check {\bs x}\in N^{-1} \bb Z^{d-1}$,
and let $\check{B}_N = \{\check {\bs x} \in N^{-1} \bb Z^{d-1} :
\exists\, x\in N^{-1} \bb Z \text{ s.t. } (x,\check {\bs x}) \in
B_N\}$,
\begin{equation*}
\Phi(\bs x, \bs y) \;=\; 
\begin{cases}
\Phi(\check {\bs x}) & \text{if $\bs y = \bs x + \bs e_1$, $\bs x \in
  B_N \cup \partial_- B_N$,} \\
0 & \text{otherwise,}
\end{cases}
\end{equation*}
where $\Phi : \check{B}_N \to \bb R_+$ is such that $\sum_{\bs x\in
  \check{B}_N} \Phi(\check {\bs x}) =1$.

By Thomson's principle, the inverse of the capacity is bounded above by
the energy dissipated by the flow $\Phi$:
\begin{equation}
\label{v20}
\frac 1{\Cap_N (\partial_- B_N, \partial_+ B_N)} \;\le\; \Vert
\Phi\Vert^2 \;:=\;
\sum_{\bs x \in B_N \cup \partial_- B_N} \frac 1{c(\bs x, \bs x + \bs e_1)} \, 
\Phi(\bs x, \bs  x+ \bs e_1)^2\;,
\end{equation}
By definition of the flow and by a second order Taylor expansion, the
previous sum is equal to
\begin{equation*}
[1+ o_N(1) ] \, Z_N \, e^{NF(\bs z)}
\sum_{\bs x \in B_N \cup \partial_- B_N} e^{(N/2) \, (\bs y \cdot \bb M \bs y)}
\, \Phi(\check{\bs x})^2\;,
\end{equation*}
provided $N \varepsilon^3_N \to 0$. In this equation, $\bs y = \bs x -
\bs z$. Recall from \eqref{v04} the definition of the matrices $\bb
V$, $\bb D$. Let $\check{\bb D}$ be the diagonal matrix in which the
entry $\lambda_1=-\mu$ has been replaced by $0$, and let $\check{\bb
  M}$ be the symmetric matrix $\check{\bb M} = \bb V \check{\bb D} \bb
V^*$. In particular, for any vector $\bs y$, $\bs y \cdot \check{\bb
  M} \bs y = \sum_{2\le k\le d} \lambda_k (\bs y \cdot \bs w^k)^2$,
and $\bs y \cdot \bb M \bs y = \bs y \cdot \check{\bb M} \bs y - \mu
\hat{\bs y}^2$. With this notation, and since $\bs y \cdot \check{\bb
  M} \bs y$ depends on $\bs y$ only as a function of $\check{\bs y}$,
we may rewrite the previous sum as
\begin{equation*}
[1+ o_N(1) ] \, Z_N \, e^{NF(\bs z)}
\sum_{\check{\bs x}\in \check{B}_N} 
e^{(N/2) \, (\bs y \cdot \check{\bb M} \bs y)}
\, \Phi(\check{\bs x})^2 \sum_{k} e^{- \mu (N/2) k^2} \;,  
\end{equation*}
where the second sum is performed over all $k\in N^{-1}\bb Z$ such
that $-\varepsilon_N - N^{-1}\le k \le \varepsilon_N$.  The optimal
choice of $\Phi$ satisfying $\sum_{\bs x\in \check{B}_N} \Phi(\check
{\bs x}) =1$ is
\begin{equation*}
\Phi(\check {\bs x}) \;=\; e^{- (N/2) \, (\bs y \cdot
  \check{\bb M} \bs y)}/\sum_{\bs x\in \check{B}_N} e^{- (N/2)
  \, (\bs y \cdot \check{\bb M} \bs y)}\;.
\end{equation*}
With this choice the previous sum becomes
\begin{equation*}
[1+ o_N(1) ] \, Z_N \, e^{NF(\bs z)}
\frac{\sum_{k} e^{- \mu (N/2) k^2}}
{\sum_{\check{\bs x}\in \check{B}_N} 
e^{- (N/2) \, (\bs y \cdot \check{\bb M} \bs y)}} \;\cdot
\end{equation*}
At this point we may repeat the arguments presented at the end of the
proof of Assertion \ref{vs02} to conclude that the previous expression
is equal to
\begin{equation*}
[1+ o_N(1) ] \, Z_N \, e^{NF(\bs z)}
\frac{(2\pi N) \sqrt{- \det \bb M/\mu}}
{(2\pi N)^{d/2} \sqrt{\mu}} \;=\;
[1+ o_N(1) ] \, Z_N \, e^{NF(\bs z)}
\frac{(2\pi N) \sqrt{- \det \bb M}}
{(2\pi N)^{d/2} \mu} \;,
\end{equation*}

In conclusion, we constructed a divergence free, unitary flow $\Phi$
from $\partial_- B_N$ to $\partial_+ B_N$ whose dissipated energy,
$\Vert \Phi\Vert^2$, defined in \eqref{v20} satisfies
\begin{equation*}
\lim_{N\to\infty} \frac{(2\pi N)^{d/2}}{Z_N} \, 
\frac 1{2\pi N} \, e^{-NF(\bs z)} \, \Vert \Phi\Vert^2
\;=\; \frac{ \sqrt{- \det [({\rm Hess} \, F)(\bs z)] }} {\mu}  
\;\cdot
\end{equation*}

\smallskip\noindent{\bf 1.B. A unitary flow from $\partial_- B_N$ to
  $\partial_+ Q_N$.} We turn now to the general case. We learned from
the previous example that the optimal flow is $\Phi (\bs x, \bs y) =
M^{-1}_N \, c (\bs x, \bs y) [V(\bs x) - V(\bs y)]$, where $V$ is the
function introduced in \eqref{v03} and $M_N$ a constant which turns
the flow unitary. We thus propose the flow
\begin{equation}
\label{v07}
\Phi (\bs x, \bs x + \bs e_j) \;=\; 
\frac {\sqrt{ - \det \bb M/\mu}}{(2\pi N)^{(d-1)/2}}\, 
\bs v_j \, e^{-(N/2) \bs y \cdot \check{\bb M} \bs y}\;.
\end{equation}

We claim that $\Phi$ is an essentially unitary flow:
\begin{equation}
\label{v08}
\sum_{j=1}^d \sum_{\bs x\in \partial_{j,-} B_N} 
\Phi (\bs x, \bs x + \bs e_j) \;=\; [1+o_N(1)] \;,
\end{equation}
where $\partial_{j,-} B_N$ represents the set of points $\bs x \in
\partial_-B_N$ such that $\bs x + \bs e_j\in B_N$.  We have to show
that
\begin{equation}
\label{v09}
\sum_{j=1}^d \bs v_j \sum_{\bs x\in \partial_{j,-} B_N} e^{-(N/2) \bs y
  \cdot \check{\bb M} \bs y} \;=\; [1+o_N(1)]\,
\frac {(2\pi N)^{(d-1)/2}\, \sqrt{\mu}} {\sqrt{ - \det \bb M}}\;\cdot
\end{equation}
Fix $1\le j\le d$, and let $V = \{\bs x \in \bb R^d : [\bs x - \bs z]
\cdot \bs v = - \varepsilon_N\}$.  Denote by $\delta(\bs x)$, $\bs
x\in \partial_{j,-} B_N$, the amount needed to translate $\bs x$ in
the $\bs e_j$-direction for $\bs x$ to belong to $V$: $\bs x +
\delta(\bs x) \bs e_j \in V$. Observe that $\delta (\bs x)\in (0,1]$.
Let $T(\bs x) = \bs x + \delta(\bs x) \bs e_j$, $\bs
x\in \partial_{j,-} B_N$.  Since $\delta(\bs x)$ is absolutely bounded by
$1$,
\begin{equation*}
\sum_{\bs x\in \partial_{j,-} B_N} e^{-(N/2) \bs y
  \cdot \check{\bb M} \bs y}\;=\; [1+o_N(1)]\, 
\sum_{\bs x\in \partial_{j,-} B_N} \exp\big\{ -(N/2) \sum_{k=2}^d
\lambda_k \{ [T(\bs x) - \bs z] \cdot \bs w^k \}^2 \big\}
\end{equation*}
Replacing $\bs x$ by $\sqrt{N} \bs x$, and approximating the sum
appearing on the right hand side by a Riemann integral, the previous
term becomes
\begin{equation*}
\begin{split}
& [1+o_N(1)]\, \bs v_j\, N^{(d-1)/2} \prod_{k=2}^d 
\int_{-\sqrt{N} \varepsilon_N}^{\sqrt{N} \varepsilon_N}
e^{ -(1/2) \lambda_k r^2}\, dr \\
&\quad =\;
[1+o_N(1)]\, \bs v_j\, (2\pi N)^{(d-1)/2} 
\frac {\sqrt{\mu}}{\sqrt{ - \det \bb M}}\;,
\end{split}
\end{equation*}
where $\bs v_j$ appeared to take into account the tilt of the
hypersurface $V$. Multiplying the last term by $\bs v_j$ and summing
over $j$ we get \eqref{v09} because $\Vert \bs v\Vert =1$. This proves
that the flow $\Phi$ is essentially unitary, as stated in
\eqref{v08}. 

\smallskip\noindent{\bf 1.C. Turning the flow divergence free.}  In
this subsection, we add a correction $R$ to the flow $\Phi$ to turn it
divergence free.  We start with an estimate on the divergence of the
flow $\Phi$.  Denote by $(\text{div } \Phi) (\bs x)$ the divergence of
the flow $\Phi$ at $\bs x$:
\begin{equation*}
(\text{div } \Phi) (\bs x) \;=\; \sum_{j=1}^d 
\{ \Phi(\bs x, \bs x + \bs e_j) - \Phi(\bs x- \bs e_j,
\bs x)\}\;.
\end{equation*}
We claim that there exists a finite constant $C_0$, independent of
$N$, such that
\begin{equation}
\label{v11}
\max_{i\in N(\bs v)} \max_{\bs x\in Q_N} \Big| \, \frac{(\text{div }
  \Phi) (\bs x)} {\Phi(\bs x, \bs x + \bs e_i)}  \,\Big| 
\;\le \; C_0 \varepsilon^2_N\;.
\end{equation}

Fix $i\in N(\bs v)$, $\bs x\in Q_N$, and recall the definition of the
flow $\Phi$. By \eqref{v12}, by definition of the matrix $\check{\bb
  M}$ and by a second order Taylor expansion, for each $1\le i\le d$,
\begin{equation*}
\begin{split}
\sum_{j=1}^d \frac{\Phi(\bs x, \bs x + \bs e_j) - \Phi(\bs x- \bs
e_j, \bs x)}{\Phi(\bs x, \bs x + \bs e_i)} \; &=\;
\sum_{j=1}^d \frac{\bs v_j}{\bs v_i} \sum_{k=2}^d \lambda_k (\mf e_j
\cdot \bs w^k) ([\bs x - \bs z] \cdot \bs w^k) + O(\varepsilon^2_N) \\
& =\;
\frac{1}{\bs v_i} \sum_{k=2}^d \lambda_k (\bs v \cdot \bs w^k) 
([\bs x - \bs z] \cdot \bs w^k) + O(\varepsilon^2_N) \;.    
\end{split}
\end{equation*}
The first term on the right hand side vanishes because $\bs v$ is
orthogonal to $\bs w^k$, which proves \eqref{v11}. \smallskip

We now define a correction $R$ to the flow $\Phi$ to turn it
divergence free.  Let $G_0 = \partial_- B_N$, $G$ for generation.
Define recursively the sets $G_{k}$, $k\ge 1$, by
\begin{equation*}
G_{k+1} \;=\; \Big\{ \bs x\in Q_N : \bs
x-\bs e_j \in \bigcup_{\ell=0}^k G_\ell \cup Q^c_N
\text{ for all } j \in N(\bs v) \Big\}\;, \quad k\ge 0\;.
\end{equation*}
The first three generations are illustrated in Figure~\ref{fig5}.
Denote by $K_N$ the smallest integer $k$ such that $Q_N \subset
\cup_{1\le \ell\le k} G_\ell$. Clearly, $K_N \le C_0
\varepsilon^{-1}_N$ for some finite constant $C_0$.

\begin{figure}[htb]
  \centering
  \def\svgwidth{300pt}
  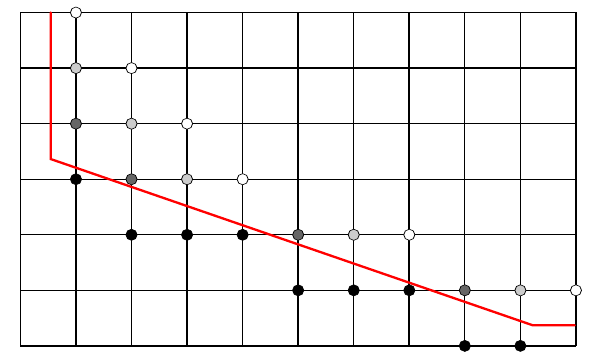
  \caption{The first three generations. The red line represents the
    boundary of the set~$Q_N$, and the numbers the generation of each
    point.}
\label{fig5}
\end{figure}


The flow $R$ is also defined recursively.  For all $\bs x\in G_1$,
define $R(\bs x- \bs e_j, \bs x)=0$, $1\le j\le d$, and let
\begin{equation}
\label{v14}
R(\bs x, \bs x + \bs e_j) \;=\; p_j \Big\{ 
\sum_{i=1}^d R(\bs x- \bs e_i, \bs x) \;-\;
(\text{div } \Phi) (\bs x) \Big\}\;,
\end{equation}
where $p_j= \bs v_j/\sum_{1\le i\le d} \bs v_i$. Note that $R(\bs x,
\bs x + \bs e_j) = 0$ if $j\not\in N(\bs v)$ and that we may restrict
the sum over $i$ to the set $N(\bs v)$. On the other hand, by
construction, $(\text{div } R) (\bs x) = - (\text{div } \Phi) (\bs x)$
for all $\bs x\in \cup_{1\le \ell\le K_N} G_\ell$.

There exists a finite constant $C_0$ such that 
\begin{equation}
\label{v13}
\max_{i\in N(\bs v)} \max_{\bs x\in G_k} \Big| \, 
\frac{ R (\bs x, \bs x + \bs e_i)} {\Phi(\bs x, \bs x + \bs e_i)}
\,\Big| \;\le \; C_0 \, k \, \varepsilon^2_N
\end{equation}
for all $1\le k\le K_N$.  This assertion is proved by induction. Since
$R(\bs x- \bs e_j, \bs x)=0$ for $\bs x\in G_1$, by \eqref{v11},
$\max_{i\in N(\bs v)} \max_{\bs x\in G_1} | \, R (\bs x, \bs x + \bs
e_i)/\Phi(\bs x, \bs x + \bs e_i) \,| \le C_1 \, \varepsilon^2_N$,
where $C_1$ is the constant $C_0$ appearing on the right hand side of
\eqref{v11}.

Suppose that $\max_{i\in N(\bs v)} \max_{\bs x\in G_j} | \, R (\bs x,
\bs x + \bs e_i)/\Phi(\bs x, \bs x + \bs e_i)| \le C_j \,
\varepsilon^2_N$ for all $1\le j\le k$, where $C_j$ is an increasing
sequence. Fix $i\in N(\bs v)$ and $\bs x\in G_{k+1}$. By definition of
$R$, by \eqref{v11}, and by the induction hypothesis,
\begin{equation*}
\Big| \, \frac{ R (\bs x, \bs x + \bs e_i)} {\Phi(\bs x, \bs x + \bs e_i)}
\,\Big| \;\le C_k \, \varepsilon^2_N \, p_i \sum_{j\in N(\bs v)} 
\frac{ \Phi (\bs x-\bs e_j, \bs x)} {\Phi(\bs x, \bs x + \bs e_i)}
\;+\; C_0 \, \varepsilon^2_N \;.
\end{equation*}
The computations performed to prove \eqref{v11} yield that the first
term on the right hand side is bounded by
\begin{equation*}
\begin{split}
& C_k \, \varepsilon^2_N \, p_i \sum_{j\in N(\bs v)} \frac{\bs v_j}{\bs
  v_i} \Big\{ 1 +\sum_{m=2}^d \lambda_m (\mf e_j \cdot \bs w^m) (\bs y
\cdot \bs w^m)  + C_0 \varepsilon^2_N \Big\}  \\
& \quad = \; C_k \, \varepsilon^2_N \, \big\{ 1 + C_0 \varepsilon^2_N
\big\} \;\le\; C_k \, \varepsilon^2_N \, e^{C_0 \varepsilon^2_N} \;.
\end{split}
\end{equation*}
The identity has been derived using the definition of $p_j$, the
orthogonality of $\bs v$ and $\bs w^k$, and summing first over $j$.
We have thus obtained the recursive relation $C_{k+1} \le [C_0 + C_k
e^{C_0 \varepsilon^2_N}]$ from which it follows that $C_k \le C_0 k
e^{C_0 k \varepsilon^2_N}$. Since $k\le K_N \le C_0
\varepsilon^{-1}_N$, \eqref{v13} holds.

\smallskip\noindent{\bf 1.D. A divergence free unitary flow.}  We
construct in this subsection a divergence-free, unitary flow from
$\partial_- B_N$ to $\partial_+ Q_N$ whose energy dissipated is given
by the right hand side of \eqref{v15}.

Let $\Psi$ be the flow from $\partial_- B_N$ to $\partial_+ Q_N$
defined by $\Psi = \Phi + R$, where $\Phi$ is introduced in
\eqref{v07} and $R$ in \eqref{v14}. By \eqref{v08} and by construction
of $R$, $\Psi$ is a unitary flow. Since $(\text{div } R) (\bs x) = -
(\text{div } \Phi) (\bs x)$ for all $\bs x\in \cup_{0\le \ell\le K_N}
G_\ell$, $\Psi$ is divergence-free. It remains to show that the energy
dissipated by $\Psi$ satisfies
\begin{equation}
\label{v15}
\begin{split}
& \sum_{j=1}^d \sum_{\bs x} \frac 1{c(\bs x, \bs x + \bs e_j)} \,
\Psi(\bs x, \bs x+\bs e_j)^2 \\
&\quad \;=\; [1+ o_N(1)]\, \frac {Z_N}{(2\pi N)^{d/2}} \, 2\pi N
\, e^{N F(\bs z)}\, \frac{ \sqrt{- \det [({\rm Hess} \, F)(\bs z)] }} {\mu} 
\;\cdot
\end{split}
\end{equation}

A second order expansion of $F(\bs x)$ at $\bs z$ taking advantage of
\eqref{v12} and of the fact that $N \varepsilon_N^3 \to 0$ permits to
write the left hand side of the previous equation as
\begin{equation*}
[1+ o_N(1)]\, Z_N\, e^{N F(\bs z)} \sum_{j=1}^d \sum_{\bs x}
e^{(N/2) (\bs y \cdot \bb M \cdot \bs  y)}  \, \Psi(\bs x, \bs x+\bs e_j)^2\;,
\end{equation*}
where, as before, $\bs y = \bs x - \bs z$. We may bound $\Psi(\bs x,
\bs x+\bs e_j)^2$ by $(1+\varepsilon_N) \Phi(\bs x, \bs x+\bs e_j)^2 +
(1+\varepsilon^{-1}_N) R(\bs x, \bs x+\bs e_j)^2$, and apply
\eqref{v13} together with the fact that $k\le K_N \le C_0
\varepsilon_N^{-1}$ to estimate the previous sum by $[1+
O(\varepsilon_N)] \Phi(\bs x, \bs x+\bs e_j)^2$. The previous displayed
equation is therefore equal to the same sum with $\Psi$ replaced by
$\Phi$. Replacing $\Phi(\bs x, \bs x+\bs e_j)$ by its value
\eqref{v07} the previous sum becomes 
\begin{equation*}
[1+ o_N(1)]\, Z_N\, e^{N F(\bs z)} \,
\frac 1{(2\pi N)^{d-1}}\,
\frac{ \det\bb M}{-\mu}
\sum_{j=1}^d \bs v_j^2 \sum_{\bs x}
e^{-(N/2) (\bs y \cdot \bb M_\star \cdot \bs y)}  \;,
\end{equation*}
where $\bb M_\star$ is the matrix introduced in \eqref{v04}. At this
point it remains to recall that $\bs v$ is a normal vector and to
repeat the calculations performed in the proof of the upper bound of
the capacity to retrieve \eqref{v15}.

\smallskip\noindent{\bf 1.E. A unitary flow from $\ms E^a_N$ to
  $\partial_- B_N$.} We extend in this section the flow $\Psi$ from
$\partial_- B_N$ to $\ms E^a_N$. The same arguments permit to extend
the flow $\Psi$ from $\partial_+ Q_N$ to $\ms E^b_N$.  The idea is
quite simple. For each bond $(\bs x, \bs x + \bs e_j)$, $\bs
x\in \partial_- B_N$, $\bs x + \bs e_j \in B_N$, we construct a path
of nearest neighbor sites $(\bs x = \bs x^0, \bs x^1, \dots, \bs
x^n)$, $\bs x^n\in \ms E^a_N$, from $\bs x$ to $\ms E^a_N$, and we
define the flow $\Psi_{\bs x, \bs e_j}$ from $\bs x$ to $\ms E^a_N$ by
$\Psi_{\bs x, \bs e_j} (\bs x^k,\bs x^{k+1}) = - \Psi (\bs x, \bs x +
\bs e_j)$. Adding all flows $\Psi_{\bs x, \bs e_j}$ we obtain a
divergence free, unitary flow from $\partial_- B_N$ to $\ms E^a_N$
whose dissipated energy is easily estimated.

We start defining the paths.  For $\bs y\in\bb R^d$, denote by $[\bs
y]$ the vector whose $j$-th coordinate is $[\bs y_j N]/N$, where $[a]$
stands for the largest integer less than or equal to $a\in\bb R$.  Fix
$\bs x \in \partial_- B_N$. Denote by $\bs x(t)$ the solution of the
ODE $\dot {\bs x}(t) = - \nabla F(\bs x(t))$ with initial condition
$\bs x(0) = \bs x$. Since $[\bs x-\bs z]\cdot \bs v <0$, $\bs x(t)$
converges, as $t\to\infty$, to one of the local minima of $F$ in
$W_a$.  Let $T = \inf\{t>0 : \bs x(t) \in W^o_a\}$, where $W^o_a$ is
an open set whose closure is contained in $W^\epsilon_a$, the set
introduced in \eqref{v16}. Let $\bs y^0 = \bs x, \bs y^1, \dots, \bs
y^m$ be the sequence of points in $\Xi_N$ visited by the trajectory
$[\bs x(t)]$, $0\le t\le T$. If necessary, add points to this sequence
in order to obtain a sequence $\bs x^0 = \bs x, \bs x^1, \dots, \bs
x^{m'}$ such that $\Vert \bs x^k - \bs x^{k+1}\Vert = N^{-1}$. Remove
from this sequence the loops and denote by $n$ the length of the
path. Since $F(\bs x(t))$ does not increase in time, and since for all
$k$ there exists some $0\le t\le T$ such that $\Vert \bs x^k - \bs
x(t)\Vert \le d/N$, there exists a finite constant $C_0$ such that
\begin{equation}
\label{v17}
F(\bs x^k) \;\le\; F(\bs x) \;+\; \frac{C_0}N \text{ for all } 0\le
k\le n\;. 
\end{equation}

Fix a bond $(\bs x, \bs x + \bs e_j)$, $\bs x\in \partial_- B_N$, $\bs
x + \bs e_j \in B_N$. Define the flow $\Psi_{\bs x, \bs e_j}$ from
$\bs x$ to $\ms E^a_N$ by $\Psi_{\bs x, \bs e_j} (\bs x^k,\bs x^{k+1})
= - \Psi (\bs x, \bs x + \bs e_j)$, $0\le k<n$. We claim that there
exists a finite constant $C_0$ and a positive constant $c_0$ such that
\begin{equation}
\label{v18}
\Vert \Psi_{\bs x, \bs e_j} \Vert^2 \;\le\; C_0 N Z_N e^{N F(\bs z)} 
e^{- c_0 N \varepsilon^2_N}\;.
\end{equation}
The proof of this assertion is simple.  Since $\Psi (\bs x, \bs x +
\bs e_j) = \Phi (\bs x, \bs x + \bs e_j)$ is given by \eqref{v07}, by
\eqref{v17},
\begin{equation*}
\Vert \Psi_{\bs x, \bs e_j} \Vert^2 \;\le\;
C_0 Z_N \sum_{k=0}^{n-1} e^{N F(\bs x^k)} \Phi (\bs x, \bs x + \bs e_j)^2\;\le\;
\frac{C_0  Z_N n}{N^{d-1}} e^{N F(\bs x)}  e^{-N (\bs y
  \cdot \check{\bb M} \bs y)} \;.
\end{equation*}
By a second order Taylor expansion, $\exp N \{ F(\bs x) - (\bs
  y \cdot \check{\bb M} \bs y) \}$ is less than or equal to
$C_0 \exp \{ N F(\bs z)\} \exp \{ -(1/2) \mu N \varepsilon^2_N\} $
because $N \varepsilon^3_N \to 0$ and $[\bs x-\bs z]\cdot \bs v < -
\varepsilon_N$. This proves \eqref{v18} because $n\le |\Xi_N|$.

Let $\Psi = \sum_{\bs x, j} \Psi_{\bs x, \bs e_j}$, where the sum is
carried over all $\bs x$, $j$ such that $\bs x\in \partial_- B_N$,
$\bs x + \bs e_j \in B_N$. $\Psi$ is a unitary, divergence free flow
from $\partial_- B_N$ to $\ms E^a_N$. Moreover, by Schwarz inequality
and by \eqref{v18},
\begin{equation*}
\Vert \Psi \Vert^2 \;\le\; M\, \sum_{\bs x, j} \Vert\Psi_{\bs x, \bs
  e_j}\Vert^2 \;\le\; C_0 N^{d+1} Z_N e^{N F(\bs z)} 
e^{- c_0 N \varepsilon^2_N}\;, 
\end{equation*}
where $M$ represents the number of flows $\Psi_{\bs x, \bs e_j}$.

Choosing $\varepsilon_N$ appropriately and juxtaposing the flow just
constructed with the one obtained in Section 1.D and a flow from
$\partial_+ Q_N$ to $\ms E^b_N$, similar to the one described in this
section, yields a divergence free, unitary flow from $\ms E^a_N$ to
$\ms E^b_N$, denoted by $\Phi_z$, such that
\begin{equation}
\label{v19}
\lim_{N\to\infty}
\frac {(2\pi N)^{d/2}}{Z_N }  \, 
\frac 1 {2\pi N}\, e^{ - N  F (\bs z)}\,
\Vert \Phi_z \Vert^2 \;=\; 
\frac {\sqrt{- \det [({\rm Hess}\, F) (\bs z)]}}{\mu (\bs z)}\;.
\end{equation}

\smallskip\noindent{\bf Step 2. Conclusion.}  Up to this point, for
each saddle point $z$ separating $\ms E_N(A)$ from $\ms E_N(A^c )$ we
constructed a divergence free, unitary flow $\Phi_z$ from $\ms E_N(A)$
to $\ms E_N(A^c )$ for which \eqref{v19} holds.  Denote the right hand
side of \eqref{v19} by $a(z)$ and observe that $F (\bs z)$ is constant
for $\bs z\in\mf S(A)$.

Let $\Phi$ be a convex combination of the previous flows: $\Phi =
\sum_{\bs z\in\mf S(A)} \theta_{\bs z} \Phi_{\bs z}$, where
$\theta_{\bs z}\ge 0$, $\sum_{\bs z\in\mf S(A)} \theta_{\bs z} =1$. By
construction, $\Phi$ is a flow from $\ms E_N(A)$ to $\ms E_N(A^c
)$. On the other hand, since the saddle points are isolated and since
the main contribution of the flow $\Phi_z$ occurs in a small
neighborhood of $z$
\begin{equation*}
\limsup_{N\to\infty}
\frac {(2\pi N)^{d/2}} {Z_N }  \, 
\frac 1{2\pi N}\, e^{ - N  F (\bs z)}\,
\Vert \Phi \Vert^2 \;\le\; 
\sum_{\bs z\in\mf S(A)} \theta^2_{\bs z} \, a (\bs z)\;.
\end{equation*}
The optimal choice for $\theta$ is $\theta_z = a (\bs
z)^{-1}/\sum_{\bs z'} a (\bs z')^{-1}$. With this choice the right
hand side of the previous equation becomes $(\sum_{\bs z\in\mf S(A)} a
(\bs z)^{-1})^{-1}$. Proposition \ref{vs03} follows from Thomson's
principle and from the previous bound for the flow $\Phi$.  \qed

\section{Proof of Theorem \ref{vs04}}
\label{vsec03}

Theorem \ref{vs04} follows from Propositions \ref{vs17} and \ref{vs18}
below. Throughout this section $1\le i\le i_0$ and $1\le j\le \ell_i$
are fixed and dropped from the notation. 

\begin{proposition}
\label{vs17}
For every disjoint subsets $A$, $B$ of $S$,
\begin{equation*}
\Cap_N (\ms E_N(A), \ms E_N(B) ) \; \le \; [1+o_N(1)]\,  
\frac{(2\pi N)^{d/2}}{Z_N }\,
\frac{e^{- N  H_i}}{2 \pi N} \, \Cap_{\bb G} (A,B)\;.
\end{equation*}
\end{proposition}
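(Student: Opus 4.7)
The plan is to apply Dirichlet's principle, which gives $\Cap_N(\ms E_N(A), \ms E_N(B)) \le D_N(f)$ for any $f:\Xi_N \to \bb R$ with $f \equiv 1$ on $\ms E_N(A)$ and $f \equiv 0$ on $\ms E_N(B)$, using as test function a discretization of the equilibrium potential on the finite graph $\bb G$. Let $V : S \to [0,1]$ be the unique function with $V \equiv 1$ on $A$, $V \equiv 0$ on $B$, and which is discrete harmonic on $S \setminus (A\cup B)$ with respect to the conductances $\bs c(a,b)$. By \cite[Proposition 3.1.2]{g1}, this $V$ minimizes the graph Dirichlet form, so
\begin{equation*}
\tfrac{1}{2}\sum_{a,b \in S} \bs c(a,b)\,[V(a) - V(b)]^2 \;=\; \Cap_{\bb G}(A,B)\;.
\end{equation*}

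I would then lift $V$ to a function on $\Xi_N$ by adapting the construction preceding Assertion \ref{vs16}. Fix $\varepsilon_N$ with $N\varepsilon_N^3 \to 0$ and $\exp(-N\varepsilon_N^2)$ decaying faster than any polynomial, let $\vartheta = \min\{\mu(\bs z) : \bs z\in \mf S\}$, and let $\ms U$ be the connected component of $\{\bs x\in\Xi : F(\bs x) < H_i + \vartheta\varepsilon_N^2\}$ containing $\varOmega$. For each saddle $\bs z \in \mf S$ set $\mf B^{\bs z}_N = \ms U_N \cap \ms B^{\bs z}$ with $\ms B^{\bs z}$ the mesoscopic box of \eqref{v05}. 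By Assertion \ref{vs15}, the complement $\ms V_N = \ms U_N \setminus \bigcup_{\bs z \in \mf S} \mf B^{\bs z}_N$ decomposes into connected components each lying entirely in a single well; denote by $\ms V^a_N$ the union of those lying in $W_a$. For each $\bs z \in \mf S_{a,b}$, orient the negative-eigenvalue eigenvector $\bs v(\bs z)$ of $({\rm Hess}\, F)(\bs z)$ so that it points from $W_b$ toward $W_a$, and set
\begin{equation*}
f_N(\bs x) \;=\;
\begin{cases}
V(a) & \bs x \in \ms V^a_N,\ a \in S, \\
V(b) + [V(a) - V(b)]\,V^{\bs z}_N(\bs x) & \bs x \in \mf B^{\bs z}_N,\ \bs z \in \mf S_{a,b},
\end{cases}
\end{equation*}
with $V^{\bs z}_N$ as in \eqref{v03}, and extend $f_N$ arbitrarily to $\Xi_N\setminus \ms U_N$. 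For $N$ large, $\ms E^a_N \subset \ms V^a_N$, so $f_N \equiv 1$ on $\ms E_N(A)$ and $f_N \equiv 0$ on $\ms E_N(B)$.

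The Dirichlet form $D_N(f_N)$ is then estimated edge by edge, following the three-part split in the proof of Assertion \ref{vs16}. Edges lying inside a single $\ms V^a_N$ contribute nothing because $f_N$ is constant there. Edges inside a saddle box $\mf B^{\bs z}_N$ with $\bs z \in \mf S_{a,b}$ contribute $[V(a) - V(b)]^2\, D_N(V^{\bs z}_N ; \mf B^{\bs z}_N)$, which by Assertion \ref{vs02} equals
\begin{equation*}
[1+o_N(1)]\,[V(a)-V(b)]^2\,\omega(\bs z)\cdot \frac{(2\pi N)^{d/2}}{Z_N\,(2\pi N)\,e^{NH_i}}\;.
\end{equation*}
The remaining edges are those crossing $\partial \ms U_N$ and those at the interface between $\mf B^{\bs z}_N$ and the adjacent $\ms V^a_N$, $\ms V^b_N$; both contribute only $O(N^{d-1}\,e^{-c N\varepsilon_N^2})$ times the factor above, by invoking Assertion \ref{vs15} for the former and the Gaussian-tail bound $\int_{-\infty}^{-A} e^{-r^2/2}\,dr \le A^{-1}e^{-A^2/2}$ for the mismatch between the constant values $V(a)$, $V(b)$ on $\ms V^a_N$, $\ms V^b_N$ and the values of $V^{\bs z}_N$ at $\partial^{\rm in}_\pm B^{\bs z}_N$. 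Summing over saddles and using $\sum_{\bs z \in \mf S_{a,b}} \omega(\bs z) = \bs c(a,b)$ yields
\begin{equation*}
D_N(f_N) \;=\; [1+o_N(1)]\,\frac{(2\pi N)^{d/2}}{Z_N\,(2\pi N)\,e^{NH_i}}\cdot \frac{1}{2}\sum_{a,b\in S} \bs c(a,b)\,[V(a) - V(b)]^2\;,
\end{equation*}
and the right-hand sum is exactly $\Cap_{\bb G}(A,B)$. Dirichlet's principle then gives the proposition.

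The main obstacle, as in Assertion \ref{vs16}, is controlling the interfacial mismatch between each saddle box and its two adjacent constant regions: at $\partial^{\rm in}_\pm B^{\bs z}_N$ the function $V^{\bs z}_N$ is not exactly $0$ or $1$, so $f_N$ differs slightly from the neighboring constants $V(b)$ and $V(a)$. The Gaussian tails of $V^{\bs z}_N$ and the lower bound on $F$ from Assertion \ref{vs15} keep this mismatch sub-polynomial, so it does not contaminate the leading order; the rest of the computation is a clean book-keeping that pairs each saddle contribution $\omega(\bs z)$ with the corresponding weight $[V(a) - V(b)]^2$ in the graph Dirichlet form.
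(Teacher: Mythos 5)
Your proposal is correct and follows essentially the same route as the paper: Dirichlet's principle with the test function obtained by lifting the graph equilibrium potential $V_{A,B}$ on $\bb G$ to $\Xi_N$ via the interpolants $V^{\bs z}_N$ on the mesoscopic saddle boxes, then bounding the Dirichlet form by the saddle-box contributions (yielding $\omega(\bs z)[V(a)-V(b)]^2$), with the boundary and interfacial terms controlled by Assertion \ref{vs15} and the Gaussian-tail bound. This is exactly the paper's construction of $V^{A,B}_N$ and the content of Assertion \ref{vs19}; the only cosmetic difference is your choice to define $\ms V^a_N$ as a union of components and your sign/orientation convention for $\bs v(\bs z)$, and the displayed Dirichlet-form estimate should be read as an upper bound (since $\mf B^{\bs z}_N\subseteq B^{\bs z}_N$), which is the direction needed.
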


The proof of this proposition is similar to the one of Proposition
\ref{vs01} up to Assertion \ref{vs15}. Denote by $\mf S_{i,j}$ the set
of all saddle points in $\Omega^i_j$, and recall the definition of the
set $\ms U_N$ introduced right after Assertion \ref{vs15}.  Let $\mf
B^{\bs z}_N = \ms U_N\cap \ms B^{\bs z}$, $\bs z \in \mf S_{i,j}$,
$\ms V_N = \ms U_N \setminus \cup_{\bs z \in \mf S_{i,j}} \mf B^{\bs
  z}_N$ so that
\begin{equation*}
\ms U_N \;=\; \ms V_N \,\cup\, \bigcup_{\bs z \in \mf S_{i,j}} 
\mf B^{\bs z}_N\;.
\end{equation*}
In contrast with Section \ref{vsec02}, we define a set $\mf B^{\bs
  z}_N$ around each saddle point $\bs z$.  By Assertion \ref{vs15},
the set $\ms V_N$ is formed by several connected components separated
by the sets $\mf B^{\bs z}_N$, $\bs z \in \mf S_{i,j}$. Let $\ms V^a_N$ be
the connected component of $\ms V_N$ which contains a point in $W_a$,
$a\in S$.

Fix two disjoint subsets $A$, $B$ of $S$ and denote by $V_{A,B}$ the
equilibrium potential between $A$ and $B$ for the graph $\bb G$.  Fix
a saddle point $\bs z\in \mf S_{i,j}$ and assume that $\bs z \in
W_a\cap W_b$. Recall the definition of the function $V^{\bs z}_N$
introduced in \eqref{v03} and assume without loss of generality that
$\partial_- B_N \cap W_a \not = \varnothing$ so that $\partial_+B_N
\cap W_b \not = \varnothing$. Define $W^{\bs z}_N : \mf B^{\bs z}_N
\to [0,1]$ as
\begin{equation*}
W^{\bs z}_N  (\bs x) \;=\; V_{A,B} (a) \;+\; [V_{A,B} (b) - V_{A,B}
(a)] \, V^{\bs z}_N  (\bs x)\;.
\end{equation*}
Let $V^{A,B}_N: \Xi_N \to [0,1]$ by
\begin{equation*}
V^{A,B}_N(\bs x) \;=\; 
\begin{cases}
V_{A,B}( a ) & \bs x\in \ms V^{a}_N \;,  \\
W^{\bs z}_N (\bs x) & \bs x\in \mf B^{\bs z}_N \;, \\
(1/2) & \text{otherwise}.
\end{cases}
\end{equation*}

\begin{asser}
\label{vs19} 
Let $\varepsilon_N$ be a sequence such that $N \varepsilon^3_N \to 0$,
$\exp\{- N \varepsilon^2_N\}$ converges to $0$ faster than any
polynomial.  Then,
\begin{equation*}
\frac{Z_N }{(2\pi N)^{d/2}}  \, 2\pi N\, e^{ N  F (\bs z)}\,  
D_N(V^{A,B}_N) \;\le\; [1+o_N(1)] \, D_{\bb G}(V_{A,B})\;,
\end{equation*}
where $D_{\bb G}(V_{A,B})$ represents the Dirichlet form of $V_{A,B}$
with respect to the graph $\bb G$.
\end{asser}

The proof of this assertion is similar to the one of Assertion
\ref{vs16}. Proposition~\ref{vs17} follows from the last assertion and
from the fact that $\Cap_{\bb G} (A,B) = D_{\bb G} (V_{A,B})$.

We conclude the section with the proof of the lower bound.

\begin{proposition}
\label{vs18}
For every disjoint subsets $A$, $B$ of $S$,
\begin{equation*}
\Cap_N (\ms E_N(A), \ms E_N(B) ) \; \ge \; [1+o_N(1)]\,  
\frac{(2\pi N)^{d/2}}{Z_N }\,
\frac{e^{- N  H_i}}{2 \pi N} \, \Cap_{\bb G} (A,B)\;.
\end{equation*}
\end{proposition}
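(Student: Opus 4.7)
The plan is to apply Thomson's principle to the random walk, constructing a unit flow from $\ms E_N(A)$ to $\ms E_N(B)$ by lifting the optimal unit flow on the finite graph $\bb G$ to the discretized potential landscape. The flow construction piggybacks on the saddle-point flows $\Phi_{\bs z}$ built in the proof of Proposition \ref{vs03}, and the energy computation reduces the problem to the corresponding optimization on $\bb G$.

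First I would invoke the Dirichlet--Thomson duality on $\bb G$: by \cite[Proposition 3.2.2]{g1} there exists an optimal unit flow $\phi^{\star}$ from $A$ to $B$ on $\bb G$ (explicitly $\phi^{\star}(a,b) = \bs c(a,b)[V_{A,B}(a) - V_{A,B}(b)] / \Cap_{\bb G}(A,B)$) satisfying $\sum_{(a,b)} \phi^{\star}(a,b)^2 / \bs c(a,b) = 1/\Cap_{\bb G}(A,B)$. For each oriented edge $(a,b)$ and each saddle $\bs z \in \mf S_{a,b}$, split the flow proportionally to its conductance contribution, setting
\begin{equation*}
c_{\bs z}(a,b) \;=\; \phi^{\star}(a,b) \, \frac{\omega(\bs z)}{\bs c(a,b)}\;,
\end{equation*}
where $\omega(\bs z)$ is the weight \eqref{v44}. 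By construction $\sum_{\bs z \in \mf S_{a,b}} c_{\bs z}(a,b) = \phi^{\star}(a,b)$.

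Next, for each saddle $\bs z \in W_a \cap W_b$ I would take the divergence-free, unit flow from $\ms E^a_N$ to $\ms E^b_N$ constructed in Section 1 of the proof of Proposition~\ref{vs03}, rescale it by the signed factor $c_{\bs z}(a,b)$ to obtain $\Psi_{\bs z}$, and then set $\Psi = \sum_{\bs z \in \mf S_{i,j}} \Psi_{\bs z}$, the sum ranging over saddles lying in $W_a \cap W_b$ for some $(a,b)$ with $\phi^{\star}(a,b) \neq 0$. The Kirchhoff relation for $\phi^{\star}$ at every vertex $c \notin A \cup B$ ensures that the net outflow of $\Psi$ from $\ms E^c_N$ vanishes; at vertices in $A$ (respectively $B$) the net outflow equals $1$ (respectively $-1$), since the $\Psi_{\bs z}$'s are each divergence-free except at their two source/sink sets. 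Thus $\Psi$ is a divergence-free unit flow from $\ms E_N(A)$ to $\ms E_N(B)$.

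For the energy estimate, by \eqref{v19} one has
\begin{equation*}
\Vert \Psi_{\bs z}\Vert^2 \;=\; [1+o_N(1)] \, c_{\bs z}(a,b)^2 \,
\frac{(2\pi N)^{d/2}}{Z_N} \, \frac{1}{2\pi N} \, e^{-N H_i} \, \frac{1}{\omega(\bs z)}\;.
\end{equation*}
Since each $\Psi_{\bs z}$ has its mass concentrated in the mesoscopic neighborhood $Q^{\bs z}_N$ of its saddle, and since the corrector/extension flows constructed in Sections 1.C and 1.E of the previous proof carry exponentially negligible energy, the cross-terms $\langle \Psi_{\bs z}, \Psi_{\bs z'}\rangle$ for $\bs z \neq \bs z'$ are subdominant (by a Cauchy--Schwarz bound against the near-saddle contribution of one and the extension of the other). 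Summing the diagonal contributions,
\begin{equation*}
\sum_{\bs z} \frac{c_{\bs z}(a,b)^2}{\omega(\bs z)} \;=\;
\sum_{(a,b)} \phi^{\star}(a,b)^2 \, \frac{1}{\bs c(a,b)^2} \sum_{\bs z \in \mf S_{a,b}} \omega(\bs z)
\;=\; \sum_{(a,b)} \frac{\phi^{\star}(a,b)^2}{\bs c(a,b)} \;=\; \frac{1}{\Cap_{\bb G}(A,B)}\;,
\end{equation*}
so $\Vert \Psi\Vert^2 \le [1+o_N(1)] \, \frac{(2\pi N)^{d/2}}{Z_N} \, \frac{e^{-NH_i}}{2\pi N} \, \frac{1}{\Cap_{\bb G}(A,B)}$. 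Thomson's principle then yields the claimed lower bound.

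The main obstacle is the last bookkeeping step: verifying that the superposition $\Psi = \sum_{\bs z} \Psi_{\bs z}$ does not inflate the energy through interference on shared edges. Two sources of overlap exist: the extension paths from $\partial_- B^{\bs z}_N$ to $\ms E^a_N$ (Section 1.E), which may share edges for different $\bs z$'s that neighbor the same well $W_a$; and the mesoscopic boxes $B^{\bs z}_N$, which are disjoint for distinct saddles since saddles are isolated. The first type of overlap is harmless because the extension flows carry energy of order $N^{d+1} e^{-c N \varepsilon_N^2}$, which is much smaller than the leading term $N^{1-d/2} e^{N H_i} Z_N^{-1}$; absorbing a factor of $1+\varepsilon_N$ via $(x+y)^2 \le (1+\varepsilon_N) x^2 + (1+\varepsilon_N^{-1}) y^2$, just as was done below \eqref{v15}, controls the cross-terms without affecting the leading constant.
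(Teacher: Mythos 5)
Your proposal is correct and follows essentially the same route as the paper's own proof: superposing the unit flows $\Phi_{\bs z}$ with coefficients $\varphi(a,b)\,\omega(\bs z)/\bs c(a,b)$ (the paper's $\varphi(a,b)\,\theta_k(a,b)$), verifying unitarity and vanishing divergence via Kirchhoff, computing the diagonal energy with \eqref{v19}, and dismissing cross-terms by the localization of each flow near its saddle. The only cosmetic difference is that you write the general multi-saddle case directly, whereas the paper first treats the single-saddle case and then indicates the extension with the same weights $\theta_k$.
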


\begin{proof}
Fix two disjoint subsets $A$, $B$ of $S$. We construct below a
divergence-free, unitary flow $\Psi$ from $\ms E_N(A)$ to $\ms
E_N(B)$. 

Recall that we denote by $V_{A,B}$ the equilibrium potential between
$A$ and $B$ in the graph $\bb G$.  Denote by $\varphi = \varphi_{A,B}$
the flow from $A$ to $B$ in the graph $\bb G$ given by $\varphi(a,b) =
\bs c(a,b) [V_{A,B} (a) - V_{A,B} (b)]/\Cap_{\bb G}(A,B)$, and observe
that $\varphi(a,b) = 0$ if $a$, $b$ belong to $A$ or if $a$, $b$
belong to $B$. By \cite[Proposition 3.2.2]{g1},
\begin{equation}
\label{v38}
\frac 1{\Cap_{\bb G}(A,B)} \; =\; \frac 12\, 
\sum_{a,b\in S} \frac 1{\bs c(a,b)}\, \varphi_{A,B}(a,b)^2 \; =:\; 
\Vert \varphi_{A,B} \Vert^2 \;.
\end{equation}

Assume first that each pair of wells has at most one saddle point
separating them, that is, assume that the sets $W_a\cap W_b$ are
either empty or singletons. In this case, each edge $(a,b)$ of the
graph $\bb G$ corresponds to a unique saddle point $\bs z$.

Denote by $\Phi_{a,b}$, $a\not = b\in S$, $\bs c(a,b)>0$, the flow
$\Phi_{\bs z}$ constructed just above \eqref{v19} from $\ms E^a_N$ to
$\ms E^b_N$, where $\bs z\in W_a\cap W_b$ is the saddle point
separating $W_a$ and $W_b$. Note that $\Phi_{a,b} \not = -
\Phi_{b,a}$. We may assume that the flow $\Phi_{a,b}$ is a flow from
$\bs x^a$ to $\bs x^b$, where $\bs x^c$, $c\in S$, are points in $\ms
E^c_N$.  Define the flow $\Psi$ by
\begin{equation*}
\Psi (\bs x, \bs y) \;=\; \sum_{a,b}  \varphi(a,b)
\, \Phi_{a,b} (\bs x, \bs y) \;, \quad \bs x\,,\, \bs y\,\in\,
\Xi_N\;, 
\end{equation*}
where the sum is carried out over all $a\not = b\in S$ such that
$\varphi(a,b)>0$.  We claim that $\Psi$ is a unitary, divergence-free
flow from $\ms E_N(A)$ to $\ms E_N(B)$.

Clearly,
\begin{equation*}
\sum_{\bs x\in \ms E_N(A), \bs y\not \in \ms E_N(A)}
\Psi (\bs x, \bs y) \;=\; \sum_{a,b}  \varphi(a,b)
\sum_{\bs x\in \ms E_N(A), \bs y\not \in \ms E_N(A)}
\, \Phi_{a,b} (\bs x, \bs y) \;.
\end{equation*}
The flows $\Phi_{a,b}$ which cross $\ms E_N(A)$ are the ones starting
or ending at $\ms E_N(A)$. Since, in addition, $\varphi(a,b)=0$ if
$a$, $b\in A$, and $\varphi(a,b)<0$ if $a\not\in A$, $b\in A$, the
previous expression is equal to
\begin{equation*}
\sum_{a\in A ,b\not\in A}  \varphi(a,b)
\sum_{\bs x\in \ms E_N(A), \bs y\not \in \ms E_N(A)}
\, \Phi_{a,b} (\bs x, \bs y) \; =\;
\sum_{a\in A ,b\not\in A}  \varphi(a,b)\;,
\end{equation*}
where the last identity follows from the fact that $\Phi_{a,b}$ is
a unitary flow from $\ms E^a_N$ to $\ms E^b_N$. As $\varphi$ is a
unitary flow from $A$ to $B$, the last sum is equal to $1$, proving
that $\Psi$ is unitary.

To prove that $\Psi$ is divergence-free, fix a site $\bs x \not\in
\{\bs x^c : c\in A \cup B\}$. If $\bs x \not \in\{ \bs x^c : c\in S
\setminus [A\cup B]\}$, $\Psi$ has no divergence at $\bs x$ because it
is the convex combination of flows which have no divergence at $\bs
x$. If $\bs x= \bs x^c$, $c\not\in A \cup B$, the flows $\Phi_{a,b}$,
$a$, $b\not = c$, have no divergence at $\bs x^c$, while the
divergence of $\Phi_{a,c}$ (resp. $\Phi_{c,a}$) at $\bs x^c$ is equal
to $-1$ (resp. $1$) because these flows are unitary and end
(resp. start) at $\bs x^c$. Therefore, the divergence of $\Psi$ at
$\bs x^c$ is equal to
\begin{equation*}
({\rm div}\, \Psi)(\bs x^c) \;=\; \sum_{a,b}  \varphi(a,b)
\, ({\rm div}\, \Phi_{a,b})(\bs x^c) \;=\; - \sum_{a: \varphi(a,c)>0}
\varphi(a,c) \;+\; \sum_{a: \varphi(c,a)>0} \varphi(c,a) \;.
\end{equation*}
Since $\varphi$ is a divergence-free flow in the graph $\bb G$, this
sum vanishes, which proves that $\Psi$ is also divergence-free at $\bs
x^c$, $c\in S \setminus [A\cup B]$.

We claim that the energy dissipated by the flow $\Psi$ is given by
\begin{equation}
\label{v40}
\Vert \Psi \Vert^2 \;=\; [1+o_N(1)]\, 
\frac {Z_N}{(2\pi N)^{d/2}} \, 2\pi N \, e^{ N H_i} 
\frac 1{\Cap_{\bb G}(A,B)} \;.
\end{equation}
Indeed, by definition,
\begin{equation*}
\Vert \Psi \Vert^2 \;=\; \sum_{j=1}^d \sum_{\bs x} \frac 1{c(\bs x, \bs x +
  \bs e_j)} \Psi(\bs x, \bs x + \bs e_j)^2 \;,
\end{equation*}
where the second sum is performed over all $\bs x\in \Xi_N$ such that
$\bs x+\bs e_j\in\Xi_N$. By definition of the flow $\Psi$, the
previous sum is equal to
\begin{eqnarray}
\label{v39}
\!\!\!&\!\!\!&\!\!\!\!
\sum_{a,b}  \varphi(a,b)^2 \sum_{j=1}^d \sum_{\bs x} \frac 1{c(\bs x, \bs x +
  \bs e_j)} \Phi_{a,b} (\bs x, \bs y)^2 \\
\!\!\!&\!\!\!&\!\!\! \;+\; \sum_{(a,b)\not = (a',b')}  \varphi(a,b) \varphi(a',b')
\sum_{j=1}^d \sum_{\bs x} 
\frac 1{c(\bs x, \bs x + \bs e_j)} \Phi_{a,b} (\bs x, \bs x + \bs e_j) 
\Phi_{a',b'} (\bs x, \bs x + \bs e_j) \;. \nonumber
\end{eqnarray}
By \eqref{v15}, the first line is equal to
\begin{equation*}
[1+ o_N(1)]  \, \frac {Z_N}{(2\pi N)^{d/2}} \, 2\pi N \, e^{N H_i} 
\sum_{a,b}  \varphi(a,b)^2
\, \frac{ \sqrt{- \det [({\rm Hess} \, F)(\bs z_{a,b})] }} {\mu(\bs z_{a,b})} \;,
\end{equation*}
where $\bs z_{a,b}$ stands for the saddle point in $W_a\cap W_b$ and
$-\mu(\bs z_{a,b})$ for the negative eigenvalue of $({\rm Hess} \,
F)(\bs z_{a,b})$. By \eqref{v22} and by \eqref{v38}, the previous
sum is equal to
\begin{equation*}
\sum_{a,b}  \frac 1{\bs c(a,b)} \, \varphi(a,b)^2 \;=\; 
\frac 1{\Cap_{\bb G}(A,B)}\;\cdot
\end{equation*}

We turn to the second line of \eqref{v39}. We have seen in the proof
of Proposition~\ref{vs07} that the contribution of the bonds which do
not belong to a mesoscopic neighborhood of the saddle point $\bs
z_{a,b}$ to the total energy dissipated by the flow $\Phi_{a,b}$ is
negligible. We may therefore restrict our attention in the second line
of \eqref{v39} to the points $\bs x$ which belong to one of these
neighborhoods. Since the flow $\Phi_{\bs z}$ vanishes in a
neighborhood of a saddle point $\bs z' \not = \bs z$, the product
$\Phi_{a,b} (\bs x, \bs y) \Phi_{a',b'} (\bs x, \bs y)$ vanishes for
all for $(a,b) \not = (a',b')$ and all $\bs x$ in a neighborhood of
some saddle point $\bs z$. In particular, the second line of
\eqref{v39} is of order 
\begin{equation*}
o_N(1)  \, \frac {Z_N}{(2\pi N)^{d/2}} \, 2\pi N \, e^{N H_i}  \;.
\end{equation*}
Assertion \eqref{v40} follows from the estimates of the two lines of
\eqref{v39}. 

Since $\Psi$ is a divergence-free unitary flow from $\ms E^N(A)$ to
$\ms E^N(B)$, by Thomson's principle, and by \eqref{v40},
\begin{equation*}
\frac 1{\Cap_{N}(\ms E^N(A),\ms E^N(B))} \;\le\; \Vert \Psi \Vert^2
\;=\; [1+o_N(1)]\, 
\frac {Z_N}{(2\pi N)^{d/2}} \, 2\pi N \, e^{ N H_i} 
\frac 1{\Cap_{\bb G}(A,B)} \;\cdot
\end{equation*}
This completes the proof of the proposition in the case where there is
at most one saddle point between two wells.

In the general case, one has to change the definition of $\Psi$ as
follows. For each $a$, $b\in S$ such that $\varphi(a,b)>0$, denote by
$\bs z^1_{a,b}, \dots, \bs z^n_{a,b}$ the set of saddle points between
$W_a$ and $W_b$: $W_a \cap W_b =\{\bs z^1_{a,b}, \dots, \bs
z^n_{a,b}\}$, where $n= n_{a,b}$. Set
\begin{equation*}
\Psi \;=\; \sum_{a,b} \varphi(a,b) \sum_{k=1}^n \, 
\theta_k(a,b) \, \Phi_{\bs z^k_{a,b}}\;, 
\end{equation*}
where the sum is carried out over all $a\not = b\in S$ such that
$\varphi(a,b)>0$, where $\Phi_{\bs z^k_{a,b}}$ is the flow constructed
just above \eqref{v19} from $\ms E^a_N$ to $\ms E^b_N$ passing through
the saddle point $\bs z^k_{a,b}$, and where
\begin{equation*}
\theta_k(a,b) \;=\;  \frac {\mu(\bs z^k_{a,b})}
{\sqrt{- \det [({\rm Hess} \, F)(\bs z^k_{a,b})] }} 
\, \frac 1{\bs c(a,b)} \;.
\end{equation*}
Note that $\sum_k \theta_k(a,b) = 1$. The arguments
presented above for the case where there is at most one saddle point
separating the wells can be easily adapted to the present case.
\end{proof}

\section{Proof of Theorem \ref{vs07}}
\label{vsec05}

According to \cite[Theorem 5.1]{l2}, Theorem \ref{vs07} follows from
Proposition \ref{vs09} below. 

Recall the notation introduced in Section \ref{vsec00}. Fix $1\le i\le
i_0$ and $1\le j\le \ell_i$, which are dropped out from the
notation. Fix a connected component $\varOmega = \varOmega^i_j$, $1\le
m\le n=n_{i,j}$ and denote by $\ms E_{m,N}$ the union of the wells
$\ms E^a_N$, $a\in S_m$, $\ms E_{m,N} = \cup_{a\in S_m} \ms E^a_N$.
As $m$ is fixed throughout this section, it will sometimes be omitted
from the notation.

Denote by $\{T_{m,N}(t) : t\ge 0\}$ the additive functional
\begin{equation*}
T_{m,N}(t) \;=\; \int_0^t \mb 1\{X_N(s) \in \ms E_{m,N}\}\, ds\;,
\end{equation*}
and by $S_{m,N}(t)$ its generalized inverse: $S_{m,N}(t) = \sup\{s\ge
0 : T_{m,N}(s) \le t\}$. The time-change process $X^{m, \rm T}_N(t) :=
X_N(S_{m,N}(t))$ is called the trace process of $X_N(t)$ on $\ms
E_{m,N}$. The process $X^{m, \rm T}_N(t)$ is a $\ms E_{m,N}$-valued,
continuous-time Markov chain. We refer to \cite{bl2} for a summary of
its properties.

Denote by $R^{m, \rm T}_N(\bs x, \bs y)$, $\bs x$, $\bs y\in \ms E_N$,
the jump rates of the trace process. According to \cite[Proposition
6.1]{bl2}, 
\begin{equation*}
R^{m, \rm T}_N(\bs x, \bs y) \;=\; \lambda_N (\bs x) \,
\mb P_{\bs x} \big[\,H^+_{\ms E_{m,N}} = H_{\bs y} \,\big]\,,\quad \bs
x\,,\, \bs y \in \ms E_{m,N}\,,\; \bs x\not=\bs y\,.
\end{equation*}
Denote by $r^m_N(a,b)$ the average rate at which the trace process jumps
from $\ms E^a_N$ to $\ms E^b_N$, $a$, $b\in S_m$:
\begin{equation}
\label{v28}
r^m_N(a,b) \;: =\; \frac 1{\mu_N(\ms E^a_N)} \sum_{\bs x\in \ms E^a_N} 
\mu_N(\bs x) \sum_{\bs y\in \ms E^b_N} R^{m, \rm T}_N(\bs x, \bs y)\;.
\end{equation}

Recall the definition of the projection $\Psi^m_N$ introduced in
\eqref{v25}. Denote by $\bs X^{m, \rm T}_N(t)$ the projection by
$\Psi^m_N$ of the trace process $X^{m, \rm T}_N(t)$, $\bs X^{m, \rm
  T}_N(t) = \Psi^m_N (X^{\rm T}_N(t))$.

\begin{proposition}
\label{vs09}
Fix $1\le i \le i_0$, $1\le j\le \ell_i$, $1\le m\le n_{i,j}$, $a\in
S_m$ and a sequence of configurations $\bs x_N$ in $\ms
E^{a}_N$. Under $\mb P_{\bs x_N}$, the time re-scaled projection of
the trace $\bb X^{m, \rm T}_N(t) = \bs X^{m, \rm T}_N(t \beta_m)$
converges in the Skorohod topology to a $S_m$-valued continuous-time
Markov chain $\bb X^m(t)$ whose jump rates are given by
\eqref{v23}. Moreover, in the time scale $\beta_m$, the time spent by
the original chain $X_N(t)$ outside $\ms E_{m,N}$ is negligible: for all
$t>0$, 
\begin{equation}
\label{v26}
\lim_{N\to\infty} \mb E_{\bs x_N} \Big[ \int_0^t \mb 1\{X_N(s\beta_m) 
\not \in \ms E_{m,N}\}\, ds \Big]\;=\; 0\;.
\end{equation}
\end{proposition}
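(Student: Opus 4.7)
The plan is to verify the standard criterion of \cite{bl2, bl7} for a sequence of trace processes to converge to a Markov chain, and then separately to control the time spent outside $\ms E_{m,N}$. The argument has three ingredients: (A) computation of the asymptotic mean jump rates $r^m_N(a,b)$; (B) a fast-mixing estimate inside each well $\ms E^a_N$ which makes the projected trace process asymptotically Markovian; and (C) the bound \eqref{v26}.

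For (A), I would start from the general version of \cite[Lemma 6.8]{bl2}, which yields an identity expressing $\mu_N(\ms E^a_N)\, r^m_N(a,b)$ as a symmetric combination of capacities of the form $\Cap_N(\ms E^{c}_N,\ms E_{m,N}\setminus \ms E^{c}_N)$ and $\Cap_N(\ms E^{a}_N\cup \ms E^{b}_N,\ms E_{m,N}\setminus(\ms E^{a}_N\cup \ms E^{b}_N))$, i.e.\ the analogue of \eqref{v21} with $\breve{\ms E}^{c}_N$ replaced by $\ms E_{m,N}\setminus\ms E^{c}_N$. Applying Theorem \ref{vs04} to each of these capacities converts this combination into the effective conductance $\bs c_m(a,b)$ of \eqref{v41}. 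Combining with \eqref{v24} and the definition $\beta_m = 2\pi N\,e^{N\theta_m}$ gives
\[
\beta_m\, r^m_N(a,b) \;\longrightarrow\; \frac{\bs c_m(a,b)}{\bs \mu (a)} \;=\; \bs r_m(a,b) \quad (a\in T_m), \qquad \beta_m\, r^m_N(a,b) \;\longrightarrow\; 0 \quad (a\in S_{m+1}),
\]
because $\hat\theta_a = \theta_m$ for $a\in T_m$ while $\hat\theta_a > \theta_m$ for $a\in S_{m+1}$.

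For (B), the input needed is an estimate of the form
\[
\max_{\bs x,\bs y \in \ms E^a_N} \frac{1}{\Cap_N(\{\bs x\},\{\bs y\})} \;\ll\; \mu_N(\ms E^a_N)\,\beta_m\,,
\]
which expresses that the chain equilibrates inside each well on a time scale much shorter than $\beta_m$. Since inside $W_a$ the potential $F$ stays below $h_a+\kappa$ for some $\kappa<\theta_m$, the left-hand side can be bounded via the Dirichlet principle using a test function built from local coordinates around the deepest minima in $W_a$, while the right-hand side has order $e^{N\theta_m}$ by \eqref{v24}. Given (A) and (B), the Skorohod convergence of $\bb X^{m,\rm T}_N$ to the Markov chain $\bb X^m$ with rates $\bs r_m$ follows from the general framework of \cite{bl2, bl7}; the points of $S_{m+1}$ are absorbing precisely because their outgoing rates vanish in the limit.

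For (C), using reversibility of $\mu_N$ together with the coupling to stationarity made possible by (B), one bounds the expected time spent outside $\ms E_{m,N}$ during $[0,t\beta_m]$ by $C t\beta_m\, \mu_N(\ms E_{m,N}^c)/\mu_N(\ms E_{m,N})$ plus a transient contribution. This ratio is exponentially small: the wild region between wells and the shallower wells in $S\setminus S_m$ carry $\mu_N$-measure at most $e^{-N\theta_m}\mu_N(\ms E_{m,N})$ times a polynomial factor, so after dividing by $\beta_m$ the bound \eqref{v26} follows. The main obstacle is step (B): for wells $W^i_{j,a}$ with $i>1$, which may contain several local minima separated by internal saddle points from $\mf S_1,\dots,\mf S_{i-1}$, the test function has to cross these internal barriers while achieving a Dirichlet form of the right order, and this is precisely the step where hypotheses (H1)--(H3) at all critical points enter.
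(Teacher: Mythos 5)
Your three-step structure mirrors the paper's proof, and step (A) matches Lemma \ref{vs11} exactly: it invokes \cite[Lemma 6.8]{bl2}, Theorem \ref{vs04}, and \eqref{v24} to compute $\beta_m r^m_N(a,b)$. But there are problems in (B) and a genuine gap in (C).

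In (B), the paper's Lemma \ref{vs10} formulates the fast-mixing hypothesis as the vanishing of the \emph{ratio}
$\Cap_N(\ms E^a_N, \cup_{b\in S_m,\,b\neq a}\ms E^b_N)/\Cap_N(\{\bs y\},\{\bs m_{a,1}\})$, which translates, after Theorem \ref{vs04} and \eqref{v24}, to $1/\Cap_N(\{\bs y\},\{\bs m_{a,1}\})\ll \beta_m/\mu_N(\ms E^a_N)$ --- not $\mu_N(\ms E^a_N)\beta_m$, which differs by the factor $\mu_N(\ms E^a_N)^2$ and is generally false. More importantly, to bound $1/\Cap_N$ from \emph{above} one needs a \emph{lower} bound on the capacity, which is obtained from Thomson's principle (by exhibiting a flow along a path $\gamma$ from $\bs y$ to $\bs m_{a,1}$ staying below level $H_i-\epsilon$), not from the Dirichlet principle with a test function, which only gives upper bounds on capacity. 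Once the principle is corrected, your worry about internal saddle points in multi-minimum wells is automatically handled: by the definition of $W^\epsilon_a$ the path $\gamma$ can be chosen so that $F(\bs x_k)\le H_i-\epsilon$ throughout, regardless of how many internal saddles it crosses, and this suffices for the estimate.

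In (C) there is a real gap. A stationary bound of the form $Ct\beta_m\,\mu_N(\ms E_{m,N}^c)/\mu_N(\ms E_{m,N})$ over all of $\Xi_N$ is not sufficient because $\ms E_{m,N}^c$ includes the entire complement of the connected component $\varOmega^i_j$, and in particular any deeper wells elsewhere in $\Xi$, whose $\mu_N$-measure may dominate that of $\ms E_{m,N}$. The paper handles this in two stages: first a large-deviations estimate shows that, up to time $T\beta_m$, the chain started from $\ms E_{1,N}$ does not leave a $\delta$-enlarged neighborhood $A_N$ of $\varOmega^i_j$, which reduces the problem to the chain reflected on $A_N$; only then is the measure comparison $\tilde\mu_N(\Delta_{m,N})/\tilde\mu_N(\ms E^a_N)\to 0$ of \eqref{v27} applied, where $\Delta_{m,N}=A_N\setminus \ms E_{m,N}$ involves only shallower wells inside $\varOmega^i_j$ and the wild region between them. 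A second LDP estimate \eqref{v29} is also needed: for $a\in S_{m+1}$, starting from $\ms E^a_N$ the chain does not even leave $\ms E^a_N$ on time scale $\beta_m$, which both gives absorbing behaviour and trivially implies \eqref{v26} for those starting points. Your proposal omits the restriction to $A_N$ and the deep-well LDP, so the claimed measure ratio need not vanish and the argument does not close.

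Finally, note that the paper verifies the hypotheses of \cite[Theorem 2.7]{bl2}, whose conditions are precisely Lemma \ref{vs10} (for the Markovian approximation of the projected trace), Lemma \ref{vs11} (for the limiting rates), together with \eqref{v27} and \eqref{v29} (for \eqref{v26}); matching your steps to these four items explicitly would make the argument airtight.
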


\begin{proof}
By \cite[Theorem 2.7]{bl2}, the first assertion of the proposition
follows by Lemmata \ref{vs10} and \ref{vs11} below. We turn to the
proof of the second assertion of the proposition.

Fix $\delta >0$ such that $\delta < H_{i+1}-H_i$ and let
$\widetilde{\Omega}^i_\delta = \{\bs x \in \Xi : F(\bs x) \le H_i +
\delta\}$. Denote by $\widetilde{\Omega}_\delta =
\widetilde{\Omega}^{i,j}_\delta$ the connected component which
contains $\Omega^i_j$ and let $A_N = \widetilde{\Omega}_\delta \cap
\Xi_N$.

By the large deviations principle for the chain $X_N(t)$, 
for every $T>0$ and every sequence $\bs x_N\in \ms E_{1,N}$,
\begin{equation*}
\lim_{N\to\infty} \mb P_{\bs x_N} \big[ H_{A^c_N} \le T \beta_m \big]
\;=\; 0\;.
\end{equation*}
This statement can be proved as Theorem 4.2 of Chapter 4, or Theorem
6.2 of Chapter 6 in \cite{fw1}.  It is therefore enough to prove
\eqref{v26} for the chain $X_N(t)$ reflected at $A_N$, the chain
obtained by removing all jumps between $A_N$ and $A^c_N$.

Denote the reflected chain by $\tilde X_N(t)$, by $\tilde\mu_N$ its
stationary state, and by $\tilde{\mb P}_{\bs x}$ the measure on the
path space $D(\bb R_+, A_N)$ induced by the chain $\tilde X_N(t)$
starting from $\bs x\in A_N$. Expectation with respect to $\tilde{\mb
  P}_{\bs x}$ is represented by $\tilde{\mb E}_{\bs x}$. We have to
prove \eqref{v26} with $X_N(t)$, $\mb E_{\bs x_N}$ replaced by $\tilde
X_N(t)$, $\tilde{\mb E}_{\bs x}$, respectively. Equation \eqref{v26}
with these replacements is represented as (\ref{v26}$\,*$).

Let $\Delta_{m,N} = A_N \setminus \ms E_{m,N}$. By definition of the
sets $\ms E^a_N$, for $a\in S_1$, $\tilde\mu_N
(\Delta_{1,N})/\tilde\mu_N (\ms E^a_N)$ is at most of the order
$\exp\{ - N (\theta_1 - \epsilon)\}$, where $\epsilon$ has been
introduced right before \eqref{v16}. For each fixed $1< m\le n$, $a\in
S_m$, $\tilde\mu_N (\Delta_{m,N})/\tilde\mu_N (\ms E^a_N)$ is at most
of the order $\exp\{ - N (\theta_m - \theta_{m-1} -
\epsilon)\}$. Therefore, for every $1\le m\le n$, $a\in S_m$,
\begin{equation}
\label{v27}
\lim_{N\to\infty} \frac{\tilde\mu_N (\Delta_{m,N})}
{\tilde\mu_N (\ms E^a_N)} \;=\; 0\;.
\end{equation}

Fix $1\le m< n$, $a\in S_{m+1}$ and a sequence $\bs x_N\in \ms
E^a_N$. By the large deviations principle for the chain $\tilde
X_N(t)$, for every $T>0$,
\begin{equation}
\label{v29}
\lim_{N\to\infty} \tilde {\mb P}_{\bs x_N} \big[ H_{(\ms E^a_N)^c} 
\le T \beta_m \big] \;=\; 0\;.
\end{equation} 
By \cite[Theorem 2.7]{bl2}, assertion (\ref{v26}$\,*$) follows from
the first part of this proposition and from \eqref{v27}, \eqref{v29},
which concludes the proof.
\end{proof}

Recall that we denoted by $\{\bs m_{a,1}, \dots, \bs m_{a,q}\}$,
$q=q_{a}$, the deepest local minima of $F$ which belong to $W_a$.  

\begin{lemma}
\label{vs10}
Under the hypotheses of Proposition \ref{vs09}, for every $a\in S_m$,
\begin{equation*}
\lim_{N\to \infty} \sup_{\bs y\in \ms E^a_N} 
\frac{\Cap_N(\ms E^a_N, \cup_{b\in S_m, b\not = a} \ms E^b_N)}
{\Cap_N(\{\bs y\} , \{\bs m_{a,1}\})}\;=\;0\; .
\end{equation*}
\end{lemma}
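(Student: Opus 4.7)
The plan is to estimate numerator and denominator separately: bound the numerator from above by Theorem \ref{vs04}, and bound the denominator from below uniformly in $\bs y$ via Thomson's principle applied to a unit flow supported on a nearest-neighbor path of length $O(N)$ along which $F$ stays strictly below the saddle level $H_i$. The ratio will then carry an exponentially small factor $e^{-N\delta}$ which defeats any polynomial correction.

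First, I would apply Theorem \ref{vs04} to the disjoint subsets $A=\{a\}$ and $B=S_m\setminus\{a\}$ of $S$, yielding
\begin{equation*}
\Cap_N (\ms E^a_N, \cup_{b\in S_m, b\neq a} \ms E^b_N)
\;=\; [1+o_N(1)]\, \frac{(2\pi N)^{d/2}}{Z_N}\, \frac{e^{- N H_i}}{2\pi N} \, \Cap_{\bb G} (\{a\}, S_m\setminus\{a\})\;,
\end{equation*}
so the numerator is of order $N^{d/2-1}\, e^{-NH_i}/Z_N$ with an $N$-independent prefactor.

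Next, the key step is to prove that there exist $\delta>0$ and $C_0>0$, depending only on $F$, $\epsilon$ and the fixed well $W_a$, such that
\begin{equation*}
\inf_{\bs y \in \ms E^a_N} \Cap_N(\{\bs y\}, \{\bs m_{a,1}\})
\;\ge\; \frac{1}{C_0\, N\, Z_N}\, e^{-N(H_i - \delta)}\;.
\end{equation*}
Since $\mu_N(\bs x)\, R_N(\bs x,\bs x') = Z_N^{-1}\, e^{-(N/2)[F(\bs x)+F(\bs x')]}$, Thomson's principle reduces this bound to producing, for each $\bs y\in\ms E^a_N$, a nearest-neighbor lattice path $\gamma_{\bs y}\subset\Xi_N$ from $\bs y$ to $\bs m_{a,1}$ of length $O(N)$ along which $F\le H_i-\delta$. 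I would build $\gamma_{\bs y}$ in two stages. First, because $\bs y\in W^\epsilon_a$ lies in $\mathring W_a$ with $F(\bs y)<H_i-\epsilon$, the trajectory of $\dot{\bs x}(t)=-\nabla F(\bs x(t))$ starting at $\bs y$ remains in $\mathring W_a$ (since $F<H_i$ strictly there) and, by the {\L}ojasiewicz gradient inequality applied to the Morse-type function $F$, reaches some local minimum $\bs m'\in W_a\cap\mf M$ along a continuous curve of finite, uniformly bounded length; along this curve $F$ is non-increasing, hence bounded by $F(\bs y)\le H_i-\epsilon$. Second, since $W_a\cap\mf M$ is finite and the interior $\mathring W_a$ is a connected open subset of $\bb R^d$, hence path-connected, one may fix once and for all continuous paths in $\mathring W_a$ from each $\bs m\in W_a\cap\mf M$ to $\bs m_{a,1}$; by compactness these finitely many paths share a uniform upper bound $M<H_i$ on $F$. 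Discretizing the concatenation at scale $1/N$ produces the desired lattice path of length $O(N)$ along which $F\le H_i-\delta$ for $\delta=(1/2)\min(\epsilon,H_i-M)$ and $N$ large.

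Combining the two estimates, the ratio is bounded by $C_0\, N^{d/2}\, e^{-N\delta}\to 0$ uniformly in $\bs y\in\ms E^a_N$. The main obstacle is the connection step: one must verify that every local minimum of $W_a$ can be connected to $\bs m_{a,1}$ by a continuous path entirely inside $\mathring W_a$. This rests on the definition of $W_a$ as the closure of a connected component of $\mathring\varOmega^i_j$, so that $\mathring W_a$ is itself connected, together with hypotheses (H1)--(H3) which provide the Morse-type structure needed both for the finiteness of $W_a\cap\mf M$ and for the convergence of the gradient flow in each basin of attraction.
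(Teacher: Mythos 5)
Your proposal takes essentially the same route as the paper: estimate the numerator via Theorem~\ref{vs04} applied to $A=\{a\}$, $B=S_m\setminus\{a\}$, and bound the denominator from below via Thomson's principle, using the series rule for the resistance along a single nearest-neighbor path of length $O(N)$ from $\bs y$ to $\bs m_{a,1}$ that stays strictly below level $H_i$. The paper's version of the path step is terser --- it simply asserts that, by the definition \eqref{v16} of $W^\epsilon_a$, the path $\gamma$ may be chosen with $F(\bs x_j)\le H_i-\epsilon$ along it, then bounds each per-edge resistance by $C_0 Z_N e^{NF(\bs x_j)}$ and sums the at most $O(N)$ terms. Your argument fills in the justification: a gradient-flow descent from $\bs y$ to a local minimum $\bs m'$ along which $F$ is non-increasing, followed by a fixed finite family of continuous paths in $\mathring W_a$ connecting each $\bs m'\in W_a\cap\mf M$ to $\bs m_{a,1}$, with a compactness argument supplying a uniform margin $\delta>0$ below $H_i$. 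Both end with the same conclusion: the ratio is $O(N^{d/2}e^{-N\delta})\to 0$ uniformly in $\bs y$. One small caveat worth flagging in your write-up: the gradient trajectory from an arbitrary $\bs y$ may in principle converge to a lower-level saddle on a stable manifold rather than a minimum, and the arc length is finite but needs a compactness argument to be uniformly bounded over $\bs y\in W^\epsilon_a$; these are routine but should be acknowledged, just as the paper's shortcut implicitly relies on $W^\epsilon_a$ (or a slight enlargement thereof) being path-connected to $\bs m_{a,1}$ below level $H_i$.
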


\begin{proof}
Fix $a\in S_m$, $\bs y\in \ms E^a_N$. We estimate $\Cap_N(\{\bs y,
\{\bs m_{a,1}\})$ through Thomson's principle. Let $(\bs y = \bs x_0,
\bs x_1, \dots, \bs x_m = \bs m_{a,1})$ be a path $\gamma$ from $\bs
y$ to $\bs m_{a,1}$ so that $\Vert \bs x_j - \bs x_{j+1}\Vert =
1/N$. By Thomson's principle,
\begin{equation*}
\frac{1}{\Cap_N(\{\bs y\}, \{\bs m_{a,1}\})} \;\le\; \sum_{j=0}^{m-1} 
\frac 1{\mu_N (\bs x_j)\, R_N (\bs x_j , \bs x_{j+1} )}\;\cdot
\end{equation*}
In view of the explicit formulas for the measure $\mu_N$ and the rates
$R_N$, there exists a finite constant $C_0$ such that
\begin{equation*}
\frac 1{\mu_N (\bs x_j)\, R_N (\bs x_j , \bs x_{j+1} )}\; \le\; 
C_0\, Z_N \, e^{N F (\bs x_j)}\;.
\end{equation*}
It follows from the definition \eqref{v16} of the set $W^\epsilon_a$
that the path $\gamma$ can be chosen in such a way that $F (\bs x_j)
\le H_i - \epsilon$. The previous sum is thus bounded above by $C_0\,
Z_N \, N\, \exp N \{H_i - \epsilon\}$, where $N$ has been
introduced to take care of the length of the path. This estimate is
uniform over $\bs y\in \ms E^a_N$. To conclude the proof of the lemma,
it remains to recall the assertion of Theorem \ref{vs04}.
\end{proof}

\begin{lemma}
\label{vs11}
Under the hypotheses of Proposition \ref{vs09}, for every $a$, $b \in
S_m$,
\begin{equation*}
\lim_{N\to \infty} \beta_m \, r^m_N(a,b) \;=\; \bs r_m(a,b)\;, 
\end{equation*}
where the rates $\bs r_m(a,b)$ are given by \eqref{v23}. 
\end{lemma}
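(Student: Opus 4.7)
The strategy is to express $r^m_N(a,b)$ as a linear combination of capacities by polarization, apply Theorem \ref{vs04} to evaluate the asymptotics of each capacity, and then divide by $\mu_N(\ms E^a_N)$ using the asymptotics \eqref{v24}. The choice of time scale $\beta_m$ will exactly cancel the exponential factor when $a\in T_m$, and produce a vanishing factor when $a\in S_{m+1}$.

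The first step is to write
\begin{equation*}
\mu_N(\ms E^a_N)\, r^m_N(a,b) \;=\; \tfrac{1}{2}\Big\{
\Cap_N(\ms E^a_N, \ms E_N(S_m\setminus\{a\}))
+ \Cap_N(\ms E^b_N, \ms E_N(S_m\setminus\{b\}))
- \Cap_N(\ms E^a_N\cup\ms E^b_N, \ms E_N(S_m\setminus\{a,b\}))
\Big\}.
\end{equation*}
This is exactly the polarization identity used just after \eqref{v21}, applied to the trace of $X_N(t)$ on $\ms E_{m,N}=\cup_{c\in S_m}\ms E^c_N$, together with the standard fact (\cite[Lemma 6.9]{bl2}) that the capacity between disjoint subsets of $\ms E_{m,N}$ is the same for the original chain and for its trace.

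The second step is asymptotic evaluation. Since $\{a\}$, $\{b\}$, $\{a,b\}$ and their complements within $S_m$ are all (pairs of) disjoint subsets of $S$, Theorem \ref{vs04} applies to each of the three capacities above. Combining the three limits with the definition \eqref{v41} of $\bs c_m(a,b)$ gives
\begin{equation*}
\mu_N(\ms E^a_N)\, r^m_N(a,b) \;=\; [1+o_N(1)]\,
\frac{(2\pi N)^{d/2}}{Z_N}\,\frac{e^{-NH_i}}{2\pi N}\,\bs c_m(a,b).
\end{equation*}
Dividing by the asymptotic \eqref{v24} for $\mu_N(\ms E^a_N)$ and using $\hat\theta_a=H_i-h_a$ yields
\begin{equation*}
r^m_N(a,b) \;=\; [1+o_N(1)]\,\frac{1}{2\pi N}\, e^{-N\hat\theta_a}\,\frac{\bs c_m(a,b)}{\bs\mu(a)}.
\end{equation*}

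The final step is to multiply by $\beta_m = 2\pi N\, e^{\theta_m N}$:
\begin{equation*}
\beta_m\, r^m_N(a,b) \;=\; [1+o_N(1)]\, e^{N(\theta_m-\hat\theta_a)}\,\frac{\bs c_m(a,b)}{\bs\mu(a)}.
\end{equation*}
If $a\in T_m$ then $\hat\theta_a=\theta_m$, so the exponential is $1$ and the limit is $\bs c_m(a,b)/\bs\mu(a)$, which matches the first branch of \eqref{v23}. If $a\in S_{m+1}$ then $\hat\theta_a\ge\theta_{m+1}>\theta_m$, so $e^{N(\theta_m-\hat\theta_a)}\to 0$ and the limit is $0$, matching the second branch of \eqref{v23}. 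The only non-routine input here is the verification that the polarization identity applies with $\ms E_{m,N}$ as the state space of the trace rather than $\ms E_{1,N}$; this is why the capacities of the form $\Cap_N(\,\cdot\,,\ms E_N(S_m\setminus\,\cdot\,))$ rather than $\Cap_N(\,\cdot\,,\cup_{c\ne a}\ms E^c_N)$ appear, and this is precisely what makes the conductance $\bs c_m(a,b)$ (and not $\bs c(a,b)$) enter the limit.
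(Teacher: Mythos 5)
Your proposal is correct and follows essentially the same route as the paper's proof: both express $\mu_N(\ms E^a_N)\,r^m_N(a,b)$ as the polarization combination of three capacities via \cite[Lemma 6.8]{bl2} applied to the trace on $\ms E_{m,N}$, then evaluate each capacity by Theorem \ref{vs04}, and finally divide by the asymptotics \eqref{v24} for $\mu_N(\ms E^a_N)$ and multiply by $\beta_m$. The one place where your write-up is actually cleaner than the paper's is the last step: the paper rewrites \eqref{v24} directly in the form $\mu_N(\ms E^a_N)=[1+o_N(1)]\,\tfrac{(2\pi N)^{d/2}}{Z_N}\tfrac{e^{-NH_i}}{2\pi N}\,\beta_m\,\bs\mu(a)$, which is only literally correct for $a\in T_m$ (for $a\in S_{m+1}$ one has $\hat\theta_a>\theta_m$ and the right side should instead carry $2\pi N\,e^{\hat\theta_aN}$), whereas you keep the factor $e^{N(\theta_m-\hat\theta_a)}$ explicit and read off both branches of \eqref{v23} from it, which is the more transparent bookkeeping.
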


\begin{proof}
By \cite[Lemma 6.8]{bl2}, 
\begin{equation*}
\begin{split}
r^m_N(a,b) \;=\; \frac{1}{2} \, \frac{1}{\mu_N(\ms E^a_N)} \,
\Big\{ & \Cap_{N} (\ms E^a_N, \ms E_{m,N} \setminus \ms E^a_N) \; +\; 
\Cap_{N}(\ms E^b_N, \ms E_{m,N} \setminus \ms E^b_N) \\
&\qquad - \; \Cap_{N} (\ms E^a_N\cup \ms E^b_N, 
\ms E_{m,N} \setminus [\ms E^a_N\cup \ms E^b_N])\, \Big\} \,. 
\end{split}
\end{equation*}
By \eqref{v24},
\begin{equation*}
\mu_N(\ms E^a_N) \;=\; [1+o_N(1)]\, \frac{(2\pi N)^{d/2}}{Z_N}\,
\frac{e^{- N H_i}}{2 \pi N} \, \beta_m \, \bs \mu (a) \;. 
\end{equation*}
The assertion of the lemma follows from this equation, Theorem
\ref{vs04} and the definition of $\bs c_m$ given just above
\eqref{v23}.
\end{proof}

We conclude this section with a calculation which provides an
estimation for the measure of the wells.  Denote by $\bs m^1, \dots,
\bs m^r$ the global minima of $F$ on $\Xi$. We claim that
\begin{equation}
\label{v06}
\lim_{N\to\infty} \frac{e^{N F(\bs m^1)}}{(2\pi N)^{d/2}} \, Z_N \;=\;
\sum_{k=1}^r \frac 1{\sqrt{\det [({\rm Hess}\, F) (\bs m^k)]}}\;\cdot 
\end{equation}
A similar argument yields \eqref{v24}.

Indeed, fix a sequence $\varepsilon_N$ such that $\lim_{N\to\infty} N
\varepsilon^3_N =0$ and for which $\exp\{- N \varepsilon^2_N\}$
vanishes faster than any polynomial. Fix $1\le k\le r$ and denote by
$\bs w^1, \dots, \bs w^d$ the eigenvectors of $({\rm Hess}\, F) (\bs
m^k)$ and by $0<\lambda_1 \le \cdots \le \lambda_d$ the
eigenvalues. Consider the neighborhood $B_N$ of $\bs m^k$ defined by
\begin{equation*}
B_N \;=\; \big\{\bs x\in \Xi_N : |(\bs x - \bs m^k) \cdot \bs w^i|
\le \varepsilon_N \,,\, 1\le i\le d\}\;.
\end{equation*}
It follows from the assumptions on $\varepsilon_N$ and on $F$, from a
second-order Taylor expansion of $F$ around $\bs m^k$, and from a
simple calculation that
\begin{equation*}
\lim_{N\to\infty} \frac{e^{N F(\bs m^1)}}{(2\pi N)^{d/2}} \, 
\sum_{\bs x\in B_N}  e^{- N F(\bs x)} \;=\;
\frac 1{\sqrt{\det [({\rm Hess}\, F) (\bs m^k)]}}\;\cdot 
\end{equation*}

Denote by $B^{(2)}_N$ the neighborhood of $\bs m^k$ defined by
\begin{equation*}
B^{(2)}_N \;=\; \big\{\bs x\in \Xi_N : \Vert \bs x - \bs m^k \Vert
\le \lambda_1/(4C_1) \}\;,
\end{equation*}
where $C_1$ is the Lipschitz constant introduced in assumption (H1).
Clearly, on $B^{(2)}_N$, $F(\bs x) - F(\bs m^1) \ge (1/2) \lambda_1
\Vert \bs x - \bs m^k \Vert^2 - C_1 \Vert \bs x - \bs m^k \Vert^3 \ge
(\lambda_1/4) \Vert \bs x - \bs m^k \Vert^2$. Therefore, as $\Vert
\bs x - \bs m^k \Vert^2 \ge \varepsilon^2_N$ on $B_N^c$ and as $N\,
\varepsilon^2_N \to\infty$,
\begin{equation*}
\lim_{N\to\infty} \frac{e^{N F(\bs m^1)}}{(2\pi N)^{d/2}} \, 
\sum_{\bs x\in B^{(2)}_N  \setminus B_N}  e^{- N F(\bs x)} \;=\;
0\;.
\end{equation*}
On the complement of the union of all $B^{(2)}_N$-neighborhoods of the
minima $\bs m^k$, $F(\bs x) - F(\bs m^1) \ge \delta$ for some
$\delta>0$. In particular the contribution to $Z_N$ of the sum over
this set is negligible. Putting together all previous estimates we
obtain \eqref{v06}.

\section{Proof of Theorem \ref{vs20}}
\label{vsec06}

We prove in this section Theorem \ref{vs20}.  Recall the notation
introduced in Subsection \ref{vsec00}.D. Hereafter, $C_0$ represents a
finite constant independent of $N$ which may change from line to
line. We start with some preliminary results.

\begin{lemma}
\label{vs22}
Fix $1\le i \le i_0$, $1\le j\le \ell_i$, and $a\in S = \{1, \dots
\ell^i_j\}$.  Let $\ms B_a = \cup_{\bs z\in \mf S_a} \ms D_{\bs z}$.
For any sequence $\{\bs x_N: N\ge 1\}$, $\bs x_N\in \ms E^a_N$,
\begin{equation*}
\lim_{N\to\infty} \mb P_{\bs x_N} \Big[ H_{\ms D_a} = 
H_{\ms B_a} \Big]  \;=\; 1\;.
\end{equation*}
\end{lemma}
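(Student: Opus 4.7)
The claim is equivalent to showing $\lim_{N\to\infty} \mb P_{\bs x_N}[H_{\ms R_N} < H_{\ms B_a}] = 0$, where $\ms R_N := \ms D_a \setminus \ms B_a$ is the discretization of the piece $\Omega^2_N \subset \partial \Omega_N$. The energetic point is that every $\bs x \in \ms R_N$ lies within $1/N$ of $\partial \Omega_N = \{F = H_i + \delta_N\}$ and hence satisfies $F(\bs x) \ge H_i + \delta_N - O(1/N)$, whereas $\ms B_a = \cup_{\bs z \in \mf S_a} \ms D_{\bs z}$ contains the saddles themselves, at which $F = H_i$. Thus reaching $\ms R_N$ demands climbing $\delta_N$ higher than reaching $\ms B_a$, and the ratio of the corresponding capacities should decay like $e^{-N\delta_N}$, which tends to $0$ by the hypothesis $N^{d+1} e^{-N\delta_N} \to 0$.

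First I would bound $\Cap_N(\ms E^a_N, \ms R_N)$ from above by the Dirichlet principle applied to the explicit test function
$$f(\bs x) \;:=\; \max\Bigl\{0,\, \min\Bigl\{1,\, \frac{2\bigl(H_i + \delta_N - C_0/N - F(\bs x)\bigr)}{\delta_N}\Bigr\}\Bigr\},$$
where $C_0$ is a fixed constant chosen large enough that $f$ equals $1$ on $\{F \le H_i + \delta_N/2\} \supset \ms E^a_N$ and $0$ on $\{F \ge H_i + \delta_N - C_0/N\} \supset \ms R_N$. Its gradient is supported on the thin shell $\{H_i + \delta_N/2 \le F \le H_i + \delta_N\}$, which contains at most $C N^d \delta_N$ lattice points on which $\mu_N \le C Z_N^{-1} e^{-N(H_i + \delta_N/2)}$, while $|f(\bs y) - f(\bs x)|^2 \le C/(N\delta_N)^2$ for $\Vert \bs y - \bs x\Vert = 1/N$. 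Combining these ingredients yields $\Cap_N(\ms E^a_N, \ms R_N) \le C\, N^{d-2} Z_N^{-1}\delta_N^{-1} e^{-N(H_i + \delta_N/2)}$. For the matching lower bound on $\Cap_N(\ms E^a_N, \ms B_a)$, note that the closed surface $\ms D_a = \ms B_a \cup \ms R_N$ topologically separates $\ms E^a_N$ from $\ms E_N(S \setminus \{a\})$ inside $\Omega_N$, so $\Cap_N(\ms E^a_N, \ms D_a) \ge \Cap_N(\ms E^a_N, \ms E_N(S\setminus\{a\}))$; combining this with the subadditivity $\Cap_N(\ms E^a_N, \ms D_a) \le \Cap_N(\ms E^a_N, \ms B_a) + \Cap_N(\ms E^a_N, \ms R_N)$ and Theorem \ref{vs04} gives $\Cap_N(\ms E^a_N, \ms B_a) \ge [c - o(1)]\, N^{d/2-1} Z_N^{-1} e^{-NH_i}$. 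Therefore
$$\frac{\Cap_N(\ms E^a_N, \ms R_N)}{\Cap_N(\ms E^a_N, \ms B_a)} \;\le\; \frac{C\, N^{d/2-1}}{\delta_N}\, e^{-N\delta_N/2} \;\longrightarrow\; 0\;.$$

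To translate this capacity decay into the desired pointwise hitting-probability estimate, I would follow the pattern of Lemma \ref{vs11} and Theorem \ref{vs07}: apply the symmetric capacity formula of \cite[Lemma 6.8]{bl2} to the trace of $X_N$ on the union $\ms E^a_N \cup \ms B_a \cup \ms R_N$, thereby expressing the averaged jump rates from $\ms E^a_N$ to $\ms B_a$ and to $\ms R_N$ in terms of the capacities just estimated. The capacity ratio above forces the rate to $\ms R_N$ to be negligible compared to the rate to $\ms B_a$. The main technical obstacle I anticipate is passing from this ratio of averaged trace rates to the pointwise probability $\mb P_{\bs x_N}[H_{\ms R_N} < H_{\ms B_a}]$ for the specific starting configuration $\bs x_N \in \ms E^a_N$; this requires a mixing estimate in the spirit of Lemma \ref{vs10} applied here, namely that $\Cap_N(\{\bs y\}, \{\bs m_{a,1}\})$ dominates $\Cap_N(\ms E^a_N, \ms B_a \cup \ms R_N)$ uniformly in $\bs y \in \ms E^a_N$, so that the chain equilibrates within $\ms E^a_N$ on a time scale much shorter than the exit time and \cite[Theorem 2.7]{bl2} can be invoked to take the $N\to\infty$ limit.
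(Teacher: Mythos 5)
Your overall plan---to reduce the claim to a vanishing ratio of capacities---is the right idea, and your capacity estimates are essentially sound: the Dirichlet upper bound on the numerator, the separation and subadditivity argument for the lower bound, and the check that $N^{d/2-1}\delta_N^{-1}e^{-N\delta_N/2}\to 0$ under the hypotheses on $\delta_N$ all hold up. The paper's own numerator estimate is slightly sharper (and simpler): it takes the test function to be the \emph{indicator} of $\ms D_a\setminus\ms B_a$, whose Dirichlet form is supported only on bonds touching that $O(N^{d-1})$-point surface, on which $F\ge H_i+\delta_N-O(1/N)$, giving $\Cap_N(\bs x_N,\ms D_a\setminus\ms B_a)\le C_0 Z_N^{-1}N^{d}e^{-N(H_i+\delta_N)}$ directly, with the full exponent $\delta_N$ rather than $\delta_N/2$.

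The genuine gap is in the last step, which you yourself flag as the ``main technical obstacle.'' The paper closes the argument in one line by invoking \cite[Lemma 4.3]{bl9}, the elementary potential--theory inequality (a last-exit decomposition at $\bs x_N$ gives it immediately)
\begin{equation*}
\mb P_{\bs x_N}\big[H_B<H_C\big]\;\le\;\frac{\Cap_N(\{\bs x_N\},B)}{\Cap_N(\{\bs x_N\},B\cup C)}\,,
\end{equation*}
applied with $B=\ms D_a\setminus\ms B_a$, $C=\ms B_a$. This demands the \emph{point-to-set} capacities $\Cap_N(\{\bs x_N\},\cdot)$, not $\Cap_N(\ms E^a_N,\cdot)$. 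Your numerator estimate transfers harmlessly because $\Cap_N(\{\bs x_N\},\ms R_N)\le\Cap_N(\ms E^a_N,\ms R_N)$ by monotonicity; but your lower bound concerns $\Cap_N(\ms E^a_N,\ms B_a)$, and monotonicity only gives $\Cap_N(\{\bs x_N\},\ms D_a)\le\Cap_N(\ms E^a_N,\ms D_a)$ --- the wrong direction. The paper instead lower-bounds $\Cap_N(\{\bs x_N\},\ms D_a)$ directly by Thomson's principle, constructing a unitary, divergence-free flow from a point of $\ms B_a$ near a saddle $\bs z$ (where $F\approx H_i$) along a decreasing path into the well to $\bs x_N$, as in Lemma~\ref{vs10}, giving $\Cap_N(\{\bs x_N\},\ms D_a)^{-1}\le C_0 Z_N N e^{NH_i}$. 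Your alternative --- tracing $X_N$ on $\ms E^a_N\cup\ms B_a\cup\ms R_N$, comparing averaged jump rates via \cite[Lemma 6.8]{bl2}, proving an equilibration estimate, and invoking \cite[Theorem 2.7]{bl2} --- is not carried out, and these tools are designed to establish convergence of trace processes to limiting Markov chains, not to produce a pointwise exit-probability bound from a fixed $\bs x_N$; the passage from averaged rates to $\mb P_{\bs x_N}[H_{\ms R_N}<H_{\ms B_a}]$ is exactly what is missing and is not supplied by those theorems. In short, the capacity computations are on target, but without the point-to-set inequality and a lower bound on $\Cap_N(\{\bs x_N\},\ms D_a)$ (e.g.\ the flow construction), the proof does not close.
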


\begin{proof}
By \cite[Lemma 4.3]{bl9},
\begin{equation*}
\mb P_{\bs x_N} [ H_{\ms D_a} < H_{\ms B_a} ]  \;\le\;
\frac{\Cap_N(\bs x_N,\ms D_a\setminus \ms B_a)}
{\Cap_N(\bs x_N,\ms D_a)}\;\cdot
\end{equation*}
Let $V: \Xi_N \to [0,1]$ be the indicator of the set $\ms D_a\setminus
\ms B_a$. By the Dirichlet principle and a straightforward
computation, $\Cap_N(\bs x_N,\ms D_a\setminus \ms B_a) \le D_N(V) \le
C_0 Z^{-1}_N N^d \exp\{ - N (H_i + \delta_N)\}$. On the other hand, it
is not difficult to construct a divergence-free, unitary flow $\Phi$
from $\ms B_a$ to $\bs x_N$, similar to the one presented in the proof
of Lemma \ref{vs10}, such that $\Vert \Phi\Vert^2 \le C_0 Z_N N \exp\{
- N H_i\}$. Therefore, by Thomson's principle, $\Cap_N(\bs x_N,\ms
D_a)^{-1}\le C_0 Z_N N \exp\{ - N H_i\}$, which proves the lemma in
view of the definition of the sequence $\delta_N$.
\end{proof}

Fix $\bs z\in \mf S_a$ and recall that we denote by $\bs v=\bs w^1$,
$\bs w^j$, $2\le j\le d$, a basis of eigenvectors of ${\rm Hess}\,
F(\bs z)$, where $\bs v$ is the one associated to the unique negative
eigenvalue $-\mu$. Let $B_N = B^{\bs z}_N$ be a mesoscopic
neighborhood of $\bs z$:
\begin{equation}
\label{v46}
B_N = \Big \{\bs x \in \Xi_N : 
\vert (\bs x - \bs z) \cdot \bs v \vert  \le a \,\varepsilon_N 
\,,\, \max_{2\le j\le d} 
\vert (\bs x - \bs z) \cdot \bs w^j \vert  \le 
\varepsilon_N \, \Big\}\;,
\end{equation}
where $a= \max\{ 1, \mu^{-1} ( 1 + \sum_{2\le j\le d} \lambda_j)\}$,
and $\varepsilon_N$ is a sequence of positive numbers such that
$N^{-2}\ll \varepsilon_N^4 \ll N^{-3/2}$, $\varepsilon_N^2 \gg
\delta_N$. The sets $D_{\bs z}$, $\bs z\in \mf S_a$, are contained in
$B_N$ because, by \eqref{v45} and \eqref{v52},
\begin{equation}
\label{v51}
\sup_{\bs x \in D_{\bs z}} \Vert \bs x- \bs z\Vert^2 \;\le\;
(4/\lambda_2)\, \delta_N\; .
\end{equation}

Recall from \eqref{v47} the definition of the outer boundary $\partial
B_N$ of $B_N$, and let $\partial_- B_N$, $\partial_+ B_N$ be the
pieces of the outer boundary of $B_N$ defined by
\begin{equation*}
\begin{split}
& \partial_- B_N = \{\bs x \in \partial B_N :  
(\bs x - \bs z) \cdot \bs v < - a\, \varepsilon_N \}\;, \\
&\qquad \partial_+ B_N = \{\bs x \in \partial B_N :  
(\bs x - \bs z) \cdot \bs v >  a\, \varepsilon_N \}\;.
\end{split}
\end{equation*}
A Taylor expansion of $F$ around $\bs z$ shows that 
\begin{equation}
\label{v48}
\max_{\bs x \in \partial_- B_N \cup \partial_+ B_N} F(\bs x) 
\;\le\; H_i \;-\; \frac 12 \, \varepsilon_N^2 \, 
\big(1 + O(\varepsilon_N) \big)\;.
\end{equation}

Denote by $H_N$ the hitting time of the boundary $\partial B_N$, and
by $H^\pm_N$ the hitting time of the sets $\partial_\pm B_N$.

\begin{proposition}
\label{vs23}
For every $\bs z\in \mf S_a$,
\begin{equation*}
\lim_{N\to \infty} \max_{\bs x\in \ms D_{\bs z}} \Big|\, 
\mb P_{\bs x} \big[ H_N = H^\pm_N\big] \,-\, \frac 12 \, \Big|\;=\;0\;.
\end{equation*}
\end{proposition}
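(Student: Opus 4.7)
The plan is to exploit the reflection symmetry $\bs v \mapsto -\bs v$, which interchanges $\partial_+ B_N$ with $\partial_- B_N$ but fixes $\bs z + \bb H$ pointwise: it is a symmetry of the quadratic approximation of $F$ at the saddle, so on the separatrix the two exit probabilities should match to leading order. Concretely, I would use the explicit function $V_N(\bs x) = f_N\big((\bs x - \bs z)\cdot \bs v\big)$ from \eqref{v03} as an approximate equilibrium potential between $\partial_- B_N$ and $\partial_+ B_N$: it satisfies $V_N \equiv 1/2$ on $\bs z + \bb H$, and since every $\bs x \in \ms D_{\bs z}$ lies within $1/N$ of $D_{\bs z} \subset \bs z + \bb H$, a first-order bound using $\sup_r |f'_N(r)| = \sqrt{N\mu/(2\pi)}$ gives $V_N(\bs x) = 1/2 + O(N^{-1/2})$ on $\ms D_{\bs z}$.

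Next I would compare $V_N$ with the true exit distribution via Dynkin's identity
\begin{equation*}
V_N(\bs x) \;=\; \mb E_{\bs x}\big[V_N(X_N(H_N))\big] \;-\; \mb E_{\bs x}\Big[\int_0^{H_N} (L_N V_N)(X_N(s))\, ds\Big]\;.
\end{equation*}
The key computation is that $(L_N V_N)(\bs x)$ is negligible on $B_N$: Taylor expansions $R_N(\bs x, \bs x \pm \bs e_j) = e^{\mp (1/2)\partial_j F(\bs x)}(1 + O(1/N))$ and $V_N(\bs x \pm \bs e_j) - V_N(\bs x) = \pm (\bs v_j/N)f'_N(h) + (1/2)(\bs v_j/N)^2 f''_N(h) + O(N^{-3})$, with $h = (\bs x - \bs z)\cdot \bs v$, yield after summing over $j$ and $\pm$ a cancellation between the drift contribution $-(\nabla F(\bs x)\cdot \bs v)f'_N(h)/N$ and the diffusive contribution $f''_N(h)/N^2$: using $\Vert \bs v\Vert = 1$, $\bb M \bs v = -\mu \bs v$, and $f''_N(h) = -N\mu h\, f'_N(h)$, both reduce to $\mu h\, f'_N(h)/N$ with opposite signs, leaving a remainder of order $(\varepsilon_N + N\varepsilon_N^3)\, f'_N(h)/N$. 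Combined with the logarithmic exit time $\mb E_{\bs x}[H_N] = O(\log N)$ (standard Kramers-type estimate for the repelling one-dimensional dynamics in $B_N$) and an exponentially small estimate for $V_N - \mb 1_{\partial_+ B_N}$ on $\partial_\pm B_N$ coming from \eqref{v48} and the Gaussian tail of $f_N$, this gives $\mb P_{\bs x}[H_N = H^+_N] = V_N(\bs x) + o(1)$, provided $\mb P_{\bs x}[X_N(H_N)\in \partial_0 B_N]\to 0$, where $\partial_0 B_N := \partial B_N \setminus (\partial_+ B_N \cup \partial_- B_N)$.

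The main obstacle is precisely this control of the side exit. Since $F$ on $\partial_0 B_N$ is of the same order as on $\partial_\pm B_N$ (both are $H_i - (\varepsilon_N^2/2)(1 + o(1))$ by the analogue of \eqref{v48}), one cannot conclude from potential values alone; instead one must exploit that the drift in the stable directions $\bs w^j$, $j\ge 2$, is attracting at rate $\lambda_j$, while the drift along $\bs v$ is repelling at rate $\mu$. The choice $a = \max\{1,\mu^{-1}(1 + \sum_{j\ge 2}\lambda_j)\}$ enlarges the $\bs v$-extent $a\varepsilon_N$ sufficiently that the repelling one-dimensional dynamics in the $\bs v$-direction hits $\pm a\varepsilon_N$ strictly before any of the Ornstein--Uhlenbeck-like coordinates $(\bs x-\bs z)\cdot \bs w^j$ reaches $\pm \varepsilon_N$, with probability $1 - o(1)$. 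I would make this rigorous either by a Freidlin--Wentzell large-deviations comparison of $X_N$ inside $B_N$ with the linearized diffusion $d\bs Y_t = -\bb M \bs Y_t\, dt + d\bs B_t$, or by constructing an explicit Lyapunov function of the form $L(\bs x) = \sum_{j\ge 2}\lambda_j \big((\bs x - \bs z)\cdot \bs w^j\big)^2 - c \mu \big((\bs x - \bs z)\cdot \bs v\big)^2$ and showing by direct computation that $L_N L \le 0$ off a neighborhood of $\bs z + \bb H$ (for a well-chosen $c$), and then applying optional stopping. Once $\mb P_{\bs x}[X_N(H_N)\in \partial_0 B_N]\to 0$ is established, the convergence of $\mb P_{\bs x}[H_N = H^+_N]$ to $1/2$ follows from the Dynkin analysis above; for $H^-_N$ the same argument applied to $1 - V_N$ gives the symmetric conclusion, and the two can also be deduced from each other via the identity $\mb P_{\bs x}[H_N = H^+_N] + \mb P_{\bs x}[H_N = H^-_N] + \mb P_{\bs x}[H_N \in \partial_0 B_N] = 1$.
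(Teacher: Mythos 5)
Your approach is essentially the same as the paper's: both use Dynkin's formula with an odd (about the separatrix) function that is approximately harmonic for the linearized dynamics near $\bs z$, together with a separate control of the ``side'' exit through $\partial_0 B_N$. Your $V_N(\bs x) = f_N((\bs x - \bs z)\cdot\bs v)$ is exactly $\tfrac12 + \sqrt{\mu/(2\pi)}\,G(\bs x)$ where $G(\bs x) = g(N^{1/2}(\bs x-\bs z)\cdot\bs v)$, $g(x)=\int_0^x e^{-\mu y^2/2}dy$, is the function the paper uses, so the Dynkin computation is identical up to an affine transformation.

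Two remarks. First, your claimed exit time bound $\mb E_{\bs x}[H_N] = O(\log N)$ is off by a factor of $N$: after rescaling the lattice by $\sqrt N$ and time by $N$ the process behaves diffusively, and the exit takes time $O(\log(\sqrt N\varepsilon_N))$ in the \emph{rescaled} clock, hence $O(N\log N)$ in the original one. (The paper's Lemma~\ref{vs24} proves the cruder bound $C_0 N^{3/2}\varepsilon_N$.) The error does not break your argument, because the per-unit-time Dynkin defect is still small enough that the product with the true exit time goes to zero given the constraints $N^{-2}\ll\varepsilon_N^4\ll N^{-3/2}$; but the stated estimate is incorrect and should be fixed. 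Second, for the side exit the paper's Lemma~\ref{vs25} proceeds direction by direction, showing for each stable coordinate $j\ge 2$ that $G_j(\bs x) := N\big((\bs x-\bs z)\cdot\bs w^j\big)^2$ is an approximate supermartingale (the generator applied to $G_j$ is $\le (2+o(1))/N$) and then applying Markov's inequality at the exit time; it never needs a compensating term in the unstable direction. Your proposed Lyapunov function $L(\bs x)=\sum_{j\ge 2}\lambda_j\big((\bs x-\bs z)\cdot\bs w^j\big)^2 - c\mu\big((\bs x-\bs z)\cdot\bs v\big)^2$ can indeed be made a supermartingale by taking $c\ge\sum_{j\ge2}\lambda_j/\mu$, but then on $\partial_0 B_N$ one has no guarantee that $L\ge 0$ (the negative term $-c\mu a^2 N\varepsilon_N^2$ can dominate $\lambda_2 N\varepsilon_N^2$ because $a$ was precisely chosen proportional to $\sum\lambda_j/\mu$), so Markov's inequality does not apply directly and the optional-stopping step as stated fails. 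The direction-by-direction choice avoids this sign conflict; your alternative route via Freidlin--Wentzell comparison with the linearized diffusion would also work.
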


\begin{corollary}
\label{vs26}
Let $\{\bs x^c_N : N\ge 1\}$, $c\in S$, be a sequence of points in
$\ms E^c_N$ and let $\hat S = \hat S_N =\{\bs x^c_N : c\in S\}$.  Fix
$a\not = b \in S$ and $\bs z\in \mf S_{a,b}$. Then,
\begin{equation*}
\lim_{N\to \infty} \max_{\bs x\in \ms D_{\bs z}} \Big|\, 
\mb P_{\bs x} \big[ H_{\hat S} = H_{\bs x^c_N} \big] 
\,-\, q(c) \, \Big|\;=\;0\;,
\end{equation*}
where $q(a) = q(b)=1/2$ and $q(c)=0$ for $c\in S\setminus \{a,b\}$.
\end{corollary}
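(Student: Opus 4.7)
The plan is to deduce Corollary \ref{vs26} from Proposition \ref{vs23} by the strong Markov property and a capacity-comparison argument. Since $\bs z \in \mf S_{a,b}$, the eigenvector $\bs v$ associated to the negative eigenvalue of ${\rm Hess}\, F(\bs z)$ points into $W_a$ on one side of $\bs z$ and into $W_b$ on the other, so I orient it so that $\partial_- B_N$ lies adjacent to $W_a$ and $\partial_+ B_N$ adjacent to $W_b$. Proposition \ref{vs23} then gives $\mb P_{\bs x}[H_N = H^-_N] = \mb P_{\bs x}[H_N = H^+_N] = \tfrac 12 + o(1)$ uniformly in $\bs x \in \ms D_{\bs z}$, and by the strong Markov property at the stopping time $H_N$ the corollary reduces to the following sub-claim: for every sequence $\bs y_N \in \partial_- B_N$,
\begin{equation*}
\lim_{N\to\infty} \mb P_{\bs y_N}\big[\, H_{\hat S} = H_{\bs x^a_N} \,\big] \;=\; 1\;,
\end{equation*}
together with the analogous statement for $\partial_+ B_N$ and $\bs x^b_N$.

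To prove the sub-claim I would use \cite[Lemma 4.3]{bl9}, invoked already in the proof of Lemma \ref{vs22}, which bounds
\begin{equation*}
\mb P_{\bs y_N}\big[\, H_{\hat S\setminus \{\bs x^a_N\}} < H_{\bs x^a_N} \,\big]
\;\le\; \frac{\Cap_N(\{\bs y_N\},\, \hat S\setminus \{\bs x^a_N\})}{\Cap_N(\{\bs y_N\},\, \hat S)}\;,
\end{equation*}
and then bound the denominator from below by $\Cap_N(\{\bs y_N\}, \{\bs x^a_N\})$ by monotonicity. The lower bound on this last capacity is obtained from Thomson's principle applied to the unitary flow supported on a discretized gradient-descent path from $\bs y_N$ down to $\bs x^a_N$, concatenated with a short nearest-neighbor path inside $\ms E^a_N$, exactly as in the construction of the path $\gamma$ in the proof of Lemma \ref{vs10}. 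Since $F$ is non-increasing along the descent and, by the Taylor expansion \eqref{v48}, $F(\bs y_N) \le H_i - \tfrac 12 \varepsilon_N^2 (1 + O(\varepsilon_N))$, the resulting bound reads $\Cap_N(\{\bs y_N\}, \{\bs x^a_N\})^{-1} \le C_0 N Z_N e^{N(H_i - \varepsilon_N^2/2)}$.

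The numerator is estimated from above by the Dirichlet principle with a test function $V$ built as in Assertion \ref{vs19}, equal to $1$ on the connected component of $\ms V_N$ containing $W_a$ and $0$ on the other components, and smoothly interpolated inside each mesoscopic box $\mf B^{\bs z'}_N$, $\bs z'\in \mf S_a$, via the one-dimensional profile \eqref{v03}. At the particular saddle $\bs z$, however, I would shift the transition hyperplane by $O(\varepsilon_N)$ in the $\bs v$-direction so that $V(\bs y_N) = 1$ rather than $\approx \tfrac 12$. Rerunning the computation of Assertion \ref{vs02} with the shifted hyperplane shows that the Dirichlet form contribution of $\bs z$ differs from its unshifted counterpart by a $(1+o(1))$-factor, and summing over $\bs z'\in \mf S_a$ gives $D_N(V) \le C_0 Z_N^{-1} N^{d/2-1} e^{-N H_i}$. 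The capacity ratio is therefore bounded by $C_0 N^{d/2} e^{-N\varepsilon_N^2/2}$, which vanishes provided $\varepsilon_N$ is chosen (within the admissible range $N^{-2}\ll\varepsilon_N^4 \ll N^{-3/2}$) so that $N\varepsilon_N^2 \gg \log N$; any $\varepsilon_N$ of the form $N^{-\alpha}$ with $3/8<\alpha<1/2$ works.

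The main obstacle is verifying the upper bound on the numerator: one must confirm that shifting the transition hyperplane at $\bs z$ by $O(\varepsilon_N)$ preserves the Gaussian asymptotics of Assertion \ref{vs02}, and—crucially—that the function $V$ resulting from this shift still vanishes at every $\bs x^c_N$ with $c\ne a$. Since the shift is small compared with the mesoscopic radius and the saddles are isolated, the set $\{V=1\}$ does not spill over into any other well through the unshifted saddles $\bs z'\in \mf S_a\setminus\{\bs z\}$, so the vanishing of $V$ on the other $\bs x^c_N$ is automatic. The remaining parts of the argument are direct applications of Thomson's and Dirichlet's principles together with the flow and test-function constructions already developed in Sections \ref{vsec02}--\ref{vsec03}.
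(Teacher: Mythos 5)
Your high-level strategy matches the paper's: use the strong Markov property at $H_N$ together with Proposition \ref{vs23} (via Lemma \ref{vs25}) to reduce the corollary to the sub-claim that from $\partial_- B_N$ the chain hits $\bs x^a_N$ before the rest of $\hat S$, and prove the sub-claim by the same capacity ratio as in Lemma \ref{vs22}, with the indicator test function replaced by the smoother test function of Section \ref{vsec02} (cf.\ Assertion \ref{vs19}). This is exactly the route the paper takes, and your Thomson lower bound via a discretized gradient-descent path and the final arithmetic $C_0 N^{d/2}e^{-N\varepsilon_N^2/2}\to 0$ are both correct.

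However, the ``shift the transition hyperplane'' step is both unnecessary and, as stated, incorrect. First, the unnecessary part: a point $\bs y_N\in\partial_- B_N$ satisfies $(\bs y_N-\bs z)\cdot \bs v<-a\,\varepsilon_N\le -\varepsilon_N$, so it lies strictly outside the mesoscopic box $\ms B^{\bs z}$ of \eqref{v05} (in the $\bs v$-direction), it has $F(\bs y_N)\le H_i-\tfrac12\varepsilon_N^2(1+O(\varepsilon_N))$ by \eqref{v48} so it belongs to $\ms U_N$, and it is connected within $\ms U_N$ (away from the saddle boxes) to $W_a$. Hence $\bs y_N\in\ms V^a_N$ and the \emph{unshifted} test function already gives $V(\bs y_N)=1$ exactly, not $\approx \tfrac12$. (Even if $\bs y_N$ happened to fall inside $\mf B^{\bs z}_N$, the profile value there would be $\Phi\big(\sqrt{N\mu}\,a\,\varepsilon_N\big)\to 1$, since $\sqrt{N}\varepsilon_N\to\infty$; the only points where the profile is near $\tfrac12$ are within $O(N^{-1/2})$ of $\bs z$, i.e.\ in $\ms D_{\bs z}$ — not in $\partial_- B_N$.) Second, and more importantly, the claim that shifting by $O(\varepsilon_N)$ changes the Dirichlet form only by $1+o(1)$ is false. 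If the profile is $V_\sigma(r)=f_N(r-\sigma)$, the Dirichlet form along $\bs v$ involves $\int e^{(1/2)N\mu r^2}\,V_\sigma'(r)^2\,dr \propto \int e^{(1/2)N\mu r^2 - N\mu(r-\sigma)^2}\,dr = e^{N\mu\sigma^2}\int e^{-(1/2)N\mu(r-2\sigma)^2}\,dr$, so a shift $\sigma=O(\varepsilon_N)$ multiplies the saddle contribution by $e^{N\mu\sigma^2}$, which blows up since $N\varepsilon_N^2\to\infty$. Implementing the shift would therefore destroy the upper bound on the numerator and hence the capacity ratio estimate. Since the shift is unneeded, simply delete it; after that correction your proof coincides with the paper's.
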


\begin{proof}
Fix $a\not = b \in S$, $c\in S$, $\bs z\in \mf S_{a,b}$ and $\bs x\in
\ms D_{\bs z}$. Since $H_N \le H_{\hat S}$, by the strong Markov
property,
\begin{equation*}
\mb P_{\bs x} \big[ H_{\hat S} = H_{\bs x^c_N} \big]  \;=\;
\mb E_{\bs x} \Big[ \mb P_{X_N(H_N)} 
\big[ H_{\hat S} = H_{\bs x^c_N}\big]\, \Big]\;.
\end{equation*}
By the proposition, the previous expression is equal to
\begin{equation*}
\sum_{\bs y\in \partial_\pm B_N} \mb P_{\bs x} \big[ X_N(H_N) = \bs y \big]
\mb P_{\bs y} \big[ H_{\hat S} = H_{\bs x^c_N}\big] \;+\; R_N(\bs x) \;,
\end{equation*}
where $\lim_{N\to\infty} \max_{\bs x\in\ms D_{\bs z}} |R_N(\bs x)|
=0$. 

Let $\bs x(t)$, $0\le t\le 1$, be a continuous path from $\bs x^a_N$
to $\bs x^b_N$ for which there exists $0<t_0<1$ such that $F(\bs
x(t))<H_i$ for all $t\not = t_0$ and $\bs x(t_0)=\bs z$. Assume that
this path crosses $B_N$ only at $\partial_\pm B_N$ and assume, without
loss of generality, that it crosses $\partial_- B_N$ before $\partial_+
B_N$. In this case, an argument similar to the one presented in the
proof of Lemma \ref{vs22} yields that
\begin{equation*}
\lim_{N\to\infty} \min_{\bs y\in \partial_+ B_N} \mb P_{\bs y} \big[ H_{\hat S} =
H_{\bs x^b_N}\big] \;=\; 1\;, \quad
\lim_{N\to\infty} \min_{\bs y\in \partial_- B_N} \mb P_{\bs y} \big[ H_{\hat S} =
H_{\bs x^a_N}\big] \;=\; 1\;.
\end{equation*}
In the proof of this assertion, instead of using an indicator function
to bound from above the capacity, as we did in the proof of Lemma
\ref{vs22}, we use the function constructed in Section
\ref{vsec02}. Note also that if the continuous path from $\bs x^a_N$
to $\bs x^b_N$ crosses first $\partial_+ B_N$ and then $\partial_-
B_N$, one has to interchange $a$ and $b$ in the previous displayed
formula.

Up to this point we showed that
\begin{equation*}
\mb P_{\bs x} \big[ H_{\hat S} = H_{\bs x^c_N} \big]  \;=\; 
\mb P_{\bs x} \big[ H_N = H^-_N \big] \mb 1\{c=a\} \;+\; 
\mb P_{\bs x} \big[ H_N = H^+_N \big] \mb 1\{c=b\} \;+\; R'_N(\bs x) \,,
\end{equation*}
where $R'_N(\bs x)$ is a new sequence with the same properties as the
previous one.  To complete the proof it remains to recall the
statement of the proposition.
\end{proof}

The proof of Proposition \ref{vs23} is based on the fact that in a
neighborhood of radius $N^{-1/2}$ around a saddle point $\bs z$ the
re-scaled chain $\sqrt{N} X_N(tN)$ behaves as a diffusion. More
precisely, let $g: \bb R^d \to\bb R$ be a three times continuously
differentiable function and let $G(\bs x) = g (\sqrt{N} (\bs x - \bs
z) \cdot \bs w^1, \dots, \sqrt{N} (\bs x - \bs z) \cdot \bs w^d)$. A
Taylor expansion of the potential $F$ around $\bs z$ gives that for
$\bs x\in B_N$,
\begin{equation}
\label{v49}
(L_N G)(\bs x) \;=\; \frac 1N \sum_{j=1}^d \Big\{ - \lambda_j 
\bs u_j \, (\partial_{\bs x_j} g) (\bs u) + (\partial^2_{\bs x_j}
g)(\bs u) \Big\} \;+\; \frac{R_N}N \;,
\end{equation}
where $\bs u_j = \sqrt{N} (\bs x - \bs z) \cdot \bs w^j$, and $R_N$ is
an error term satisfying 
\begin{equation*}
|R_N| \;\le\; C_0 \, N \varepsilon^2_N \, \Big\{ \frac{C_1(g)}{\sqrt{N}}
\;+\; \frac{C_2(g)}{N} \Big\} \;+\; C_0 \,
\frac{C_3(g)}{\sqrt{N}}\;.
\end{equation*}
In this formula, $C_1(g) = \max_{1\le j\le d}$ $\sup_{\bs u, \Vert \bs
  u \Vert \le a \sqrt{N} \varepsilon_N} |(\partial_{\bs x_j} g)(\bs
u)|$, with a similar definition for $C_2(g)$ and $C_3(g)$, replacing
first derivates by second and thirds.

Identity \eqref{v49} asserts that the process $(\sqrt{N} (X_N(tN) -
\bs z) \cdot \bs w^1, \dots, \sqrt{N} (X_N(tN) - \bs z) \cdot \bs
w^d)$ is close to a diffusion whose coordinates evolve
independently. The first coordinate has a drift towards $\pm \infty$
proportional to its distance to the origin, while the other coordinates
are Ornstein-Uhlenbeck processes.

\begin{lemma}
\label{vs24} 
There exists a finite constant $C_0$ such that for every $\bs z\in \mf
S_a$,
\begin{equation*}
\max_{\bs x\in \ms D_{\bs z}} \mb E_{\bs x} \big[ H_N \big] \;\le\;
C_0 N^{3/2} \varepsilon_N\;.
\end{equation*}
\end{lemma}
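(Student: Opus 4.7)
The plan is to exhibit a nonnegative Lyapunov function $V : \Xi_N \to \bb R_+$ with $(L_N V)(\bs x) \le -1/2$ for every $\bs x \in B_N$ and with $V(\bs x) \le C_0 N^{3/2}\varepsilon_N$ for $\bs x \in \ms D_{\bs z}$. Applying Dynkin's formula to the stopped process $V(X_N(t\wedge H_N))$ and letting $t\to\infty$ then gives $\mb E_{\bs x}[H_N] \le 2V(\bs x)$, which is the lemma.

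The construction leverages \eqref{v49}. For any test function of the form $G(\bs x) = g(\bs u_1(\bs x))$ with $\bs u_1(\bs x) = \sqrt{N}(\bs x - \bs z) \cdot \bs v$, the right-hand side of \eqref{v49} collapses (using $\lambda_1 = -\mu$) to
\begin{equation*}
(L_N G)(\bs x) \;=\; \frac{1}{N}\bigl[\, g''(\bs u_1) + \mu\, \bs u_1\, g'(\bs u_1)\,\bigr] \;+\; \frac{R_N}{N}\;,
\end{equation*}
and the bracketed operator $L_1 g := g'' + \mu u\, g'$ is the generator of the one-dimensional unstable diffusion $dU_t = \mu U_t\,dt + \sqrt{2}\,dB_t$. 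I would take $g = T$, the exit-time function from $[-2R, 2R]$ for this diffusion, with $R := a \sqrt{N}\varepsilon_N$. Explicitly solving $L_1 T = -1$, $T(\pm 2R) = 0$ yields
\begin{equation*}
T(u) \;=\; \int_{|u|}^{2R} e^{-\mu s^2/2} \int_0^s e^{\mu r^2/2}\, dr\, ds \;\ge\; 0\;,
\end{equation*}
and the classical asymptotic $\int_0^s e^{\mu r^2/2}\,dr \sim e^{\mu s^2/2}/(\mu s)$ as $s\to\infty$ gives $T(0) \le C_0 \mu^{-1}\log R$. Setting $V(\bs x) := N\, T(\bs u_1(\bs x))$, one has $V \ge 0$ and $(L_N V)(\bs x) = -1 + R_N$ on $B_N$. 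For $\bs x \in \ms D_{\bs z}$, the inclusion $D_{\bs z} \subset \bs z + \bb H$ together with the discretization gives $|\bs u_1(\bs x)| \le 1/\sqrt{N}$, so $V(\bs x) \le N T(0) \le C_0 N\log(\sqrt{N}\varepsilon_N)$. The hypothesis $N^{-2} \ll \varepsilon_N^4$ forces $\sqrt{N}\varepsilon_N \to \infty$, and then $\log x \le x$ for large $x$ converts this to $V(\bs x) \le C_0 N^{3/2}\varepsilon_N$.

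The main technical obstacle is controlling the remainder $R_N$. From the explicit formula $T'(u) = -e^{-\mu u^2/2}\int_0^u e^{\mu r^2/2}\,dr$ and the asymptotic above, one obtains the uniform bounds $\|T'\|_\infty \le C_0$ and $\|T''\|_\infty \le C_0$, the latter via the identity $T'' = -1 - \mu u T'$ together with the cancellation $\mu u T'(u) \to -1$ as $|u|\to\infty$; a further differentiation gives $\|T'''\|_\infty \le C_0\sqrt{N}\varepsilon_N$ on the relevant range. Plugging these into the remainder estimate stated below \eqref{v49} yields $|R_N| \le C_0\sqrt{N}\varepsilon_N^2 + C_0 \varepsilon_N$, and the hypothesis $\varepsilon_N^4 \ll N^{-3/2}$ forces $\sqrt{N}\varepsilon_N^2 \ll N^{-1/4} \to 0$. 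Hence $L_N V \le -1/2$ on $B_N$ for all $N$ large, closing the argument.
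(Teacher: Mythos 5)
Your proof is correct and follows essentially the same route as the paper: both exploit \eqref{v49} to reduce the problem to the one-dimensional ODE $g'' + \mu u g' = \text{const}$ in the coordinate $\bs u_1 = \sqrt{N}(\bs x - \bs z)\cdot \bs v$, and both apply Dynkin's formula with the resulting test function, bounding the remainder via uniform control on the first three derivatives. Your $T$ is just an affine shift of the paper's $g$ (explicitly $T = g(2R) - g$, using that $g$ is even), so the Lyapunov framing with $T(\pm 2R)=0$ is cosmetically different rather than a new argument; it does yield the slightly sharper intermediate bound $C_0\, N\log(\sqrt{N}\varepsilon_N)$, whereas the paper settles for the cruder linear-growth estimate $|g(x)|\le C_0|x|$ on $B_N$ and lands directly on $C_0\, N^{3/2}\varepsilon_N$.
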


\begin{proof}
Let $g:\bb R \to \bb R$ be given by $g(x) = \int_0^x \exp\{-\mu
y^2/2\} \int_0^y \exp\{\mu z^2/2\} dzdy$. It is clear that $g$ solves
the differential equation $\mu x g'(x) + g''(x) = 1$, $x\in \bb
R$. By Dynkin's formula, for every $t>0$, $\bs x\in \ms
D_{\bs z}$,
\begin{equation}
\label{v50}
\mb E_{\bs x} \Big[ G\big(X_N(t\wedge H_N)\big) - G(\bs x) 
\,-\, \int_0^{t\wedge H_N} (L_N G)(X_N(s)) \, ds \Big]\;=\; 0\;,
\end{equation}
where $G(\bs x) = g (N^{1/2} [\bs x - \bs z]\cdot \bs v)$. By \eqref{v49}
and since $|g'(x)|\le C_0$, $|g''(x)|\le C_0 |x|$, $|g'''(x)|\le C_0
x^2$, on $B_N$, $N(L_N G)(x) -1$ is absolutely bounded by $C_0 \sqrt{N}
\varepsilon_N^2$. Therefore,
\begin{equation*}
\big(1 - C_0 \sqrt{N} \varepsilon_N^2 \big) \mb E_{\bs x} \big[ t\wedge
H_N \big] \;\le\; N \,\mb E_{\bs x} \Big[ G\big(X_N(t\wedge H_N)\big)\Big] \;. 
\end{equation*}
Since $|g(x)| \le C_0 |x|$, $\sup_{\bs x \in B_N} |G(\bs x)| \le C_0
\sqrt{N} \varepsilon_N$. To complete the proof of the lemma, it
remains to observe that $\sqrt{N} \varepsilon_N^2 \to 0$ and to let
$t\uparrow\infty$.
\end{proof}

\begin{lemma}
\label{vs25} 
For every $\bs z\in \mf S_a$,
\begin{equation*}
\lim_{N\to\infty} \max_{\bs x\in \ms D_{\bs z}} \mb P_{\bs x} \big[ H_N < H^+_N \wedge
H^-_N \big] \;=\; 0\;.
\end{equation*}
\end{lemma}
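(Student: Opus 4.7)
The event $\{H_N < H_N^+\wedge H_N^-\}$ means the chain exits $B_N$ through its ``lateral'' face: since neighbors differ by $1/N$ and $\varepsilon_N\gg 1/N$, if $X_N(H_N)\notin\partial_-B_N\cup\partial_+B_N$ then necessarily $|(X_N(H_N)-\bs z)\cdot\bs w^j|>\varepsilon_N$ for some $j\in\{2,\dots,d\}$. My plan is to control the probability of this event by a union bound over $j$, combined with Markov's inequality applied to the test functions $\phi_j(\bs x):=[(\bs x-\bs z)\cdot\bs w^j]^2$ and Dynkin's formula stopped at $H_N$, using the hitting-time estimate of Lemma \ref{vs24} and the initial-size bound \eqref{v51}.

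Specialising the diffusion-approximation identity \eqref{v49} to $g(\bs u)=u_j^2$, so that $G(\bs x)=N\phi_j(\bs x)$, I obtain, uniformly on $B_N$,
\begin{equation*}
(L_N\phi_j)(\bs x)\;=\;-\,\frac{2\lambda_j}{N}\,\phi_j(\bs x)\;+\;\frac{2}{N^2}\;+\;R_N'(\bs x)\;,\qquad |R_N'(\bs x)|\;\le\;C_0\,\frac{\varepsilon_N^3}{N}\;,
\end{equation*}
the error estimate coming from \eqref{v49} with $C_1(g)=O(\sqrt N\,\varepsilon_N)$, $C_2(g)=2$, $C_3(g)=0$. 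Since $\lambda_j>0$ for $j\ge2$, dropping the nonpositive drift gives the uniform bound $L_N\phi_j\le 2/N^{2}+C_0\varepsilon_N^3/N$ on $B_N$. Dynkin's formula applied up to $H_N$ and Lemma \ref{vs24} then yield
\begin{equation*}
\mb E_{\bs x}\bigl[\phi_j(X_N(H_N))\bigr]\;\le\;\phi_j(\bs x)\;+\;C_0\,N^{3/2}\varepsilon_N\Bigl(\frac{1}{N^2}+\frac{\varepsilon_N^3}{N}\Bigr)\;\le\;C_0\Bigl\{\delta_N+\frac{\varepsilon_N}{\sqrt N}+\sqrt N\,\varepsilon_N^4\Bigr\}\;,
\end{equation*}
where the last inequality uses $\phi_j(\bs x)\le\|\bs x-\bs z\|^2\le(4/\lambda_2)\delta_N$ for $\bs x\in\ms D_{\bs z}$, by \eqref{v51}.

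Combining with Markov's inequality and the union bound,
\begin{equation*}
\mb P_{\bs x}\bigl[H_N<H_N^+\wedge H_N^-\bigr]\;\le\;\sum_{j=2}^d\frac{\mb E_{\bs x}[\phi_j(X_N(H_N))]}{\varepsilon_N^2}\;\le\;C_0\Bigl\{\frac{\delta_N}{\varepsilon_N^2}+\frac{1}{\varepsilon_N\sqrt N}+\sqrt N\,\varepsilon_N^2\Bigr\}\;,
\end{equation*}
and each of the three ratios vanishes in the limit: the first by the assumption $\varepsilon_N^2\gg\delta_N$; the second because $\varepsilon_N\gg N^{-1/2}$ (equivalent to $\varepsilon_N^4\gg N^{-2}$); and the third because $\varepsilon_N^4\ll N^{-3/2}$ yields $\sqrt N\,\varepsilon_N^2\ll N^{-1/4}$. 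The single point requiring care is the generator computation: one must verify the leading form of $L_N\phi_j$ via a second-order Taylor expansion of $F$ around $\bs z$, exploiting the eigenvalue identity $\bb M\bs w^j=\lambda_j\bs w^j$ to extract the $2/N^2$ diffusion constant and the (negligible but favorable) restoring drift $-2\lambda_j\phi_j/N$, while keeping the remainder uniformly of order $\varepsilon_N^3/N$ on $B_N$.
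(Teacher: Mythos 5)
Your argument is correct and is essentially the paper's own proof, up to the trivial rescaling $G = N\phi_j$: the paper applies Dynkin's formula directly to $G(\bs x)=g(N^{1/2}[\bs x-\bs z]\cdot\bs w^j)$ with $g(x)=x^2$, bounds $(L_NG)(\bs x)\le(2+R_N)/N$ via \eqref{v49} after dropping the nonpositive drift $-2\lambda_j u_j^2/N$, invokes Lemma \ref{vs24} and \eqref{v51} to control $\mb E_{\bs x}[G(X_N(H_N))]$, and concludes by Chebyshev. Your version makes explicit two things the paper leaves implicit (the union bound over $j=2,\dots,d$ to cover all lateral faces, and the role of the negative drift), but the decomposition, test functions, and error accounting are the same.
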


\begin{proof}
The proof is similar to the one of the previous lemma. Fix $2\le j\le
d$ and let $g:\bb R \to \bb R$ be given by $g(x) = x^2$.  By Dynkin's
formula, for every $t>0$, $\bs x\in \ms D_{\bs z}$, \eqref{v50} holds
with $G(\bs x) = g (N^{1/2} [\bs x - \bs z]\cdot \bs w^j)$. By
\eqref{v49}, $(L_N G)(\bs x) \le (2 + R_N)/N$ and $R_N/N \le C_0
\varepsilon_N^3$. Therefore, letting $t\uparrow\infty$, by Lemma
\ref{vs24} we get that
\begin{equation*}
\begin{split}
\mb E_{\bs x} \Big[ G\big(X_N(H_N)\big)\Big] \; &\le\; G(\bs x) \;+\;
\Big( \frac 2 N + C_0 \varepsilon_N^3\Big) \, \mb E_{\bs x} \big[ H_N \big] 
\\ &\le\; G(\bs x) \;+\; C_0 \Big(\frac 1N + C_0 \varepsilon_N^3\Big)
N^{3/2} \varepsilon_N \;. 
\end{split}
\end{equation*}

The event $\ms A_N = \{|\, (X_N(H_N)-\bs z)\cdot \bs w^j\, | > 
\varepsilon_N\}$ corresponds to the event that the process $X_N(t)$
reaches the boundary of $B_N$ by hitting the set $\{\bs x\in B_N :
[\bs x-\bs z]\cdot \bs w^j = \pm \varepsilon_N\}$. On this event the
function $G$ is equal to $N \varepsilon_N^2$. Since $G$ is
nonnegative,
\begin{equation*}
N \varepsilon_N^2 \, \mb P_{\bs x} [ \ms A_N ] \;\le\;
\mb E_{\bs x} \Big[ G\big(X_N(H_N)\big)\Big] \;. 
\end{equation*}
On the other hand, by Schwarz inequality and by \eqref{v51}, on the
set $\ms D_{\bs z}$, $G(\bs x)$ is absolutely bounded by $C_0 N \delta_N$.
Putting together the previous two estimates, we get that
\begin{equation*}
\max_{\bs x\in \ms D_{\bs z}} \mb P_{\bs x} [ \ms A_N ] \;\le\;
C_0 \Big( \frac{\delta_N}{\varepsilon_N^2} \;+\; 
\frac 1{\sqrt{N} \, \varepsilon_N} \;+\; \sqrt{N} \,
\varepsilon^2_N\Big)\;.  
\end{equation*}
This completes the proof of the lemma in view of the definition of the
sequence $\varepsilon_N$.
\end{proof}

\begin {proof}[Proof of Proposition \ref{vs23}]
The proof is similar to the one of the two previous lemmas. Let $g(x)
= \int_0^x \exp\{-\mu y^2/2\} dy$. By Dynkin's formula, for every
$t>0$, $\bs x\in \ms D_{\bs z}$, \eqref{v50} holds for $G(\bs x) = g
(N^{1/2} [\bs x - \bs z]\cdot \bs v)$. Since $g''(x) + \mu x g'(x)=0$,
and since the first three derivative of $g$ are uniformly bounded, by
Lemma \ref{vs24} and by \eqref{v49},
\begin{equation*}
\max_{\bs x\in \ms D_{\bs z}} \Big|\,
\mb E_{\bs x} \big[ G\big(X_N(H_N)\big) \big] - G(\bs x)
\,\Big| \;\le\; C_0 \,\varepsilon^3_N \, N \,\to\, 0 \;.
\end{equation*}
On $D_{\bs z}$, the function $G$ vanishes. On the other hand, on the
event $\{H_N = H^\pm_N\}$, $G(X_N(H_N)) = \pm (2\pi/\mu)^{1/2} +
o_N(1)$. Therefore, by Lemma \ref{vs25},
\begin{equation*}
\lim_{N\to\infty} \max_{\bs x\in \ms D_{\bs z}} \Big|\,
\mb P_{\bs x} \big[ H_N = H^+_N \big] \,-\, 
\mb P_{\bs x} \big[ H_N = H^-_N \big] \,\Big| \;=\;0\;.
\end{equation*}
This completes the proof of the proposition in view of Lemma
\ref{vs25}.  
\end{proof}

\begin{proof}[Proof of Theorem \ref{vs20}]
Fix $1\le i \le i_0$, $1\le j\le \ell_i$.  For each $a\in S$, let
$\{\bs x^N_a : N\ge 1\}$ be a sequence of points in $\ms E^a_N$.
Denote by $\hat R_N(a,b)$, $a \not = b\in S$, the jump rates of the
trace of $X_N(t)$ on the set $\{\bs x^a_N : a\in S\}$. By \cite[Lemma
6.8]{bl2},
\begin{equation*}
\begin{split}
\mu_N(\bs x^a_N)\, \hat R_N(a,b) \;=\; \frac 12 \Big\{ & \Cap_N (\{\bs x^a_N\},
\hat S\setminus \{\bs x^a_N\} ) \;+\; \Cap_N (\{\bs x^b_N\},
\hat S\setminus \{\bs x^b_N\} ) \\
&\quad \;-\; \Cap_N (\{\bs x^a_N , \bs x^b_N\},
\hat S\setminus \{\bs x^a_N , \bs x^b_N\} ) \Big\} \;,
\end{split}
\end{equation*}
where $\hat S = \{\bs x^c_N : c\in S\}$. By Remark \ref{vs21}, equation
\eqref{v41}, and the fact that $\bs c_1(a',b') = \bs c(a',b')$, for
$a\not = b\in S$,
\begin{equation*}
\lim_{N\to\infty} \frac{\hat R_N(a,b)}
{\sum_{c\in S, c\not = a} \hat R_N(a,c)} \;=\; p (a,b) \;,
\end{equation*}
where $p(a,b)$ has been introduced in \eqref{v42}.

On the other hand, by \cite[Proposition 6.1]{bl2}, for $a\not = b\in
S$, 
\begin{equation*}
\hat R_N(a,b) \;=\; \lambda_N(\bs x^a_N) \, \mb P_{\bs x^a_N}\big[H_{\bs x^b_N} =
H^+_{\hat S} \big] \;, 
\end{equation*}
and by the strong Markov property, 
\begin{equation*}
\mb P_{\bs x^a_N}[H_{\bs x^b_N}  = H^+_{\hat S} ] \;=\; \mb P_{\bs x^a_N}[  H_{\bs x^b_N}
= H_{\hat S \setminus \{\bs x^a_N\}} ] \, \mb P_{\bs x^a_N}[H^+_{\hat S} <
H^+_{\bs x^a_N}]\;. 
\end{equation*}
It follows from the last three displayed equations that
\begin{equation}
\label{v43}
\lim_{N\to\infty}  \mb P_{\bs x^a_N}[  H_{\bs x^b_N}
= H_{\hat S \setminus \{\bs x^a_N\}} ]  \;=\; p (a,b) \;,
\quad a \,\not =\,  b\in S \;.
\end{equation}

Since any continuous path from $\bs x^a_N$ to $\hat S \setminus \{\bs
x^a_N\}$ must cross $D_a$, $H_{\ms D_a} < H_{\hat S \setminus \{\bs
  x^a_N\}}$ $\mb P_{\bs x^a_N}$-almost surely. Hence, by the strong
Markov property,
\begin{equation*}
\mb P_{\bs x^a_N} [  H_{\bs x^b_N} = H_{\hat S \setminus \{\bs x^a_N\}} ]  \;=\;
\mb E_{\bs x^a_N} \Big[ \mb P_{X_N(H_{\ms D_a})} \big [ H_{\bs x^b_N} = 
H_{\hat S \setminus \{\bs x^a_N\}} \big ]\, \Big ]\;.
\end{equation*}
By Lemma \ref{vs22}, 
\begin{equation}
\label{v53}
\lim_{N\to\infty} \mb P_{\bs x^a_N} [ H_{\ms D_a} < H_{\ms B_a} ] \;=\; 0 \;.
\end{equation}
Therefore, 
\begin{equation*}
\begin{split}
& \lim_{N\to\infty} \mb P_{\bs x^a_N} [  H_{\bs x^b_N} = H_{\hat S \setminus
  \{\bs x^a_N\}} ]  \\
&\qquad =\;
\lim_{N\to\infty} \sum_{\bs z\in \mf S_a} \sum_{\bs y \in \ms D_{\bs z}} 
\mb P_{\bs x^a_N} \big[ H_{\ms D_a} = H_{\bs y} \big ]\, 
\mb P_{\bs y} \big [ H_{\bs x^b_N} = H_{\hat S \setminus \{\bs x^a_N\}} \big ]\;.
\end{split}
\end{equation*}
By the strong Markov property at time $H_{\hat S}$, the previous
expression is equal to
\begin{equation*}
\lim_{N\to\infty} \sum_{\bs z\in \mf S_a} \sum_{\bs y \in \ms D_{\bs
    z}} \sum_{c\in S}
\mb P_{\bs x^a_N} \big[ H_{\ms D_a} = H_{\bs y} \big ]\, 
\mb P_{\bs y} \big [ H_{\bs x^c_N} = H_{\hat S} \big ] \, 
\mb P_{\bs x^c_N} \big [ H_{\bs x^b_N} = H_{\hat S \setminus \{\bs
  x^a_N\}} \big ]\;.
\end{equation*}
By Corollary \ref{vs26}, this limit is equal to
\begin{equation*}
\begin{split}
& \frac 12 \lim_{N\to\infty} \mb P_{\bs x^a_N} \big [ H_{\bs x^b_N} = H_{\hat S \setminus \{\bs
  x^a_N\}} \big ] \, \mb P_{\bs x^a_N} \big[ H_{\ms D_a} = H_{\ms B_a}
\big ] \\
&\quad +\; \frac 12 \lim_{N\to\infty}  \sum_{\bs z\in \mf S_{a,b}} 
\mb P_{\bs x^a_N} \big[ H_{\ms D_a} = H_{\ms D_{\bs z}} \big ]\;.
\end{split}
\end{equation*}
By \eqref{v53}, we may replace in the first line $\mb P_{\bs x^a_N} [
H_{\ms D_a} = H_{\ms B_a}]$ by $1$.

In conclusion, in view of \eqref{v43}, we have shown that
\begin{equation*}
p(a,b)\;=\; \lim_{N\to\infty} \mb P_{\bs x^a_N} [  H_{\bs x^b_N} = H_{\hat S \setminus
  \{\bs x^a_N\}} ]  \;=\; \lim_{N\to\infty}  \sum_{\bs z\in \mf S_{a,b}} 
\mb P_{\bs x^a_N} \big[ H_{\ms D_a} = H_{\ms D_{\bs z}} \big ]\;.
\end{equation*}
This completes the proof of the theorem in the case where the set $\mf
S_{a,b}$ is a singleton. It is not difficult to modify this argument
to handle the case with more than one saddle point between two
wells. Indeed, since the proof does not depend on the behavior of the
function $F$ on $W^c_a$, we can modify $F$ on $W^c_a \setminus
[\cup_{\bs z\in \mf S_a} B_\epsilon(\bs z)]$, for some $\epsilon>0$,
creating new wells of height $H_i$, and turning each saddle point $\bs
z\in \mf S_a$ the unique saddle point between the well $W_a$ and new
well $W'_{\bs z}$.
\end{proof}

\end{document}